\newcommand{\inlineitem}[1][]{%
\ifnum\enit@type=\tw@
    {\descriptionlabel{#1}}
  \hspace{\labelsep}%
\else
  \ifnum\enit@type=\z@
       \refstepcounter{\@listctr}\fi
    \quad\@itemlabel\hspace{\labelsep}%
\fi}
\title{Complete Subvarieties of $\rm{M}_{g,n}$ and a Lifting Problem}
\subjclass[2020]{14H10 (primary),   14G17 (secondary)}
\keywords{moduli of curves}
\author{Daebeom Choi}
\address{Department of Mathematics\\
    University of Pennsylvania\\
    Philadelphia, PA 19104-6395}
\email{dbchoi@sas.upenn.edu}
\date{\today}
\theoremstyle{definition}
\newtheorem{thm}{Theorem}[section]
\newtheorem{Pthm}{Prior Results}[section]
\newtheorem{lem}[thm]{Lemma}
\newtheorem{prob}[thm]{Problem}
\newtheorem{prop}[thm]{Proposition}
\newtheorem{Kthm}[thm]{Kodaira-Parshin}
\newtheorem{defn}[thm]{Definition}
\newtheorem{conj}[thm]{Conjecture}
\newtheorem*{conj*}{The Lifting Conjectures}
\newtheorem{cor}[thm]{Corollary}
\newtheorem{rmk}[thm]{Remark}
\begin{document}
\maketitle

\begin{abstract}
    Finding the maximal dimension of complete subvarieties of the moduli space of smooth $n$-pointed curves of genus $g$ is a long-standing open problem. Here we show that for $g\ge 3\cdot 2^{d-1}$, if the characteristic of the base field is greater than $2$, then $\rm{M}_g$ contains a complete subvariety of dimension $d$. Furthermore, in positive characteristic, we construct a complete surface in $\rm{M}_{g,n}$ for $g\ge 3$ and $n\ge 1$, which contain a general point. These results follow from the proofs of the lifting conjectures, introduced here. In particular, we translate the existence of complete subvarieties to properties of line bundles on $\rm{M}_{g,n}$. Our method reframes Zaal's approach, with increased efficiency via Keel's results on semi-ample line bundles in positive characteristic. This method demonstrates the difference in the geometry of moduli spaces between characteristic $0$ and characteristic $p$. 
\end{abstract}

\section{Introduction}
The Deligne-Mumford stack $\mathcal{M}_{g,n}$ parameterizes families of smooth curves of genus $g$ with $n$ ordered, distinct marked points over an algebraically closed field $K$. If $2g-2+n>0$,  this is (coarsely) represented by the moduli space ${\rm{M}}_{g,n}$. As has been established, the very nature of the moduli space of curves as a variety has an impact on what we know about families of smooth genus $g$ curves (e.g.~\cite{EH87, Far00, FJP20, HM82}). Very basic questions remain open about the geometry of the moduli spaces. For instance, while $\rm{M}_2$ is an affine $3$-fold, and more generally $\rm{M}_{g,n}$ is a quasi-projective variety of dimension $3g-3+n$, we don't know ``how close" $\rm{M}_{g,n}$ is to being affine or projective.

 One way to evaluate this question is to determine the dimension $r(g,n)$ of the largest complete subvariety of $\operatorname{M}_{g,n}$. For convenience, let $r(g):=r(g,0)$. Finding $r(g,n)$ is a problem going back at least to the 1974 paper by Oort \cite{Oor74}. A number of results are known, as outlined in \cite{Har84, Oor95, FL99} (and briefly in \cref{Pthm:results}). Diaz's upper bound $r(g)\le g-2$ in \cite{Di84} and Zaal's lower bound $r(g)>\log_2 g -2$ in \cite{Za05} are among them. Diaz's result has motivated the conjecture that $r(g)=g-2$, although a determination of the exact values of $r(g,n)$ (or even $r(g)$) remains an open problem.

Our first result improves Zaal's bound for positive characteristic (see \cref{cor:conjpos}):

\begin{thm}\label{thm:mainthm1} The moduli space $\rm{M}_g$ of smooth curves of genus $g$ contains a complete subvariety of dimension $d$, for $g\ge 3\cdot 2^{d-1}$ if $\text{char }K>2$, and for $g\ge \frac{5\cdot 3^{d-1}+1}{2}$ if $\text{char }K=2$.
\end{thm}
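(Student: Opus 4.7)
The plan is induction on $d$. The base case $d=1$ (requiring $g \geq 3$) reduces to the classical existence of a complete curve in $\mathrm{M}_g$ for $g \geq 3$, which is available in any characteristic. The substance of the theorem is the inductive step: given a complete subvariety $V \subset \mathrm{M}_g$ of dimension $d$, produce one of dimension $d+1$ in $\mathrm{M}_{g'}$, where $g' = 2g$ in the main case $\mathrm{char}\,K > 2$ (matching the recursion $3 \cdot 2^{d-1} \mapsto 3 \cdot 2^d$ underlying the hypothesis).

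The natural way to gain one dimension while doubling the genus is a cover construction. Starting from the universal curve $\mathcal{X} \to V$ of dimension $d+1$, I would form a family of double covers of each fiber: a double cover of a smooth genus-$g$ curve branched at a degree-$2$ divisor has genus $2g$ by Riemann--Hurwitz. After parametrizing the branch data and the attendant square-root-of-line-bundle data, this should yield a $(d+1)$-dimensional complete family of genus-$2g$ curves, whose moduli map to $\mathrm{M}_{2g}$ has image of dimension $d+1$.

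Two obstacles arise. First, the parameter space for the branch (and square-root-of-line-bundle) data must be globally complete over $V$. Second, one must verify that the resulting family varies with full rank in moduli. The ``lifting conjectures'' introduced in the paper translate both obstacles into statements about line bundles on $\overline{\mathrm{M}}_{g,n}$: roughly, the global existence of the cover family corresponds to semi-ampleness of a tautological line bundle satisfying prescribed conditions on its exceptional locus. Here Keel's theorem is decisive in positive characteristic: a nef line bundle whose restriction to its exceptional locus is semi-ample is itself semi-ample, furnishing the morphism whose generic fiber provides the desired complete subvariety. In characteristic zero, the corresponding semi-ampleness is unknown, which is precisely why the positive-characteristic hypothesis is imposed and why this approach outperforms Zaal's.

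I expect the principal difficulty to be establishing semi-ampleness of the specific line bundles on $\overline{\mathrm{M}}_{g,n}$ entering the lifting construction; the remaining pieces (the inductive setup, the Riemann--Hurwitz bookkeeping, and the verification of generic moduli variation) are comparatively routine. The characteristic-$2$ case, where naive double covers are unavailable as separable covers, is handled by the analogous construction adapted to that characteristic, producing the weaker genus growth $g \mapsto 3g - 1$ reflected in the hypothesis $g \geq (5 \cdot 3^{d-1} + 1)/2$.
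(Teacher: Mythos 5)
Your proposal follows essentially the same route as the paper: induction on $d$ with base case the classical complete curve in $\mathrm{M}_g$ for $g\ge 3$ (codimension~$2$ of the Satake boundary), inductive step via Kodaira--Parshin double covers (triple covers in characteristic $2$), and the branch data supplied by lifting disjoint sections, which in positive characteristic comes from Keel's semi-ampleness of $\psi_{n+1}=\Omega_{\pi}(\overline{S}_1+\cdots+\overline{S}_n)$. Two points to tighten. First, a genuine bookkeeping gap: the recursion $g\mapsto 2g$ with a degree-$2$ branch divisor only reaches even target genera, so starting from all $g\ge 3\cdot 2^{d-1}$ it produces $(d+1)$-dimensional subvarieties only in $\mathrm{M}_{g'}$ for even $g'\ge 3\cdot 2^{d}$; to get every $g'\ge 3\cdot 2^{d}$ you must allow branch divisors of degree $2n$ (giving genus $2g+n-1$ for each $n\ge 1$), which requires applying the lifting statement iteratively to obtain $2n$ pairwise disjoint sections after finite base change --- this is exactly how the paper's proof of Theorem~\ref{thm:mapp}(2) proceeds. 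Second, a mischaracterization of where semi-ampleness enters: it is not the ``global existence of the cover family'' that it furnishes, nor is the complete subvariety a fiber of the associated morphism; rather, the semi-ample line bundle with trivial restriction to the universal sections yields a morphism contracting the sections to finitely many points, a hyperplane section avoiding those points gives a divisor disjoint from the marked sections, and that divisor becomes the new section(s) after finite base change (Proposition~\ref{prop:equiv}); the complete subvariety is then the image in $\mathrm{M}_{g'}$ of the resulting Kodaira--Parshin family. With those corrections your outline matches the paper's argument.
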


One may also ask for the maximal integer $\Tilde{r}(g,n)$ for which a general point in $\rm{M}_{g,n}$ is contained in a $\Tilde{r}(g,n)$-dimensional complete subvariety (setting $\Tilde{r}(g):=\Tilde{r}(g,0)$). This is perhaps a more interesting problem than the former (see \cite[p. 55]{HM98} and \cite[Section 11]{FL99}). Our second theorem is the first non-trivial result about $\Tilde{r}(g,n)$ for $n\ge 3$ (see \cref{cor:conj2pos}).

\begin{thm}\label{thm:mainthm2} If $\text{char }K\ne 0$, $g\ge 3$, and $n\ge 1$, then any point of $\rm{M}_{g,n}$ is contained in a $2$-dimensional complete subvariety. 
\end{thm}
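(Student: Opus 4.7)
My plan is to split the proof by $n$. For $n = 1$, a direct construction suffices: given $[C, p] \in \mathrm{M}_{g,1}$, take a complete curve $B \subset \mathrm{M}_g$ through $[C]$; such a curve exists for $g \ge 3$ in positive characteristic by a Kodaira-Parshin type construction (recalled in the paper's prior results). Pulling back the universal curve over $B$ and resolving singularities yields a smooth projective surface $\mathcal{X}$ together with a natural morphism $\phi \colon \mathcal{X} \to \mathrm{M}_{g,1}$ sending $(b, x) \mapsto [C_b, x]$. For $g \ge 3$, generic curves have trivial automorphism groups, so $\phi$ is generically finite, and its image is a complete $2$-dimensional subvariety of $\mathrm{M}_{g,1}$ containing $[C, p]$.

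\textbf{Approach for $n \ge 2$.} The naive extension of the $n = 1$ argument---adjoining fixed marked sections $\sigma_2, \ldots, \sigma_n$ on $\mathcal{X}$ and mapping $(b, x) \mapsto [C_b, x, \sigma_2(b), \ldots, \sigma_n(b)]$---fails because the moving point inevitably collides with the $\sigma_i$, and the closure of the image escapes into the boundary of $\overline{\mathrm{M}}_{g,n}$ rather than remaining in $\mathrm{M}_{g,n}$. To circumvent this, I would invoke the lifting conjectures of the paper: they reframe the existence of complete $2$-dimensional subvarieties through a prescribed point as semi-ampleness of a suitable line bundle $L$ on $\overline{\mathrm{M}}_{g,n}$. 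In positive characteristic, Keel's theorem then promotes semi-ampleness of $L$ on the boundary (where the statement can be set up inductively using lower-dimensional moduli spaces of pointed curves) to semi-ampleness on all of $\overline{\mathrm{M}}_{g,n}$; the fibers of the resulting Iitaka fibration, intersected with $\mathrm{M}_{g,n}$, provide the desired $2$-dimensional complete subvarieties.

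\textbf{Main obstacle.} The crux is constructing the line bundle $L$ so that, simultaneously, (i) its restriction to the boundary is semi-ample---which is where the lifting conjectures do their inductive work on the boundary strata of $\overline{\mathrm{M}}_{g,n}$, (ii) Keel's hypotheses apply in characteristic $p$, and (iii) the Iitaka fibration has fibers of dimension exactly $2$ meeting each point of $\mathrm{M}_{g,n}$ non-trivially in the interior. Item (iii) in particular demands a careful estimate $\kappa(L) = \dim \mathrm{M}_{g,n} - 2$, together with a verification that the base locus does not absorb any interior point entirely into the boundary. The assumption $\text{char } K \ne 0$ is essential because Keel's semi-ampleness criterion has no characteristic-zero analog; this is precisely why \cref{thm:mainthm2} is restricted to positive characteristic and reflects the difference in geometry between char $0$ and char $p$ advertised in the abstract.
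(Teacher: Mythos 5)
Your $n=1$ step is essentially the paper's: a complete curve $B\subset\rm{M}_g$ through $[C]$ exists for $g\ge 3$ because the Satake boundary has codimension $2$ (this is \cref{Pthm:results}(1), valid in any characteristic — it is not a Kodaira--Parshin construction), and the universal curve over a suitable finite cover of $B$ maps onto $\pi^{-1}(B)\subset \rm{M}_{g,1}$, a complete surface containing every point over $[C]$. This is exactly the properness of $\mathcal{M}_{g,1}\to\mathcal{M}_{g}$ used in \cref{thm:mapp2}.

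The gap is in your mechanism for $n\ge 2$. The line bundle that the lifting machinery actually uses is $\psi_{n+1}$ on $\overline{\mathcal{U}}_{g,n}=\overline{\mathcal{M}}_{g,n+1}$, and by Keel's theorem it is nef and \emph{big}, with exceptional locus in the boundary; hence the associated semiample morphism is birational, there is no Iitaka fibration with $2$-dimensional fibers, and your requirement $\kappa(L)=\dim\rm{M}_{g,n}-2$ in item (iii) cannot hold and is not what is needed. The complete surfaces are not fibers of anything: they are produced by \emph{iterated lifting}. Given a complete surface $S\subset \rm{M}_{g,k}$ with $\pi(x)\in S$ for a prescribed $x\in\rm{M}_{g,k+1}$, one pulls back the universal curve $C\to S$; semiampleness of $\psi_{k+1}$ plus the triviality of $\bar s_i^\ast\psi_{k+1}$ (\cref{prop:linecal}) gives a morphism $h$ to projective space contracting the $k$ marked sections to finitely many points, and the fact that the exceptional locus lies in $\partial\mathcal{M}_{g,k}$ guarantees $h(x)$ is not one of those points; a hyperplane through $h(x)$ avoiding them pulls back to a multisection $T\subset C$ through $x$ and disjoint from the marked sections, i.e.\ a new section after the finite base change $T\to S$ (\cref{prop:equiv2}, \cref{thm:conj2prf}). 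Since $\mathcal{M}_{g,k+1}\to\mathcal{M}_{g,k}$ is affine, the image of $T$ in $\rm{M}_{g,k+1}$ is again a complete surface, now through $x$, and one iterates from $k=1$ up to $n$. Your proposal names the right ingredients (Keel, lifting, positive characteristic) but omits the step that actually produces a surface through a prescribed point — the strong lifting statement \cref{conj:mconj2} and the exceptional-locus control behind it — and replaces it with an Iitaka-fibration picture that does not apply to a big line bundle.
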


To motivate our methods, following  \cite{FL99, Har84, Har85, HM98, Oor74, Oor95, Za05}, we briefly review  prior results about $\Tilde{r}(g,n)$ and $r(g,n)$ and compare them to our findings.

\begin{Pthm}\label{Pthm:results}
\begin{enumerate}
    \item $\Tilde{r}(g)\ge 1$ if $g\ge 3$. \inlineitem  $r(g)\le g-2$.
    \item $r(g)\ge d$ holds for $g\ge 2^{d+1}$ if $\text{char }K\ne 2$, $g\ge \frac{3^{d+1}+1}{2}$ if $\text{char }K=2$. Furthermore, if $\text{char }K>2$, then $r(6)\ge 2$.
\end{enumerate}
\end{Pthm} 

Part (1) follows from the fact that the boundary of the Satake compactification of $\rm{M}_g$ has codimension $2$ (see \cite{Oor95}). Indeed, in this case, any point in $\rm{M}_g$ is contained in a complete subcurve. Diaz proved (2) in characteristic $0$ \cite{Di84} and Looijenga proved it in general characteristic  \cite{Lo95}. Motivated by this theorem, it is conjectured that $r(g)=g-2$. However, as stated above, this conjecture is open. For instance, it is still unknown whether $r(4)=2$. Zaal proved (3) in his thesis \cite{Za05}, and the second assertion is also published in \cite{Za99}. While he did not prove the characteristic $2$ part of the first assertion of (3), using the method from \cite[Chapter 2]{Za05} with triple covers, instead of double covers, the statement follows (cf. \cite[Fact 1.12]{Za05}).

The Kodaira-Parshin construction (KPC for short) \cite{Par68, Ma85} is behind almost every assemblage of complete subvarieties of $\rm{M}_g$, including Zaal's work above. KPC relates the problem of finding complete subvarieties of $\rm{M}_g$ to the same problem on $\rm{M}_{g,n}$. 

\begin{Kthm}[Heuristic Version] \label{Kthm:kodairashort}
    Given a family of $n$-pointed smooth curves of genus $g$, by taking degree $d$ covers of each fiber, ramified exactly at the marked points, we obtain a family of smooth curves of genus $g'=g'(g,n,d)$.
\end{Kthm}

For a rigorous formulation of a variant of the KPC that used in this paper, see \cref{Kthm:kodaira}. To apply the KPC, one needs to start with a complete family of pointed curves $S\to \mathcal{M}_{g,n}$, whose existence is already a very hard problem. To deal with this, in the proof of the first assertion of \cref{Pthm:results} (3), in \cite[Lemma 2.2]{Za05}, Zaal used an efficient form of the KPC, which results in an iterative production of complete subvarieties of $\rm{M}_{g,2}$, giving the best known lower bound for $r(g)$. We have found this method difficult to improve, since we need to fix a base curve $X_0$ in \cite[Lemma 2.2]{Za05}.

In the proof of the second assertion of \cref{Pthm:results} (3),  \cite{Za99}, Zaal used a new method of constructing complete subvarieties of $\rm{M}_{g,n}$ based on Keel's semiampleness criteria in positive characteristic \cite{Ke99}. Applying the KPC with this proves the theorem.

Following \cite[Section 11]{FL99}, we briefly summarize what is known in case $n\ge 1$:

\begin{Pthm}\label{Pthm:Results2}
    \begin{enumerate}
        \item $r(g,1)=r(g)+1$, $r(g,n+1)\le r(g,n)$ (and same for $\Tilde{r}$);
        \item $r(g,n)\ge 1$ for $g\ge 2$; \inlineitem $r(g,2)\ge 2$, for $\text{char }K\ne 0$, $g\ge 3$; and
        \inlineitem $\Tilde{r}(g,2)\ge 1$.
    \end{enumerate}
\end{Pthm}

Since $\mathcal{M}_{g,n+1}\to \mathcal{M}_{g,n}$ is an affine morphism if $n\ge 1$ and a projective morphism if $n=0$, of relative dimension $1$, assertion (1) is straightforward. To prove (2), one can use a construction involving elliptic curves (for example, see \cite[Theorem 2.3]{Za05}). Similarly, (4) follows from a construction using Jacobian: see \cite[Section 11]{FL99}. (3) is proved in \cite{Za99}, as a step in the proof of part $(3)$ of \cref{Pthm:results}. Note that \cite[Lemma 2.2, Theorem 2.3]{Za05} also provides a way to construct many complete subvarieties of $\rm{M}_{g,n}$.

Now we will explain our approach to this problem. Based on the previous discussion, our main theorems can be reduced to the problem of finding many complete subvarieties of $\rm{M}_{g,n}$. Note that \cref{Pthm:Results2} (1) is proven by `descending' complete subvarieties of $\rm{M}_{g,n+1}$ to $\rm{M}_{g,n}$. Instead, we do exactly the opposite, by lifting (complete) subvarieties of $\rm{M}_{g,n}$ to $\rm{M}_{g,n+1}$. This approach allows us to systematically study (complete) subvarieties of $\rm{M}_{g,n}$ and construct a large number of such subvarieties that are sufficient to prove \cref{thm:mainthm2}. Additionally, it enables us to more efficiently apply the KPC and ultimately produce the improved lower bound in positive characteristic as stated in \cref{thm:mainthm1}. A rigorous statement of the lifting problem is formulated in the following conjectures. Such a lifting problem was implicitly considered in \cite{Za99} using \cite{Ke99}. From now on, $\pi:\mathcal{M}_{g,n+1}\to \mathcal{M}_{g,n}$ will always denote the map dropping $(n+1)$st point.

\begin{conj*}\label{conj:mconjshort}

    Assume $2g-2+n>0$ and $n\ge 1$. Let $S$ be a variety, $u:S\to \mathcal{M}_{g,n}$ a morphism, and $x\in \mathcal{M}_{g,n+1}$ be a closed point such that $\pi(x)\in u(S)$.
\begin{enumerate}
    \item{Weak Version, \cref{conj:mconj} :} There exists a proper surjective morphism $p: S'\to S$, and a morphism $u':S'\to \mathcal{M}_{g,n+1}$ such that the following diagram ($2$-)commutes.
    \[\begin{tikzcd}
    S' \arrow[r, "u'"]\arrow[d, "p"]& \mathcal{M}_{g,n+1}\arrow[d, "\pi"]\\
    S  \arrow[r, "u"]& \mathcal{M}_{g,n}
\end{tikzcd}\]
    \item {Strong Version, \cref{conj:mconj2} :} We can find such $S',p, u'$ that satisfies $x\in u'(S')$.  
\end{enumerate}
\end{conj*}

 These conjectures are discussed in more detail in \cref{sec:KP}.  In particular, they imply the existence of many complete subvarieties in $\rm{M}_{g,n}$. Our main theorem is the following.

\begin{thm}\label{thm:themain}
    The Lifting Conjectures hold in positive characteristic. The Lifting Conjectures (1) fail in characteristic $0$ when $g\ge 3$ and $n>2g+2$.
\end{thm}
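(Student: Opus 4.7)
The plan is to recast the Lifting Conjectures as a semi-ampleness statement for a tautological line bundle $L$ on a compactification of $\mathcal{M}_{g,n+1}$, apply Keel's criterion in positive characteristic, and exhibit an Arakelov-type obstruction in characteristic zero when $n>2g+2$.

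For the positive characteristic direction, given $u:S\to\mathcal{M}_{g,n}$, I would first reduce to the case where $S\subset\mathcal{M}_{g,n}$ is closed (by normalizing the image of $u$). Pulling back the universal curve gives $\mathcal{Y}_S\to S$ with sections $\sigma_1,\ldots,\sigma_n$, and a weak lift is equivalent to finding a proper subvariety $W\subset\mathcal{Y}_S$ surjecting onto $S$ and disjoint from each $\sigma_i(S)$. Introduce a nef line bundle $L$ on $\overline{\mathcal{M}}_{g,n+1}$ designed to be $\bar\pi$-ample except along the forbidden boundary divisors $D_{i,n+1}$ (where $p_i$ and $p_{n+1}$ come together), a natural candidate being $L=\bar\pi^*A+\epsilon\,\omega_{\bar\pi}$ for $A$ ample on $\overline{\mathcal{M}}_{g,n}$ and small rational $\epsilon>0$. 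Identify the null locus $\mathbb{E}(L)$ as a union of strata in $\partial\overline{\mathcal{M}}_{g,n+1}$; each such stratum is, up to finite cover, a product of moduli spaces $\overline{\mathcal{M}}_{g',n'}$ with smaller invariants, so by induction on the pair $(g,n)$ the restriction $L|_{\mathbb{E}(L)}$ is semi-ample. Keel's theorem then upgrades nefness to semi-ampleness of $L$ in positive characteristic, producing a morphism $\phi_L$ that contracts the forbidden boundary and is generically an embedding on the interior. Pulling back along $u$ yields the required $(p,u')$ for the weak version; for the strong version, since $x$ lies in the open $\mathcal{M}_{g,n+1}$ on which $\phi_L$ is an embedding, the fiber of $\phi_L$ through $x$ can be included in $S'$ via base change along $u$.

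For the characteristic zero failure with $g\ge 3$ and $n>2g+2$, take $S\subset\mathcal{M}_{g,n}$ a complete curve produced via the Kodaira--Parshin machinery (available since $\tilde r(g)\ge 1$ for $g\ge 3$) so that $\mathcal{Y}_S\to S$ is non-isotrivial with $n$ disjoint sections. By the Arakelov inequality and Caporaso-style bounds on disjoint sections of a non-isotrivial family of genus $g\ge 2$ over a complete base in characteristic zero, the total number of disjoint sections after any proper surjective base change $S'\to S$ is controlled by a function of $g$, the base, and the degree of the cover. The hypothesis $n>2g+2$ is chosen so that adding one more disjoint section---which a lift would force after pullback---violates the transformed bound, obstructing even the weak conjecture.

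The hardest step will be verifying that $L|_{\mathbb{E}(L)}$ is semi-ample by induction: the combinatorics of $\partial\overline{\mathcal{M}}_{g,n+1}$ and the restriction of $L$ to deep boundary strata may require a strengthened inductive hypothesis accommodating product families of moduli and their tautological classes. A secondary challenge is tracking how the Arakelov--Caporaso bound behaves under arbitrary proper surjective covers in characteristic zero; ultimately the obstruction rests on the absence of Frobenius-style covers there, which is precisely why the semi-ampleness argument via Keel succeeds in positive characteristic and no analogue holds in characteristic zero.
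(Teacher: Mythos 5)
Your positive-characteristic half is in the right spirit (find a nef line bundle on $\overline{\mathcal{M}}_{g,n+1}$ that is semiample by Keel and trivial on the images of the universal sections, then pull back a hyperplane section missing them), but your candidate $L=\bar\pi^\ast A+\epsilon\,\omega_{\bar\pi}$ does not do the job: the forbidden divisors $D_{i,n+1}$ are exactly the images $\overline{S}_i$ of the universal sections, and $\bar s_i^\ast(\bar\pi^\ast A+\epsilon\,\omega_{\bar\pi})=A+\epsilon\psi_i$ is ample, so $\phi_L$ does \emph{not} contract them and a generic hyperplane section will meet the sections. The correct choice is $\psi_{n+1}=\omega_{\bar\pi}(\overline{S}_1+\cdots+\overline{S}_n)$, which restricts trivially to each $\overline{S}_i$; its nefness, bigness, control of the exceptional locus (contained in the boundary), and semiampleness in characteristic $p$ are exactly Keel's Theorem 0.4, which you can cite rather than re-running the boundary-stratum induction. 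The paper also has to work to pass from proper $S$ to arbitrary $S$ (a finite flat level cover plus Nagata compactification), and the strong version needs the statement that the exceptional locus lies in $\partial\mathcal{M}_{g,n}$, so that a connected $\phi_L$-fiber cannot contain both an interior point $x$ and a point of some $\overline{S}_i$; ``$\phi_L$ is an embedding on the interior'' is not quite the right assertion and is not what is used.

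The characteristic-zero half has a genuine gap. There is no Arakelov-- or Caporaso--type bound of the kind you invoke: disjoint sections give independent classes in $\operatorname{NS}$ of the total space, but the Picard number of $C\times_S S'$ grows without bound as $S'$ ranges over finite covers, so no count of the form ``at most $2g+2$ disjoint sections after any proper surjective base change'' is available --- and any purely numerical obstruction of this shape, being insensitive to the characteristic, would contradict the positive-characteristic result you just proved. The actual role of $n>2g+2$ is entirely different: it guarantees that $\mathcal{M}_{g,n}$ is a fine moduli \emph{scheme} (a smooth variety), so the identity map $\mathrm{id}:\mathcal{M}_{g,n}\to\mathcal{M}_{g,n}$ is a legitimate test family; a new section for it would produce a DNS line bundle on $\mathcal{U}_{g,n}$, which is automatically a \emph{universal} one. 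The paper then rules out all universal DNS line bundles for $n\ge 2$: they must be multiples of $\psi_{n+1}$ (by computing restrictions to the universal sections against the Arbarello--Cornalba presentation of $\operatorname{Pic}$), and $\psi_{n+1}$ fails to be DNS on a suitable isotrivial-in-$\mathcal{M}_g$ but nonconstant-in-$\mathcal{M}_{g,n}$ family, because $\pi_2^\ast\Omega^1_{C_0}\otimes\mathcal{O}(\Delta)$ is non-torsion on the second-order neighborhood of the diagonal in $C_0\times C_0$ in characteristic zero (Keel's Lemma 3.4) --- this infinitesimal statement is the genuinely characteristic-zero ingredient your sketch is missing.
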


We state one important corollary of our main theorem.

\begin{cor}\label{cor:reduce}
    If $\text{char }K>0$, then $r(g,n)=r(g)+1$ and $\Tilde{r}(g,n)=\Tilde{r}(g)+1$ for $n\ge 1$.  
\end{cor}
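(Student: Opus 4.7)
Both equalities split into upper and lower bounds. The upper bounds $r(g,n) \le r(g) + 1$ and $\tilde{r}(g,n) \le \tilde{r}(g) + 1$ follow immediately from the monotonicity $r(g, n+1) \le r(g, n)$ together with the base case $r(g, 1) = r(g) + 1$ in \cref{Pthm:Results2} (with the analogous assertions for $\tilde{r}$). The content is therefore the matching lower bounds, which I propose to extract by iterating the Lifting Conjectures, now available in positive characteristic by \cref{thm:themain}.

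For $r(g, n) \ge r(g) + 1$, I would start with a complete subvariety $Y_1 \subset \rm{M}_{g,1}$ of dimension $r(g) + 1$ (furnished by the base case) and inductively build complete $Y_k \subset \rm{M}_{g,k}$ of dimension at least $r(g) + 1$ for $k = 2, \dots, n$. At the $k$-th step I feed the inclusion $u : Y_k \hookrightarrow \mathcal{M}_{g,k}$ into the Weak Lifting Conjecture, obtaining $p : S' \to Y_k$ proper surjective and $u' : S' \to \mathcal{M}_{g,k+1}$ with $\pi \circ u' = u \circ p$. Setting $Y_{k+1} := u'(S')$, completeness is preserved because $S'$ is proper (a proper cover of the proper $Y_k$), so its image is closed and proper in $\rm{M}_{g,k+1}$. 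Dimension does not drop either: $\pi(Y_{k+1}) = u(p(S')) = Y_k$ by surjectivity of $p$, forcing $\dim Y_{k+1} \ge \dim Y_k \ge r(g) + 1$.

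For $\tilde{r}(g, n) \ge \tilde{r}(g) + 1$, the structure is the same but I must carry a distinguished general point along. Given a general $x \in \rm{M}_{g,n}$, let $x_k$ denote its image in $\rm{M}_{g,k}$; since $x$ is general, so is $x_1$, and the $\tilde{r}$-version of the base case in \cref{Pthm:Results2} yields a complete $Y_1 \ni x_1$ of dimension $\tilde{r}(g) + 1$. In the inductive step I switch from the weak to the Strong Lifting Conjecture, feeding it the distinguished point $x_{k+1} \in \mathcal{M}_{g,k+1}$ (which projects to $x_k \in Y_k$); the strong version guarantees $x_{k+1} \in u'(S') =: Y_{k+1}$, while completeness and the dimension bound are maintained exactly as in the $r$-case.

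I do not anticipate serious obstacles: the argument is essentially mechanical once \cref{thm:themain} is in hand. The one observation requiring care, both for $r$ and for $\tilde{r}$, is that lifting cannot decrease dimension—a consequence of the commuting square together with the surjectivity of $p$. This is what converts the purely existential Lifting Conjectures into the quantitative identity asserted in the corollary.
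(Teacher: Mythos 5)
Your proposal is correct and takes essentially the same route as the paper: the upper bounds come from \cref{Pthm:Results2}(1), and the lower bounds come from iterating the weak (resp.\ strong) Lifting Conjecture, now a theorem in positive characteristic by \cref{thm:themain}, exactly as in \cref{thm:mapp} and \cref{thm:mapp2}, whose proofs the paper leaves as one-line remarks; your observations that properness of $p$ preserves completeness and that surjectivity of $p$ in the commuting square prevents the dimension from dropping are precisely the content being suppressed there. The only point you elide (as does the paper) is that a complete subvariety of the coarse space $\rm{M}_{g,k}$ must first be replaced by a finite cover carrying an actual family, i.e.\ a morphism from a variety to the stack $\mathcal{M}_{g,k}$, before the Lifting Conjecture applies; this is standard via level covers as in \cref{lem:covmod}.
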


\cref{cor:reduce} follows from \cref{thm:themain} (see \cref{thm:mapp} and \cref{thm:mapp2}), and implies \cref{thm:mainthm1} and \cref{thm:mainthm2}. Note that \cref{cor:reduce} reduces the calculation of $r(g,n)$ and $\Tilde{r}(g,n)$ to that of $r(g)$ and $\Tilde{r}(g)$ in positive characteristic.

We will now introduce our method. To prove \cref{thm:themain}, we use what we call the method of universal DNS line bundles, cf. \cref{defn:univdns}. This method is a refinement of the approach used in \cite{Za99}, where Zaal employed a line bundle and a morphism associated with it to produce a $2$-dimensional complete subvariety of $\rm{M}_{g,2}$. In \cref{defn:dnsline}, we introduce a generalization of this, which we call a line bundle that ``detects a new section" (DNS line bundle for short), whose existence is essentially equivalent to \cref{conj:mconj}. We also introduce a universal DNS line bundle in \cref{defn:univdns}, which allows us to convert questions about complete subvarieties to well-studied problems about certain line bundles on the moduli space of curves. Moreover, using universal DNS line bundles and \cref{conj:mconj}, we can prove \cref{conj:mconj2} with relative ease, even though the latter is much stronger. This further illustrates the advantage of using the method of universal DNS line bundles.

While this approach is not strictly necessary for the proof of \cref{conj:mconj} (see \cref{rmk:dnsuse}), it provides a concise proof of \cref{conj:mconj2}. Additionally, this method plays an essential role in constructing the counterexample in characteristic zero, as stated in \cref{thm:themain}. By classifying all possible universal DNS line bundles, we obtain a complete understanding of the counterexample.

Additionally, we explain the significance of the transformation of problems from complete subvarieties to line bundles on the moduli space of curves mentioned previously. The moduli space of curves has a rich structure that provides an array of techniques to establish several valuable theorems on line bundles. For instance, Keel used the reduction through clutching maps between moduli spaces of curves to prove \cref{thm:omegasemi}, which is an essential element in the proof of the Lifting conjectures for positive characteristics. Furthermore, we propose a problem, \cref{conj:onlyhope}, that is solely a statement concerning a line bundle on the moduli space of curves, but its solution implies a significant theorem on the complete subvariety problem. Additionally, this `universal' approach presents a way to investigate the Lifting conjectures even when no universal DNS line bundles exist, as seen in \cref{prob:level}.

The counterexample to the Lifting Conjectures in characteristic $0$ when $n$ is sufficiently large reveals a significant difference between the geometry of $\mathcal{M}_{g,n}$ in characteristic $0$ and characteristic $p$. This echoes similar findings about the analogous problem for moduli of abelian varieties found by Keel and Sadun \cite{KS03} (see \cref{rmk:KeelSadun} for further discussion). It also raises the natural question about what values of $n$  the Lifting Conjecture holds. These issues are discussed in more detail in \S \ref{sec:char0}.

\textbf{Plan of the paper}  In \cref{sec:KP}, we discuss, provide  equivalent formulations, and applications of \cref{conj:mconj} and \cref{conj:mconj2}. In \cref{sec:LNS} we describe DNS line bundles, and their properties. In \cref{sec:results}, we prove \cref{thm:themain} and discuss differences between our findings in characteristic $p$ and characteristic $0$. We also present \cref{conj:onlyhope}, an assertion about a particular line bundle on the moduli space of curves, that if true,  would imply nontrivial results about complete subvarieties in characteristic $0$.

\textbf{Notation} Let  $\overline{\mathcal{M}}_{g,n}$ denote the moduli stack of stable $n$-pointed curves of genus $g$, and if $2g-2+n>0$, let $\overline{\operatorname{M}}_{g,n}$ be the corresponding coarse moduli spaces. By $\pi:\mathcal{U}_{g,n}\to \mathcal{M}_{g,n}$ and $\pi:\overline{\mathcal{U}}_{g,n}\to \overline{\mathcal{M}}_{g,n}$ we mean the universal curves on $\mathcal{M}_{g,n}$ and $\overline{\mathcal{M}}_{g,n}$ with universal sections $\bar{s}_1,\cdots, \bar{s}_n$ and let $\overline{S}_1,\cdots, \overline{S}_n$ be their images. Let $\pi_i:\overline{\mathcal{M}}_{g,n+1}\to \overline{\mathcal{M}}_{g,n}$ the map forgetting the $i$th point. Our notation is different from \cite{Ke99}: our $\overline{\mathcal{U}}_{g,n}$ is same as $\mathcal{U}_{g,n}$ of \cite{Ke99}. Moreover, let $\partial \mathcal{M}_{g,n}$ be the locus of singular curves. By this boundary of the moduli space of curves we mean the `Deligne-Mumford boundary' of \cite{Ke99}. Note that $\overline{\mathcal{U}}_{g,n}$ is canonically isomorphic to $\overline{\mathcal{M}}_{g,n+1}$. In this paper, we will implicitly identify them.

Here $f: C\to S$ will denote a family of stable curves over the base scheme $S$, and $g$ be the genus of fiber. The scheme $S$ is a $K$-scheme, and, in many cases, $S$ will be a variety. In this paper, we assume that all varieties are irreducible. For $f: C\to S$ a family of curves with $n$ marked points, we will denote the $n$ disjoint sections of $f$ by $s_1,\cdots, s_n: S\to C$. Here, $\Omega_{C/S}^1$ means the relative dualizing sheaf (not the sheaf of relative Kahler differentials). For $1\le i\le n$, let $S_i$ be the image of $s_i$. Since $s_i$ are sections of a proper map of relative dimension 1 and their image is a smooth point, $S_i$ are Cartier divisors.

\section*{Acknowledgement}

The author would like to thank Angela Gibney for introducing the problem, her continued support, and helpful discussions. The author would also like to thank David Harbater for providing a very interesting example, \cref{rmk:counter2}, related to the lifting conjectures.

\section{the lifting conjectures, equivalent formulations  and applications}\label{sec:KP}

The following is a rigorous formulation of (a variant of) \cref{Kthm:kodairashort}.

\begin{Kthm}\label{Kthm:kodaira}\cite{Par68, Har85, Ma85, Za05}
    Assume $\text{char }K\ne 2$. Let $f:C\to S$ be a family of genus $g$ curves with $2n$ marked points $s_1,\cdots, s_{2n}:S\to C$ over a variety $S$. Then there exist a finite map $p:S'\to S$ and a family of genus $2g+n-1$ curves $f':C'\to S'$ such that for any closed point $x\in S'$, $f'^{-1}(x)$ is a double cover of $f^{-1}(p(x))$ ramified exactly at $s_1(p(x)),\cdots, s_{2k}(p(x))$. If $\text{char }K\ne 3$ and $f: C\to S$ be a family of genus $g$ curves with $n$ marked points, then the same construction with triple covers gives a family of genus $3g+n-2$ curves over $S'$.
\end{Kthm}

Motivated by \cref{Kthm:kodaira}, we proposed \cref{conj:mconj} and \cref{conj:mconj2}, stated in the introduction as the Lifting Conjectures, with the aim of finding complete subvarieties of $\operatorname{M}_{g,n}$.  Here in \cref{sec:ConRemarks} we discuss these conjectures, giving  reformulations in \cref{sec:equivchar}, and applications in \cref{sec:App1Proof}. 

\subsection{The lifting conjectures}\label{sec:ConRemarks} 
    Here, we state a weak version of our lifting conjecture.

    \begin{conj}\label{conj:mconj}
Assume $2g-2+n>0$ and $n\ge 1$. Let $S$ be a variety, and $u:S\to \mathcal{M}_{g,n}$ a morphism. Then, there exists a proper surjective morphism $p: S'\to S$, and a morphism $u':S'\to \mathcal{M}_{g,n+1}$ such that the following diagram ($2$-)commutes.
\[\begin{tikzcd}
    S' \arrow[r, "u'"]\arrow[d, "p"]& \mathcal{M}_{g,n+1}\arrow[d, "\pi"]\\
    S  \arrow[r, "u"]& \mathcal{M}_{g,n}
\end{tikzcd}\]
\end{conj}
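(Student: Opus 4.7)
My plan is to reduce the conjecture to the construction of an auxiliary divisor on the pullback family, and then produce this divisor by exploiting a semi-ample line bundle on the universal curve. Given $u:S\to\mathcal{M}_{g,n}$ corresponding to a family $f:C\to S$ of smooth $n$-pointed curves with sections $S_1,\ldots,S_n$, the conjecture amounts to finding a closed subscheme $D\subset C\setminus\bigcup_i S_i$ for which $f|_D:D\to S$ is proper and surjective. Indeed, setting $S':=D$ and $p:=f|_D$, the inclusion $D\hookrightarrow C$ furnishes an $(n+1)$st section of $C\times_S D\to D$ disjoint from the pulled-back $S_i$ (since $d\notin S_i$ for every $d\in D$), and this data assembles into the desired morphism $u':S'\to\mathcal{M}_{g,n+1}$.

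To construct such $D$ universally, I would work on $\overline{\mathcal{U}}_{g,n}\cong\overline{\mathcal{M}}_{g,n+1}$ and introduce the line bundle
\[
L \;:=\; \omega_{\pi}\bigl(\overline{S}_1+\cdots+\overline{S}_n\bigr).
\]
Two properties of $L$ are crucial. Its fiber degree over $\overline{\mathcal{M}}_{g,n}$ is $2g-2+n>0$, so $L$ is $\pi$-ample on the smooth locus. Second, using the normal-bundle identity $\mathcal{O}(\overline{S}_j)|_{\overline{S}_j}=\bar{s}_j^*\omega_\pi^{-1}$ together with the disjointness $\overline{S}_i\cap\overline{S}_j=\emptyset$ for $i\ne j$, a direct computation shows that $L|_{\overline{S}_j}$ is trivial for every $j$.

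The decisive input is Keel's semi-ampleness theorem in positive characteristic (\cref{thm:omegasemi}), from which I would conclude that some positive power $L^{\otimes m}$ is globally generated on $\overline{\mathcal{U}}_{g,n}$. The induced morphism $\phi:\overline{\mathcal{U}}_{g,n}\to\mathbb{P}^N$ necessarily contracts each $\overline{S}_j$ to a single point (because $L|_{\overline{S}_j}$ is trivial), so I could select a hyperplane $H\subset\mathbb{P}^N$ missing those $n$ image points and set $\widetilde{D}:=\phi^{-1}(H)$: a divisor in $\overline{\mathcal{U}}_{g,n}$ disjoint from $\bigcup_j\overline{S}_j$ and relatively ample over $\overline{\mathcal{M}}_{g,n}$. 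Pulling $\widetilde{D}$ back to $C$ along the tautological morphism $C\to\overline{\mathcal{U}}_{g,n}$ induced by $u$ delivers the required $D$.

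The principal obstacle I anticipate is verifying that $L$ is semi-ample on $\overline{\mathcal{M}}_{g,n+1}$, and this is precisely where positive characteristic is essential: Keel's criterion reduces semi-ampleness to its restriction to the exceptional locus, and I would then induct on the Deligne-Mumford boundary stratification, reducing to products of smaller moduli spaces where the statement is either known or inductively established. In characteristic zero Keel's criterion fails, and I expect that the counterexample to the Weak Version promised by \cref{thm:themain} for $n>2g+2$ arises precisely from a failure of the analog of $L$ to be semi-ample on some stratum, explaining the characteristic-dependent nature of the conjecture. A secondary subtlety is the stack-versus-coarse-space distinction and the possible need to pass to a projective compactification of $S$ compatible with $u$ (via stable reduction) before pulling $\widetilde{D}$ back; this should be routine.
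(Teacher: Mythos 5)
Your proposal is correct for the positive-characteristic case (which is all the paper establishes — \cref{thm:themain} shows the statement actually fails in characteristic $0$ for $n>2g+2$, and you rightly flag that Keel's semiampleness is the char-$p$ bottleneck), and it follows essentially the same route as the paper: reduce to producing a divisor in $C$ disjoint from the sections (\cref{prop:equiv}), observe that $\omega_\pi(\overline{S}_1+\cdots+\overline{S}_n)=\psi_{n+1}$ is trivial on the universal sections and semiample by Keel, and pull back a hyperplane section avoiding the contracted images. The only difference is cosmetic — the paper cites Keel's theorem as a black box and handles non-proper bases via a Nagata compactification after a finite flat base change, whereas you perform the hyperplane construction once universally on $\overline{\mathcal{U}}_{g,n}$ — but the mathematical content is identical.
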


    \cref{conj:mconj} is stated with the language of stacks because as it is more comprehensive and useful in some cases (cf. \cref{rmk:counter}, \cref{rmk:stack}). Except for this section, we will consider this conjecture in the form of (3) or (4) in \cref{prop:equiv}.

    \begin{rmk}\label{rmk:counter}
         In \cref{conj:mconj} we require $n\ge 1$ as the statement is trivial when $n=0$. However, the condition $2g-2+n>0$ is essential. Consider the case $(g,n)=(0,2)$. In this case, $\mathcal{M}_{0,2}=\text{B}\mathbb{G}_m$, $\mathcal{M}_{0,3}=\text{Spec }K$, and $\mathcal{M}_{0,3}\to \mathcal{M}_{0,2}$ is the natural projection map $\text{Spec }K\to \text{B}\mathbb{G}_m$. A morphism $S\to \text{B}\mathbb{G}_m$ corresponds to a line bundle on $S$, and this morphism lifts along $\text{Spec }K\to \text{B}\mathbb{G}_m$ if and only if the line bundle is trivial. Therefore, \cref{conj:mconj} for $(g,n)=(0,2)$ is equivalent to the following: for any line bundle $\mathcal{L}$ on $S$, there exists a proper surjective map $p: S'\to S$ such that $p^\ast \mathcal{L}$ is trivial. However, if $S$ is a projective variety, and $\mathcal{L}$ is an ample line bundle, then $p^\ast \mathcal{L}$ cannot be trivial. Therefore, \cref{conj:mconj} is not true in this case. Note that if $S$ is projective and smooth, then by \cite[Theorem 1.1]{BS11}, such $p$ exists if and only if $\mathcal{L}$ is essentially finite as a vector bundle. 
    \end{rmk}

    \begin{rmk}\label{rmk:counter2}
        It is essential to allow a proper (or finite, see \cref{prop:equiv}) base change. Consider the case where $S$ is a proper smooth curve of genus at least $2$ and $C\to S$ is a non-isotrivial family of curves of genus $g\ge 2$ over $S$. Let $s_1,\cdots,s_n: S\to C$ be a collection of pairwise disjoint sections. Then, the intersection number of $s_i$ and $s_j$ is zero if $i\ne j$. By basic intersection theory, the self-intersection number of $s_i$ is $-\deg s_i^\ast\Omega_{C/S}^1$. By \cite[Proposition 6]{Par68}, $s_i^\ast\Omega_{C/S}^1$ is ample, and hence the self-intersection number is negative. Therefore, $s_i$ are linearly independent vectors in $\text{NS}(C)\otimes \mathbb{Q}$. We conclude that $n$ must be smaller than or equal to the Picard number of $C$. In particular, we cannot recursively find a new disjoint section without a finite base change. In \cite[Chapter 5]{Za05}, Zaal explicitly constructs a family of genus $31$ smooth curves without any section.

        Another very interesting example occurs in \cite{CH85}. Let $\left(\mathbb{P}^1\right)^n$ be a direct product of projective lines and let $\Delta_n\subseteq \left(\mathbb{P}^1\right)^n$ be the closed subscheme consisting of points that have two equal coordinates. Then there exists a natural map $\left(\mathbb{P}^1\right)^n\setminus \Delta_n\to \mathcal{M}_{0,n}$, which is a surjective and smooth map whose fiber is $\left(\mathbb{P}^1\right)^3\setminus \Delta_3$. The pullback of this map along $\mathcal{M}_{0,n+1}\to \mathcal{M}_{0,n}$ is the projection map $\pi_n:\left(\mathbb{P}^1\right)^{n+1}\setminus \Delta_{n+1}\to \left(\mathbb{P}^1\right)^n\setminus \Delta_n$. \cref{conj:mconj} holds if and only if a section of this projection map exists after a proper (or finite) base change. We will prove this in \cref{thm:kapapp}. Moreover, if our base field is $\mathbb{C}$, according to \cite[Lemma 1.1]{CH85}, there exists a continuous section of $\pi_n$. However, as Coombes and Harbater showed in the remark after \cite[Lemma 1.1]{CH85}, there is no (algebraic) section of $\pi_n$. This also reveals why we need proper/finite base change in \cref{conj:mconj}.
    \end{rmk}

Some authors already implicitly used a small part of this conjecture to construct complete subvarieties of $\rm{M}_g$. For example, in \cite{Za99}, Zaal constructed a complete subsurface of $\rm{M}_6$ essentially by proving the following special case of \cref{conj:mconj}.

\begin{Pthm}\label{Pthm:Zathm}\cite{Za99}
    Let $f: C\to S$ be a complete non-isotrivial family of smooth curves of genus $g$ over a complete curve $S$. This induces a morphism $u:C\to \mathcal{M}_{g,1}=\mathcal{U}_{g}$. Then $u$ satisfies \cref{conj:mconj}.
\end{Pthm}

To calculate $\Tilde{r}(g,n)$, we need a stronger version of \cref{conj:mconj}.

\begin{conj}\label{conj:mconj2}
Assume $2g-2+n>0$ and $n\ge 1$. Let $S$ be a variety over the field $K$ and $u:S \to \mathcal{M}_{g,n}$ a morphism. Let $x\in \mathcal{M}_{g,n+1}$ be a closed point such that $\pi(x)\in u(S)$. Then there exist a variety $S'$, a proper surjective morphism $p:S'\to S$, and a morphism $u':S'\to \mathcal{M}_{g,n+1}$ such that the following diagram ($2$-)commutes and $x\in u'(S')$.
\[\begin{tikzcd}
    S' \arrow[r, "u'"]\arrow[d, "p"]& \mathcal{M}_{g,n+1}\arrow[d, "\pi"]\\
    S  \arrow[r, "u"]& \mathcal{M}_{g,n}
\end{tikzcd}\]
\end{conj}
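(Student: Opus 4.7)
The plan is to deduce \cref{conj:mconj2} from \cref{conj:mconj} via the method of universal DNS line bundles alluded to in the introduction. First, reformulate both conjectures geometrically. Let $f:C_S\to S$ be the pullback of the universal curve via $u$, with tautological sections $S_1,\dots,S_n$. The point $x$ with $\pi(x)=u(s_0)$ corresponds to a closed point $\tilde x\in C_{s_0}\setminus\bigcup S_i$. A lift $u':S'\to\mathcal{M}_{g,n+1}$ along a proper surjective $p:S'\to S$ is equivalent to specifying an $(n{+}1)$-st section $\sigma$ of $C_{S'}\to S'$ disjoint from the pullbacks of the $S_i$, or, in line-bundle language, a line bundle $\mathcal{L}$ on $C_{S'}$ with a distinguished global section whose vanishing locus is $\sigma(S')$. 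The strong version additionally requires $\sigma(s_0')=\tilde x$ for some $s_0'\in p^{-1}(s_0)$, i.e.\ $\mathcal{L}|_{C_{s_0'}}\cong\mathcal{O}(\tilde x)$.

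With this in place, the lift is produced in two steps. Step (i): by \cref{conj:mconj} there exist $p_0:S_0'\to S$ proper surjective and a DNS line bundle $\mathcal{L}_0\cong\mathcal{O}_{C_{S_0'}}(\sigma_0(S_0'))$ whose canonical section cuts out an $(n{+}1)$-st section $\sigma_0$, typically with $\sigma_0(s_0')\ne\tilde x$. Step (ii): modify $\mathcal{L}_0$ so its fiber class at $s_0'$ becomes $\mathcal{O}(\tilde x)$ by constructing a horizontal curve $B\subset C_{S_0'}$ through $\tilde x$ disjoint from the $S_i$, then passing to a further proper cover $S_1'\to S_0'$ on which $B$ splits as a union of sections of $C_{S_1'}\to S_1'$; at least one such section $\tau$ hits $\tilde x$. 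The line bundle $\mathcal{L}_1:=\mathcal{O}_{C_{S_1'}}(\tau)$ is DNS by construction, its canonical section vanishes exactly on $\tau$, the fiber restriction at the chosen preimage of $s_0$ is $\mathcal{O}(\tilde x)$, and $\tau$ is disjoint from the $S_i$. The resulting lift $u':S_1'\to\mathcal{M}_{g,n+1}$ therefore contains $x$ in its image.

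The main obstacle is Step (ii): producing a horizontal curve $B$ through $\tilde x$ disjoint from the divisors $S_i$, which is nontrivial since the $S_i$ are codimension-one in $C_{S_0'}$ and generic curves meet them. This is exactly where the universal DNS line bundle formalism of \cref{sec:LNS} is decisive: its universality on $\mathcal{U}_{g,n}$, together with Keel's semiampleness machinery invoked in the proof of \cref{conj:mconj}, provides enough flexibility to choose sections of the relevant line bundle whose zero loci can be prescribed to pass through any given closed point of the generic fiber after sufficient proper cover. Equivalently, the space of DNS line bundles on $C_{S_0'}$, up to proper cover, surjects onto $\mathrm{Pic}^1(C_{s_0})$ via the fiber-restriction map, and the Abel–Jacobi image of $C_{s_0}\setminus\bigcup S_i$ is hit surjectively. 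Once this is available, the remainder of the argument is a formal line-bundle identification, explaining the ``relative ease'' promised in the introduction.
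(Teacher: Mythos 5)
There is a genuine gap, and it sits exactly where you flag ``the main obstacle.'' Your Step (ii) requires a horizontal divisor $B\subset C_{S_0'}$ through $\tilde x$ and disjoint from the $S_i$ (note: for $B$ to split into sections after a proper cover it must be codimension one, not a curve, unless $S$ is a curve). But producing such a $B$ is precisely the equivalent formulation of \cref{conj:mconj2} given in \cref{prop:equiv2}~(3) --- it is the statement to be proved, not an auxiliary construction. You then defer it to the claim that DNS line bundles, up to proper cover, surject onto $\mathrm{Pic}^1(C_{s_0})$ and that the Abel--Jacobi image of $C_{s_0}\setminus\bigcup S_i$ is hit surjectively. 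This claim is neither proved nor needed, and it is far stronger than what the formalism delivers: a DNS line bundle only guarantees \emph{some} effective divisor avoiding the sections, with no control over which points of a given fiber it passes through, and its fiberwise degree is in general large, so the reduction to $\mathrm{Pic}^1$ is also misdirected.

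The missing idea is the role of the \emph{exceptional locus}. The paper's argument (\cref{thm:conj2prf}) takes the semiample class $\psi_{n+1}$, whose exceptional locus is contained in the boundary by Keel's \cref{thm:omegasemi}, passes to the associated morphism $h\colon\overline{C}\to Z$ into a projective scheme (after compactifying and Stein factorization), and observes that $h$ contracts each section $\overline{S}_i$ to a point $x_i$. The crucial step is to show that a closed point $x$ of the open part $C$ not lying on the sections satisfies $h(x)\ne x_i$ for all $i$: any connected curve $D$ contracted by $h$ and joining $x$ to $S_i$ would have $\bar v^\ast\psi_{n+1}\cdot D=0$, forcing $\bar v(D)$ into the exceptional locus, hence into the boundary, contradicting that $\bar v(x)$ is not in the boundary. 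Once $h(x)\notin\{x_1,\dots,x_n\}$, a hyperplane section of $Z$ through $h(x)$ avoiding the $x_i$ pulls back to the required $T$ of \cref{prop:equiv2}~(3). Your proposal never establishes this separation $h(x)\ne x_i$ or any substitute for it, and without it there is no way to force the new divisor through the prescribed point $x$.
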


To the author's knowledge, there are no known special cases of \cref{conj:mconj2}. In \S \ref{sec:charp}, we will prove both \cref{conj:mconj} and \cref{conj:mconj2} is true in characteristic $p$.

Note that \cref{conj:mconj} and \cref{conj:mconj2} are flexible and can be modified. For example, proving \cref{conj:mconj} for proper varieties or even for a specific proper subvariety is sufficient to construct complete subvarieties of $\rm{M}_{g,n}$. We will discuss this in \S \ref{sec:char0} in more detail. This becomes important in \S \ref{sec:char0} since \cref{conj:mconj} is false if the characteristic of the base field is zero and $n$ is sufficiently large (see \cref{thm:conjfalse}).

\subsection{Equivalent characterizations of the lifting conjectures} \label{sec:equivchar}

As mentioned above, almost every result regarding complete subvarieties of $\rm{M}_g$ utilizes the Kodaira-Parshin construction, which reformulates the problem to finding complete subvarieties of $\rm{M}_{g,n}$.  In \cref{conj:mconj} a  systematic method for solving the latter is proposed. Here, we prove \cref{prop:equiv} which gives several conditions equivalent to \cref{conj:mconj}.

\begin{prop}\label{prop:equiv}
    Let $f:C\to S$ be a family of curves of genus $g$ with $n$ marked points $s_1,\cdots, s_n:S\to C$ on a variety $S$. Then the following are equivalent.
    \begin{enumerate}
        \item The morphism $u:S\to \mathcal{M}_{g,n}$ corresponding to $f$ satisfies \cref{conj:mconj}
        \item Same as (1), but $p:S'\to S$ is required to be finite.
        \item There is a closed codimension $1$ subscheme $T$ of $C$ that does not intersect with the images of the sections $s_i$ for all $i$. 
        \item There is a proper surjective map $p:S'\to S$ such that after a base change along $S'\to S$, we can find a section disjoint from $s_i$ for every $i$, 
    \end{enumerate}
\end{prop}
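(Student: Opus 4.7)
The plan is to establish the cycle $(1) \Leftrightarrow (4) \Rightarrow (3) \Rightarrow (2)$, with $(2) \Rightarrow (1)$ being automatic since finite morphisms are proper. The equivalence $(1) \Leftrightarrow (4)$ is essentially formal: $\mathcal{M}_{g,n+1}$ is canonically the open substack of the universal curve $\mathcal{U}_{g,n} \to \mathcal{M}_{g,n}$ obtained by removing the universal sections $\overline{S}_1,\ldots,\overline{S}_n$, so giving a lifting $u':S' \to \mathcal{M}_{g,n+1}$ of $u \circ p$ is the same datum as giving a section of the pullback family $C \times_S S' \to S'$ disjoint from the pulled-back marked sections $s_i \times_S S'$; this is exactly the content of $(4)$.

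For $(4) \Rightarrow (3)$, given such a section $\sigma:S' \to C \times_S S'$, I would set $T := \mathrm{pr}_1(\sigma(S')) \subseteq C$. Properness of $p$ makes $\mathrm{pr}_1$ proper, so $T$ is a closed subscheme of $C$, and the disjointness of $\sigma$ from $s_i \times_S S'$ translates pointwise to $T \cap s_i(S) = \emptyset$. The map $T \to S$ is surjective since it factors the surjection $p$, and each fiber $T_s$ is a closed subscheme of the smooth irreducible curve $C_s$ missing the finitely many points $s_i(s)$, hence finite. Thus $\dim T = \dim S$, so $T$ has codimension one in $C$.

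The main step is $(3) \Rightarrow (2)$. I would choose an irreducible component $T_0 \subseteq T$ with $\dim T_0 = \dim S$ and first argue that $T_0 \to S$ is dominant: otherwise, a dimension count forces $T_0$ to be an irreducible component of $f^{-1}(f(T_0))$, which is itself irreducible since $C \to S$ is a smooth proper family with irreducible fibers, and therefore $T_0 = f^{-1}(f(T_0)) \supseteq s_i(f(T_0))$, contradicting disjointness. So $T_0 \to S$ is proper, surjective, and generically finite; take its Stein factorization $T_0 \to T_0' \to S$, in which $T_0' \to S$ is finite. The key observation is that each fiber of $T_0 \to T_0'$ is a connected closed subscheme of some fiber $C_s$, so is either a single point or the whole curve $C_s$; the latter is excluded because $s_i(s) \notin T_0$ for any $i$. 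Hence $T_0 \to T_0'$ is proper and bijective, in particular finite. Pulling back along the normalization $\widetilde{T_0'} \to T_0'$, the base change $T_0 \times_{T_0'} \widetilde{T_0'} \to \widetilde{T_0'}$ is finite, birational, and has a normal target, so by Zariski's main theorem it is an isomorphism. Its inverse, composed with $T_0 \hookrightarrow C$, produces a section $\widetilde{T_0'} \to C \times_S \widetilde{T_0'}$ disjoint from the pulled-back $s_i$, with $\widetilde{T_0'} \to S$ finite surjective, establishing $(2)$.

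The principal technical subtlety is $(3) \Rightarrow (2)$, namely promoting a proper-surjective base change to a finite one. The crucial input is the point-or-whole-curve dichotomy for connected closed subsets of the smooth curve fibers, where the hypothesis that $T$ avoid the sections $s_i$ is used decisively, combined with Zariski's main theorem to handle the transition through normalization. All other implications are essentially formal, resting on the identification of $\mathcal{M}_{g,n+1}$ with an open substack of the universal curve $\mathcal{U}_{g,n}$, together with routine dimension and properness bookkeeping.
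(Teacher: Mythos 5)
Your logical architecture, $(2)\Rightarrow(1)\Leftrightarrow(4)\Rightarrow(3)\Rightarrow(2)$, genuinely differs from the paper's, which runs $(1)\Rightarrow(2)\Rightarrow(3)\Rightarrow(4)$ with $(4)$ read off as a restatement of $(1)$. The paper does the proper-to-finite promotion directly in $(1)\Rightarrow(2)$: it factors $S'\to S$ through $S\times_{\mathcal{M}_{g,n}}\mathcal{M}_{g,n+1}$, takes the scheme-theoretic image, and uses that $\pi$ has affine fibers so that a proper subscheme of the fiber product is quasi-finite, hence finite over $S$ by Zariski's main theorem. You instead route the promotion through the divisor $T$, using that a closed subset of an irreducible fiber $C_s$ avoiding the marked points must be finite. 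Both are valid; your route keeps everything inside the geometry of the family, while the paper's exploits the affineness of $\pi:\mathcal{M}_{g,n+1}\to\mathcal{M}_{g,n}$.

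Two remarks on your $(3)\Rightarrow(2)$, which is where the routes diverge most. First, it is overbuilt: once you know $T_0\to S$ is proper with finite fibers and dominant, it is already finite (proper plus quasi-finite) and surjective, and the tautological map $(\iota,\mathrm{id}):T_0\to C\times_S T_0$, with $\iota:T_0\hookrightarrow C$ the inclusion, is a section of the pulled-back family disjoint from the $s_i$; taking $S'=T_0$ finishes the proof with no Stein factorization and no normalization. This is exactly the paper's $(3)\Rightarrow(4)$, which happens to produce a finite map. Second, and more importantly, the detour you do take contains a step that fails in positive characteristic — the case this paper cares about most: you infer that the finite bijective morphism $T_0\to T_0'$ becomes birational after base change, but a finite bijection need not be birational in characteristic $p$ (relative Frobenius is the standard example). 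The statement you want is still true, because $T_0\to T_0'$ is the connected-fibers half of a Stein factorization, so $\mathcal{O}_{T_0'}\to g_\ast\mathcal{O}_{T_0}$ is an isomorphism; combined with finiteness (affineness) this makes $g$ an isomorphism outright, with no appeal to birationality or to the normalization. Either fix is one line, but as written the ``bijective hence birational'' inference is the one step that would break.
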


\begin{proof}
(1) implies (2): Let $p:S' \to S$ be a proper surjective map that satisfies (1). Then $p$ factors through $S\times_{\mathcal{M}_{g,n}}\mathcal{M}_{g,n+1}$, which is also a variety, and the projection map $\pi_1:S\times_{\mathcal{M}_{g,n}}\mathcal{M}_{g,n+1}\to S$ has affine fibers. Consider the scheme-theoretic image $S''$ of $S'\to S\times_{\mathcal{M}_{g,n}}\mathcal{M}_{g,n+1}$. Then the natural projection map $S'' \to S$ is proper. Since every fiber of $\pi:\mathcal{M}_{g,n+1}\to \mathcal{M}_{g,n}$ is affine, so is $S''\to S$. Since every fiber of $S''\to S$ is proper and contained in an affine scheme, it is finite. Therefore, $S''\to S$ is proper and quasi-finite, hence also finite by Zariski's main theorem. This shows $S''\to S$ is the desired map.

(2) implies (3): Let $f':C'\to S'$ and $s_1',\cdots, s_n':S'\to C'$ be the base change and $s_{n+1}:S'\to C'$ be the new section. Let $r$ be the composition of $s_{n+1}$ and $C'\to C$ and $T$ be the scheme-theoretic image of $r$. Since $p: S'\to S$ is finite, one can see that $T$ is a codimension $1$ closed subscheme of $C$. Moreover, by definition, $T$ does not intersect with $s_1,\cdots, s_n$. Hence $T$ is the desired subscheme.

(3) implies (4): Let $S'=T$. Then the natural projection $S'=T\to S$ is proper. Also, for the same reason as above, $S'\to S$ is quasi-finite, hence also finite. Since $\dim S=\dim S'$, the image of the projection map $S'\to S$ should be a closed subvariety of the same dimension, hence it is surjective by the irreducibility of $S$. The inclusion $S'\to C$ gives a new section after the base change. Therefore, $S'\to S$ satisfies the condition.

(4) is just another description of (1).
\end{proof}

An analogous form of \cref{prop:equiv} holds for \cref{conj:mconj2}.

\begin{prop}\label{prop:equiv2}
    Let $f:C\to S$ be a family of curves of genus $g$ with $n$ marked points $s_1,\cdots, s_n:S\to C$ on a variety $S$. Then the following are equivalent.
    \begin{enumerate}
        \item The morphism $u:S\to \mathcal{M}_{g,n}$ corresponds to $f$ satisfies the \cref{conj:mconj2}
        \item Same as (1), but $p:S'\to S$ is required to be finite.
        \item For any closed point $x\in C$ not contained in $S_1,\cdots, S_n$, there exists a closed codimension $1$ subscheme $T$ of $C$ that does not intersect with $S_i$'s and contains $x$. 
    \end{enumerate}
\end{prop}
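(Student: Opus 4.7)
The plan is to mirror the proof of \cref{prop:equiv} in the pattern $(1)\Rightarrow(2)\Rightarrow(3)\Rightarrow(1)$, carrying through the additional datum in each step: the closed point $x\in C\setminus\bigcup S_i$ on one side and the corresponding moduli point $\tilde{x}\in\mathcal{M}_{g,n+1}$ on the other. Under the identification $\mathcal{U}_{g,n}\cong\mathcal{M}_{g,n+1}$ fixed in the paper, the point $x$ of the universal curve $C$ corresponds to the $(n+1)$-pointed smooth curve $\tilde{x}=(C_{f(x)},s_1(f(x)),\ldots,s_n(f(x)),x)$, which satisfies $\pi(\tilde{x})=u(f(x))\in u(S)$. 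Tracking the compatibility between $x$ and $\tilde{x}$ across all three implications is the only new ingredient beyond the proof of \cref{prop:equiv}.

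For $(1)\Rightarrow(2)$, I would repeat verbatim the scheme-theoretic image argument used in \cref{prop:equiv}: replace $S'$ by the scheme-theoretic image $S''$ of $S'\to S\times_{\mathcal{M}_{g,n}}\mathcal{M}_{g,n+1}$, and use that $\pi$ has affine fibers together with Zariski's main theorem to conclude that $S''\to S$ is finite and surjective. The induced lift $u'':S''\to\mathcal{M}_{g,n+1}$ satisfies $u''(S'')\supseteq u'(S')\ni\tilde{x}$, so the strengthened property survives the replacement.

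For $(2)\Rightarrow(3)$, given $x\in C\setminus\bigcup S_i$, pass to the associated $\tilde{x}\in\mathcal{M}_{g,n+1}$ and apply $(2)$ to produce a finite surjective $p:S'\to S$ and $u':S'\to\mathcal{M}_{g,n+1}$ with $\tilde{x}\in u'(S')$. This $u'$ yields a new section $s_{n+1}:S'\to C':=C\times_S S'$ disjoint from the pulled-back sections $s_i'$; let $q:C'\to C$ be the projection, and let $T$ be the scheme-theoretic image of $q\circ s_{n+1}$. Since $s_{n+1}$ is a closed immersion and $q$ is finite (being the base change of $p$), the composition $q\circ s_{n+1}$ is finite, so $T$ is a codimension one closed subscheme of $C$. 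If $T$ met some $S_i$ at a point of the form $q(s_{n+1}(y))=s_i(s)$ with $s=p(y)$, then $s_{n+1}(y)$ and $s_i'(y)$ would be two points of the fiber $C'_y$ sharing the same image in $C_s$ under the isomorphism $q|_{C'_y}:C'_y\xrightarrow{\sim}C_s$, forcing them to coincide and contradicting the disjointness of the sections. Finally, $x\in T$ because the preimage $y_0\in S'$ of $\tilde{x}$ satisfies $q(s_{n+1}(y_0))=x$ by the very definition of $\tilde{x}$.

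For $(3)\Rightarrow(1)$, given $x$, apply $(3)$ to produce a $T$ containing $x$ and disjoint from every $S_i$, and replace $T$ by an irreducible component through $x$. Set $S':=T$ and $p:=f|_T:S'\to S$. Each fiber $T\cap C_s$ must be zero-dimensional, for otherwise it would equal the one-dimensional curve $C_s$, forcing $s_i(s)\in T\cap S_i=\emptyset$; hence $p$ is proper and quasi-finite, thus finite, and since $\dim T=\dim S$ with both $T$ and $S$ irreducible, $p$ is surjective. The diagonal-type section $s_{n+1}:S'\to C'$, $t\mapsto(t,t)$, is disjoint from each pulled-back $s_i'$ (again because $T\cap S_i=\emptyset$) and carries $x\in T=S'$ to the point of $C'$ lying over $x\in C$, so the resulting $(n+1)$-pointed family produces $u':S'\to\mathcal{M}_{g,n+1}$ with $\tilde{x}\in u'(S')$. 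I expect the main obstacle to be precisely the bookkeeping linking $x\in C$ to $\tilde{x}\in\mathcal{M}_{g,n+1}$ in both directions; all other steps are formally parallel to \cref{prop:equiv}, so their translation to the pointed setting should be routine.
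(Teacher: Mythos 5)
Your overall architecture matches what the paper intends (the paper omits the proof, asserting it is ``essentially the same'' as that of \cref{prop:equiv}), and your steps $(1)\Rightarrow(2)$ and $(3)\Rightarrow(1)$ are correct, including the bookkeeping of the extra point. The gap is in $(2)\Rightarrow(3)$, at the sentence ``$q(s_{n+1}(y_0))=x$ by the very definition of $\tilde{x}$.'' The hypothesis $u'(y_0)=\tilde{x}$ is an equality of closed points of the moduli stack, i.e.\ it only asserts an abstract isomorphism of $(n+1)$-pointed curves $(C'_{y_0},s_1'(y_0),\dots,s_n'(y_0),s_{n+1}(y_0))\cong(C_{f(x)},s_1(f(x)),\dots,s_n(f(x)),x)$. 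It does not give $p(y_0)=f(x)$ --- only $u(p(y_0))=u(f(x))$, and $u$ need not be injective --- and even when $p(y_0)=f(x)$ the isomorphism may be a nontrivial automorphism of the $n$-pointed fiber that moves $x$. What your construction actually yields is that $T$ meets the fiber $v^{-1}(\tilde{x})$ of the classifying map $v:C\setminus\bigcup_i S_i\to\mathcal{M}_{g,n+1}$, not that $T$ contains the particular point $x$ you started from. So, as written, $(2)\Rightarrow(3)$ only establishes statement $(3)$ for at least one point in each fiber of $v$, which is weaker than $(3)$ unless $v$ is injective on $C\setminus\bigcup_i S_i$ (e.g.\ $u$ injective and the pointed fibers automorphism-free).

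To close this you need either a further argument producing a divisor through the specific point $x$, or a reformulation: replacing $(3)$ by ``for every closed point $\tilde{x}\in\mathcal{M}_{g,n+1}$ over $u(S)$ there is a codimension-one closed $T\subseteq C$ avoiding the $S_i$ with $\tilde{x}\in v(T)$'' makes your argument complete and still suffices for \cref{conj:mconj2}. Note also that the only direction the paper ever uses is $(3)\Rightarrow(1)$ (in the proofs of \cref{thm:conj2prf} and \cref{thm:kapapp}), and that is exactly the direction you handle correctly: given $\tilde{x}$, choose any preimage $x\in v^{-1}(\tilde{x})$ off the sections, apply $(3)$, and observe $v(T)\ni\tilde{x}$. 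So the gap does not threaten the paper's applications, but it does leave the literal implication $(1)\Rightarrow(3)$ unproven.
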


The proof is essentially the same as the proof of \cref{prop:equiv}, and we omit it.

Note that \cref{conj:mconj} and \cref{conj:mconj2} are just handy forms of more general, essentially equivalent conjectures. To show this, we will introduce a lemma.

\begin{lem}\label{lem:covmod}
    Assume $2g-2+n>0$.
    \begin{enumerate}
        \item There is a smooth variety $X$ and a finite, surjective, etale morphism $u_X:X\to \mathcal{M}_{g,n}$.
        \item There is a $K$-scheme $\overline{Y}$ and a finite, surjective, flat morphism $u_Y:\overline{Y}\to \overline{\mathcal{M}}_{g,n}$.
    \end{enumerate}
\end{lem}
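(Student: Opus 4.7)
The plan for part (1) is to use classical level structures on curves. I would fix an integer $m \geq 3$ coprime to $\mathrm{char}\,K$ and consider the moduli problem $\mathcal{M}_{g,n}[m]$ parametrizing smooth $n$-pointed genus $g$ curves together with a full symplectic level-$m$ structure on $H^1(C,\mathbb{Z}/m\mathbb{Z})$. By the classical theorem of Deligne--Mumford, since $m \geq 3$ rigidifies all pointed smooth curves (\emph{i.e.} kills all nontrivial automorphisms), this functor is represented by a smooth quasi-projective variety $X$. The forgetful morphism $u_X : X \to \mathcal{M}_{g,n}$ is a torsor under the finite group $\mathrm{Sp}_{2g}(\mathbb{Z}/m\mathbb{Z})$ and is therefore finite, surjective, and \'etale, giving (1).

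For part (2), I would start with $X$ from above and take $\overline{Y}$ to be the normalization of $\overline{\mathcal{M}}_{g,n}$ in the function field extension induced by $u_X$. Concretely, choose a smooth \'etale atlas $U \to \overline{\mathcal{M}}_{g,n}$, normalize each affine piece of $U$ in the corresponding finite separable field extension, and descend/glue; the outputs patch to a normal $K$-scheme $\overline{Y}$ equipped with a finite surjective morphism $u_Y : \overline{Y} \to \overline{\mathcal{M}}_{g,n}$ that restricts to $u_X$ over the open dense locus $\mathcal{M}_{g,n} \subseteq \overline{\mathcal{M}}_{g,n}$.

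To upgrade finiteness and surjectivity to flatness, I would appeal to miracle flatness. Since $\overline{\mathcal{M}}_{g,n}$ is a smooth Deligne--Mumford stack and $u_Y$ has equidimensional (zero-dimensional) fibers, it suffices that $\overline{Y}$ be Cohen--Macaulay. If the naive normalization is not directly verified to be Cohen--Macaulay, one can instead invoke the construction by Looijenga (or Pikaart--de Jong) of compactified moduli of curves with sufficiently refined (possibly non-abelian) level structures: this produces a smooth projective scheme admitting a finite surjective morphism to $\overline{\mathcal{M}}_{g,n}$, which is automatically flat by miracle flatness.

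The main technical obstacle lies in part (2): the \'etale structure of $u_X$ does not extend to the boundary, because boundary strata of $\overline{\mathcal{M}}_{g,n}$ carry nontrivial stack automorphisms (arising from automorphisms of nodal curves such as elliptic tails and separating nodes). Thus we cannot hope for $u_Y$ to be \'etale; we must settle for flat. The real work is therefore either to verify the Cohen--Macaulay property of $\overline{Y}$ directly (using the explicit local structure of the boundary of $\overline{\mathcal{M}}_{g,n}$, which is a normal crossings divisor in the stack sense), or to cite the known smooth compactifications of level-structure moduli. Either route yields the desired finite, surjective, flat morphism $u_Y : \overline{Y} \to \overline{\mathcal{M}}_{g,n}$.
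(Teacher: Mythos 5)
Your part (1) is exactly the paper's route: the paper invokes Mumford's construction of the moduli space of curves with an abelian level structure \cite{Mum83}, which is precisely your level-$m$ cover for $m\ge 3$ prime to $\operatorname{char}K$; this part is fine. For part (2) the paper does something much shorter: it cites \cite[Theorem 2.1]{KV04} (Kresch--Vistoli), a general theorem producing a finite, flat, surjective cover of a smooth separated Deligne--Mumford stack with quasi-projective coarse space by a quasi-projective scheme. So your part (2) is a genuinely different route, and its primary branch has a real gap.

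The gap is in the normalization step. Normalizing the charts of an \'etale atlas of $\overline{\mathcal{M}}_{g,n}$ in the function field of $X$ and descending does not produce a scheme: descent along the atlas yields the normalization of the \emph{stack} $\overline{\mathcal{M}}_{g,n}$ in that field extension, which is a normal Deligne--Mumford stack, finite over $\overline{\mathcal{M}}_{g,n}$, but in general not representable. You correctly sense that something goes wrong at the boundary, but you misdiagnose it as merely losing \'etaleness; the actual failure is that the normalization retains nontrivial stabilizers over $\partial\mathcal{M}_{g,n}$, because abelian level structures do not rigidify stable curves --- this is exactly what motivated the Prym and Teichm\"uller-type level structures of Looijenga and Pikaart--de Jong. (Even granting representability, normal does not imply Cohen--Macaulay in dimension $\ge 3$, so miracle flatness would still need an input; you do flag this second issue.) Your fallback route --- citing the smooth projective compactified moduli of curves with sufficiently fine non-abelian level structures (Looijenga, Pikaart--de Jong, and Boggi--Pikaart for $n>0$) and then applying miracle flatness to a finite morphism from a Cohen--Macaulay scheme to the regular stack $\overline{\mathcal{M}}_{g,n}$ --- does work where those constructions apply (level invertible in $K$), and it even buys smoothness of the cover rather than just flatness; but it rests on substantially harder input than the paper's one-line appeal to \cite{KV04}, which is uniform in the characteristic.
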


\begin{proof}
    (1) follows from the construction of moduli space of smooth curves with an abelian level structure. See \cite{Mum83}. (2) is a direct corollary of \cite[Theorem 2.1]{KV04}.
\end{proof}

\begin{prop}\label{prop:general}
    The following are equivalent.
    \begin{enumerate}
        \item There exists a smooth variety $S$ and a finite, surjective, flat morphism $u: S \to \mathcal{M}_{g,n}$ such that \cref{conj:mconj} (resp. \cref{conj:mconj2}) holds for $u$.
        \item \cref{conj:mconj} (resp. \cref{conj:mconj2}) holds for every smooth variety $S$.  
        \item \cref{conj:mconj} (resp. \cref{conj:mconj2}) holds for every $K$-scheme $S$ with a finite, surjective, flat map $p:S'\to S$.
    \end{enumerate}
\end{prop}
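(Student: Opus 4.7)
I plan to prove the equivalence via the cycle $(2) \Rightarrow (1) \Rightarrow (3) \Rightarrow (2)$, with the substantive work in $(1) \Rightarrow (3)$. The implication $(2) \Rightarrow (1)$ is immediate from \cref{lem:covmod}(1): it provides a smooth variety $X$ equipped with a finite surjective étale (hence flat) morphism $u_X\colon X \to \mathcal{M}_{g,n}$, and applying (2) to $u_X$ yields (1). The implication $(3) \Rightarrow (2)$ is equally formal, since every smooth variety is a $K$-scheme and a finite surjective flat $p$ is in particular proper surjective, so the lift produced by (3) is a valid witness for (2).

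The heart of the argument is $(1) \Rightarrow (3)$, which I carry out by base change. Let $u\colon S_0 \to \mathcal{M}_{g,n}$ be the finite surjective flat morphism from (1), and let $p_0\colon S_0' \to S_0$, $u_0'\colon S_0' \to \mathcal{M}_{g,n+1}$ witness \cref{conj:mconj} (or \cref{conj:mconj2}) for $u$; by \cref{prop:equiv} I may take $p_0$ to be finite surjective. Given an arbitrary $K$-scheme $T$ and morphism $v\colon T \to \mathcal{M}_{g,n}$, form the fiber products
\[
T_0 := T \times_{\mathcal{M}_{g,n}} S_0, \qquad T' := T \times_{\mathcal{M}_{g,n}} S_0' \;\cong\; T_0 \times_{S_0} S_0'.
\]
The projection $T_0 \to T$ is the base change of $u$, hence finite surjective flat; and $T' \to T_0$ is the base change of $p_0$, hence finite surjective. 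Thus the composite $p_T\colon T' \to T$ is finite and surjective. Defining $v'\colon T' \to S_0' \xrightarrow{u_0'} \mathcal{M}_{g,n+1}$, a diagram chase from $\pi u_0' = u p_0$ produces $\pi v' = v p_T$, witnessing \cref{conj:mconj} for $v$. For the strong form, given $x \in \mathcal{M}_{g,n+1}$ with $\pi(x) \in v(T)$, I pick $t \in T$ with $v(t) = \pi(x)$ and (using surjectivity of $u$) a point $s \in S_0$ above $\pi(x)$; applying (1) to the pair $(s, x)$ yields $s' \in S_0'$ with $u_0'(s') = x$ and $p_0(s') = s$, and the point of $T'$ determined by $(t, s')$ is sent to $x$ by $v'$.

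The main obstacle is upgrading $p_T$ from finite surjective to finite surjective flat, since (1) only guarantees the former for $p_0$. I expect to handle this by replacing $S_0'$ with a Cohen--Macaulay finite cover of $S_0$: since $S_0$ is smooth (hence regular), one can normalize and then apply a Macaulayfication / miracle-flatness argument to ensure that any reduced, equidimensional finite cover of $S_0$ becomes flat. Once $p_0$ is flat, so is its base change $T' \to T_0$, and composing with the flat map $T_0 \to T$ gives flatness of $p_T$, completing $(3)$. Stitching the three implications together closes the cycle and proves the proposition.
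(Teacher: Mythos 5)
Your cycle of implications, and the reduction of $(1)\Rightarrow(3)$ to a base-change argument along the finite flat cover from \cref{lem:covmod}(1), is exactly the paper's strategy, and the formal implications $(2)\Rightarrow(1)$ and $(3)\Rightarrow(2)$ are fine. The genuine gap is in the one step you defer: upgrading $p_0\colon S_0'\to S_0$ to a finite \emph{flat} surjective map. The mechanism you propose does not work. The normalization of $S_0'$ is finite over $S_0$, but a normal variety of dimension $\ge 3$ need not be Cohen--Macaulay, so miracle flatness does not apply to it; and Macaulayfication in the sense of Kawasaki is a proper \emph{birational} modification, which destroys finiteness over $S_0$ (there is no finite CM-ification in general). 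Indeed your intermediate claim that any reduced, equidimensional finite cover of a regular scheme becomes flat after normalization is false: a Noether normalization of a normal non-CM threefold onto $\mathbb{A}^3$ is such a cover and cannot be flat, since finite flat over a regular base forces the source to be Cohen--Macaulay.

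The correct replacement for $S_0'$ is not its normalization but its image in the universal curve, and this is what the paper does. By \cref{prop:equiv}(3), the hypothesis in (1) produces a closed codimension-$1$ subscheme $T$ of the universal curve $C_X$ over the smooth cover $X$, disjoint from the sections; since $C_X$ is smooth and $T$ is an effective Cartier divisor there, $T\hookrightarrow C_X$ is a regular embedding, so $T\to X$ is finite, surjective, and lci, hence flat (a Cartier divisor in a smooth variety is Cohen--Macaulay and $X$ is regular, so miracle flatness applies; the paper cites \cite[Exercise 3.5, Chapter 6]{Liu02}). Base-changing this flat divisor --- rather than $S_0'$ itself --- along an arbitrary $S\to\mathcal{M}_{g,n}$ then yields the finite flat surjective witness required by (3); the rest of your argument (commutativity, surjectivity, and the pointed refinement for \cref{conj:mconj2}) goes through essentially as you describe. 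So your instinct that Cohen--Macaulayness over the regular base plus miracle flatness is the key is right, but you must realize the CM model concretely as the divisor $T\subseteq C_X$ rather than by modifying $S_0'$ directly.
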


\begin{proof}
    We prove this for \cref{conj:mconj} as the argument for \cref{conj:mconj2} is analogous. (3) is strictly stronger than (2). By \cref{lem:covmod} (1), (2) implies (1). Hence it is enough to show that (1) implies (3). Let $u_X: X\to \mathcal{M}_{g,n}$ be a finite flat and surjective map from a smooth variety $X$. Let $f_X: C_X\to X$ and $s_{X,1},\cdots, s_{X,n}: X\to C_X$ be the corresponding family of smooth curves and marked points. By the assumption, $u_X: X\to \mathcal{M}_{g,n}$ satisfies \cref{conj:mconj}, and by \cref{prop:equiv} (3), there exists a closed codimension $1$ subscheme $T$ of $C_X$ that does not intersect with $s_{X,1},\cdots, s_{X,n}$. Since $C_X$ is also a smooth variety and $T$ is an effective divisor of $C_X$, $T\to C_X$ is a regular embedding. Therefore, $T\to X$ is an lci morphism. Using the same argument as \cref{prop:equiv}, we can show that $T\to X$ is also finite and surjective. Since the finite surjective lci morphism is flat \cite[Exercise 3.5, Chapter 6]{Liu02}, $T\to X$ is a finite flat surjective map. 

    \[\begin{tikzcd}
     S' \arrow[rr]\arrow[rd] & & C_0 \arrow[rr]\arrow[rd]\arrow[dd] & & C \arrow[dd]\arrow[rd] & \\
     & T\arrow[rr] & & C_X\arrow[dd] \arrow[rr] & & \mathcal{U}_{g,n}\arrow[dd] \\
     & & S_0\arrow[rd] \arrow[rr] & & S\arrow[rd] & \\
     & & & X \arrow[rr] & & \mathcal{M}_{g,n} 
\end{tikzcd}\]
     
    For any map $u:S \to \mathcal{M}_{g,n}$, we can draw the above diagram, where each square is a pullback square. Since $T\subseteq C_X$ does not meet any section of $C_X\to X$, $S'\subseteq C_0$ also does not meet any section of $C_0\to S_0$. By \cref{prop:equiv} (3) and the proof of it, $p:S'\to S$ satisfies \cref{conj:mconj}. Since $X\to \mathcal{M}_{g,n}$ is finite flat and surjective, $S_0\to S$ too. Since $S'\to S_0$ is a base change of $T\to X$, this is also a  finite flat surjective map. Hence $p:S' \to S$ is a finite flat surjective map that satisfies \cref{conj:mconj}.
\end{proof}

In \cref{prop:general}, (1) suggests an important strategy of this paper: Instead of proving the lifting conjectures for every single family of curves, we aim to solve it `universally'. We will describe this approach in more detail in \cref{sec:LNS}.

\begin{rmk}\label{rmk:stack}
    \cref{prop:general} (3) is essentially equivalent to the following statement: The morphism $\mathcal{M}_{g,n+1}\to \mathcal{M}_{g,n}$ is an epimorphism of stacks on the category of $k$-schemes endowed with the finite flat topology.
\end{rmk}

\subsection{Applications of the lifting conjectures}\label{sec:App1Proof}
The idea of constructing a complete subvariety of $M_g$ can be described as follows: If one has a complete nondegenerate $d$-dimensional family $C\to S$ of smooth curves, this induces a complete non-degenerate $d+1$-dimensional family $C\times_S C\to C$ of curves with a marked point, which is given by the diagonal map. By \cref{conj:mconj} we can lift this to a complete nondegenerate $d+1$-dimensional family of curves with $n$ marked points, where the Kodaira construction can be applied.

Here, we apply this idea to prove the following results:
\begin{thm}\label{thm:mapp}
    Assume that \cref{conj:mconj} holds. Then
    \begin{enumerate}
        \item $r(g,n)=r(g)+1$ for all $n\ge 1$.
        \item Moreover, for any positive integer $d$, assume $g\ge 3\cdot 2^{d-1}$ if $\text{char }K\ne 2$, and $g\ge \frac{5\cdot 3^{d-1}+1}{2}$ if $\text{char }K=2$. Then $M_g$ contains a $d$-dimensional complete subvariety. 
    \end{enumerate}
\end{thm}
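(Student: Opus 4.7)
For part (1), the upper bound $r(g,n)\le r(g)+1$ follows inductively from the inequalities in \cref{Pthm:Results2}(1) starting from $r(g,1)=r(g)+1$. For the lower bound I induct on $n$, with base case $n=1$ from \cref{Pthm:Results2}(1). Given a complete subvariety $T\subseteq\rm{M}_{g,n}$ of dimension $r(g)+1$, I first replace it by a smooth complete variety $S$ finite over $T$ equipped with a classifying morphism $u:S\to\mathcal{M}_{g,n}$; this is possible by pulling back the abelian level cover of \cref{lem:covmod}(1) to $T$. Applying \cref{conj:mconj} to $u$ produces a proper surjective $p:S'\to S$ and $u':S'\to\mathcal{M}_{g,n+1}$ with $\pi\circ u'=u\circ p$. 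Since $S'$ is proper, the image of $u'$ in $\rm{M}_{g,n+1}$ is closed, hence a complete subvariety; and the identity $\pi\circ u'=u\circ p$ forces its dimension to be at least $\dim u(S)=r(g)+1$, completing the induction. Equality then follows from the matching upper bound.

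For part (2), I induct on $d$. The base case $d=1$ is \cref{Pthm:results}(1) (both thresholds evaluate to $3$, and $r(3)\ge\tilde r(3)\ge 1$). For the inductive step in characteristic $\ne 2$, suppose $g=3\cdot 2^{d-1}$ satisfies $r(g)\ge d$, and let $f:C\to S$ be a complete $d$-dimensional family of smooth genus $g$ curves with $u:S\to\mathcal{M}_g$ generically finite onto its $d$-dimensional image. The pulled-back family $\operatorname{pr}_1:C\times_S C\to C$ equipped with the diagonal section defines a classifying morphism $C\to\mathcal{M}_{g,1}$ whose image in $\rm{M}_{g,1}$ is $(d+1)$-dimensional. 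Applying part (1) iteratively lifts this to a complete $(d+1)$-dimensional family of genus $g$ curves with $2$ marked points (over a proper surjective base change). The double-cover case of \cref{Kthm:kodaira} then produces a complete $(d+1)$-dimensional family of genus $2g+1-1=2g=3\cdot 2^{d}$ curves, so $r(3\cdot 2^{d})\ge d+1$. The characteristic-$2$ case is parallel, using triple covers applied directly to the single diagonal marked point: this sends genus $g$ to genus $3g+1-2=3g-1$, and the recurrence $g_{d+1}=3g_{d}-1$ with $g_{1}=3$ solves to $g_{d}=(5\cdot 3^{d-1}+1)/2$.

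The main obstacle is the dimension-preservation step after applying \cref{Kthm:kodaira}: one must verify that the resulting classifying morphism to $\mathcal{M}_{g'}$ is still generically finite onto a $(d+1)$-dimensional image, i.e., that the cover family is not isotrivial on positive-dimensional loci. This follows from the observation that, generically, double (resp.\ triple) covers of a fixed curve ramified along distinct branch divisors are non-isomorphic, so variation of the underlying $n$-pointed curve in $(d+1)$ parameters forces the cover to vary in $(d+1)$ parameters as well. A secondary, routine, technicality is the passage from the coarse space $\rm{M}_{g,n}$ to the stack $\mathcal{M}_{g,n}$, handled uniformly by pulling back the abelian level cover of \cref{lem:covmod}(1) whenever a classifying map to the stack is needed.
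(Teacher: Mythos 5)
Your part (1) is fine and matches the paper's (terser) argument: the upper bound from the affineness of $\mathcal{M}_{g,n+1}\to\mathcal{M}_{g,n}$ together with \cref{Pthm:Results2}(1), and the lower bound by lifting a maximal complete subvariety via \cref{conj:mconj} and noting that properness of $p$ makes the image of $u'$ a complete subvariety of dimension at least $\dim u(S)$.

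Part (2), however, has a genuine gap: your induction only produces complete $d$-dimensional subvarieties for the \emph{specific} genera solving the recurrences $g_{d+1}=2g_d$ (resp.\ $g_{d+1}=3g_d-1$), i.e.\ $g=3\cdot 2^{d-1}$ exactly (resp.\ $g=(5\cdot 3^{d-1}+1)/2$ exactly), whereas the theorem asserts the conclusion for \emph{every} $g\ge 3\cdot 2^{d-1}$ (resp.\ every $g\ge(5\cdot 3^{d-1}+1)/2$). There is no monotonicity of $r(g)$ in $g$ that would let you pass from the threshold value to larger genera, so the intermediate genera are simply not covered by your argument. The fix --- and the point of the phrase ``applying \cref{conj:mconj} iteratively'' in the paper --- is to lift the family not just once to $2$ marked points but $2n$ times, obtaining a complete nondegenerate $(d+1)$-dimensional family of genus $3\cdot 2^{d-1}$ curves with $2n$ disjoint marked points for arbitrary $n\ge 1$; the double-cover Kodaira--Parshin construction then yields genus $2g+n-1=3\cdot 2^{d}+n-1$, and letting $n$ range over all positive integers sweeps out every genus $\ge 3\cdot 2^{d}$. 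The same iteration with $n$ marked points and triple covers (genus $3g+n-2$) handles the characteristic-$2$ range. Your closing remarks on nondegeneracy of the covering family and on passing between coarse space and stack via the level cover are reasonable and consistent with what the paper leaves implicit.
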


\begin{proof}
    (1) follows directly from \cref{conj:mconj} and the affineness (resp. properness) of the map $\mathcal{M}_{g,n+1}\to \mathcal{M}_{g,n}$ (resp. $\mathcal{M}_{g,1}\to \mathcal{M}_{g}$).

    (2) As the argument is essentially the same, we only prove it for $\text{char }K\ne 2$. Use induction on $d$. If $d=1$, as mentioned in \cref{Pthm:results}, this follows from the Satake compactification. Assume this holds for $d$. Let $S\to \mathcal{M}_{3\cdot 2^{d-1}}$ be a complete nondegenerate $d$-dimensional family of curves that correspond to such a subvariety of $M_{3\cdot 2^{d-1}}$. Then the corresponding family of curves $C\to S$ defines a complete nondegenerate $d+1$ dimensional family of the genus $3\cdot 2^{d-1}$ with a marked point. Applying \cref{conj:mconj} iteratively, we obtain a complete non-degenerate $d+1$ dimension family $C_n\to \mathcal{M}_{3\cdot 2^{d-1}, 2n}$ of genus $3\cdot 2^{d-1}$ with $2n$ marked points. Applying \cref{Kthm:kodaira} to $C_n$, we obtain a complete non-degenerate $d+1$ dimensional family of genus $3\cdot 2^{d}+n-1$. This proves the theorem. 
\end{proof}

\begin{rmk}\label{rmk:Zaal}
    Note that $g=\frac{5\cdot 3^{d-1}+1}{2}$ case of \cref{thm:mapp} is already known, as stated in \cite[Remark 1.13]{Za05}. However, the original contribution of \cref{thm:mapp} is that it holds in the characteristic $2$ case for $g\ge\frac{5\cdot 3^{d-1}+1}{2}$. Proving this requires the use of lifting theorems, such as \cref{conj:mconj}. 
\end{rmk}

\begin{thm}\label{thm:mapp2}
    Assume that \cref{conj:mconj2} holds. Then $\Tilde{r}(g,n)=\Tilde{r}(g)+1$ for all $n\ge 1$. In particular, for $g\ge 3$ and $n\ge 1$, any point in $\rm{M}_{g,n}$ is contained in a complete subsurface. 
\end{thm}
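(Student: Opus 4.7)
My strategy is induction on $n \ge 1$, with the Strong Lifting Conjecture \ref{conj:mconj2} supplying the decisive inductive step. The upper bound $\tilde{r}(g,n) \le \tilde{r}(g)+1$ for $n \ge 1$ is immediate from Prior Results \ref{Pthm:Results2}(1): iterating $\tilde{r}(g,n+1) \le \tilde{r}(g,n)$ reduces to the equality $\tilde{r}(g,1) = \tilde{r}(g)+1$. All the content lies in the matching lower bound.

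For the lower bound, the base case $n=1$ uses only properness of $\pi \colon \mathcal{M}_{g,1} \to \mathcal{M}_g$: a general point $x \in \mathcal{M}_{g,1}$ has general image in $\mathcal{M}_g$, so lies in some $\pi^{-1}(V_0)$ where $V_0 \subset \mathcal{M}_g$ is complete of dimension $\tilde{r}(g)$, and $\pi^{-1}(V_0)$ is then complete of dimension $\tilde{r}(g)+1$. For the inductive step, let $x \in \mathcal{M}_{g,n+1}$ be general. Since $\pi \colon \mathcal{M}_{g,n+1} \to \mathcal{M}_{g,n}$ is dominant, $\pi(x)$ is general in $\mathcal{M}_{g,n}$, so by induction there is a complete subvariety $V \subset \mathcal{M}_{g,n}$ of dimension $\tilde{r}(g)+1$ through $\pi(x)$. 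Applying \cref{conj:mconj2} to the inclusion $u \colon V \hookrightarrow \mathcal{M}_{g,n}$ and the point $x$ (for which $\pi(x) \in u(V)$ holds by construction), one obtains a variety $S'$, a proper surjective morphism $p \colon S' \to V$, and a morphism $u' \colon S' \to \mathcal{M}_{g,n+1}$ satisfying $\pi \circ u' = u \circ p$ and $x \in u'(S')$. Since $V$ is complete and $p$ is proper, $S'$ is complete, so $u'(S')$ is a complete subvariety of $\mathcal{M}_{g,n+1}$ containing $x$; surjectivity of $p$ combined with the commutative square forces $\pi(u'(S')) = V$, hence $\dim u'(S') \ge \dim V = \tilde{r}(g)+1$, as required.

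For the ``in particular'' assertion, the same induction goes through with any closed point in place of a general one, provided the base case does. But for $g \ge 3$ the Satake-compactification argument recalled after \cref{Pthm:results} shows that \emph{every} point of $\mathcal{M}_g$ lies on a complete subcurve, so by the preimage construction every point of $\mathcal{M}_{g,1}$ lies on a complete subsurface. Iterating the lifting construction then shows every point of $\mathcal{M}_{g,n}$ lies on a complete subvariety of dimension at least $2$, from which a standard cut-down (choose a projective compactification, take general hyperplane sections through the point, and pick an irreducible component) extracts a two-dimensional subvariety through the given point.

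\textbf{Main obstacle.} Granting \cref{conj:mconj2}, the argument is essentially formal; the one non-vacuous verification is the dimension count $\dim u'(S') \ge \tilde{r}(g)+1$, which uses crucially that $p$ is surjective and not merely proper. A minor bookkeeping issue is the translation between a subvariety of the coarse space $\mathrm{M}_{g,n}$ and a morphism to the stack $\mathcal{M}_{g,n}$ required by the statement of \cref{conj:mconj2}; this is handled routinely by pulling back along the finite étale cover of \cref{lem:covmod}(1), after which the proof proceeds as above.
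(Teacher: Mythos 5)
Your proposal is correct and follows essentially the same route as the paper: the upper bound from Prior Results \ref{Pthm:Results2}(1), the base case from properness of $\mathcal{M}_{g,1}\to\mathcal{M}_g$, the inductive lifting of a complete subvariety through the given point via \cref{conj:mconj2}, and the Satake argument for the ``in particular'' clause. You have merely made explicit the dimension count and the coarse-space-versus-stack bookkeeping that the paper leaves implicit.
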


The first assertion also follows from the affineness (resp. properness) of the map $\mathcal{M}_{g,n+1}\to \mathcal{M}_{g,n}$ (resp. $\mathcal{M}_{g,1}\to \mathcal{M}_{g}$) and \cref{conj:mconj2}. The second assertion is then a corollary of \cref{Pthm:results} (1). Note that, by the Satake compactification, any point of $\rm{M}_g$ is contained in a complete subcurve. This and \cref{conj:mconj2} implies the second assertion. In \S \ref{sec:charp}, we will prove \cref{thm:mapp} and \cref{thm:mapp2} holds for positive characteristic.

\section{Line bundles detecting a new section}\label{sec:LNS}

We will reduce \cref{conj:mconj} to a problem of finding line bundles with certain properties.

\begin{defn}\label{defn:dnsline}
Let $f:C\to S$ be a family of smooth curves with $n$ marked points $s_1,\cdots, s_n:S\to C$ over a finite type $K$-scheme $S$. A line bundle $\mathcal{L}$ on $C$ is a \textbf{DNS line bundle} \footnote{abbreviation of `Detecting a New Section'} if there exists a nontrivial effective Cartier divisor $T$ of $C$ such that 
\begin{enumerate}
    \item  $\mathcal{L}^{\otimes m}\simeq \mathcal{O}(T)$, where $\mathcal{O}(T)$ is the dual of the ideal sheaf of $T$.
    \item  The support of $T$ does not intersect with any of $S_1,\cdots, S_n$. 
\end{enumerate}
\end{defn}

Note that $\mathcal{L}$ is a DNS line bundle if and only if $\mathcal{L}^{\otimes m}$ is for some $m>0$. This definition is simply a translation of the notion of a `new section' into the language of line bundles.

By \cref{prop:equiv} (3), proving \cref{conj:mconj} for $f:C\to S$ is essentially equivalent to finding a DNS line bundle $\mathcal{L}$ on $C$. 

\begin{prop}\label{prop:dnsconj}
    If $S$ is a variety, then the existence of a DNS line bundle implies \cref{conj:mconj}. Moreover, if $S$ is smooth, then the converse holds.
\end{prop}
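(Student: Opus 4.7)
The plan is to use Proposition~\ref{prop:equiv} as the bridge: it already translates Conjecture~\ref{conj:mconj} into the existence of a codimension one closed subscheme $T\subseteq C$ disjoint from the sections, which is essentially the geometric content of being a DNS line bundle.

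For the forward direction, assume $S$ is a variety and $\mathcal{L}$ is a DNS line bundle on $C$ with associated nontrivial effective Cartier divisor $T$ satisfying $\mathcal{L}^{\otimes m}\simeq \mathcal{O}(T)$ and $\mathrm{supp}(T)\cap S_i=\emptyset$ for all $i$. A nontrivial effective Cartier divisor on the variety $C$ is supported on a closed subscheme of pure codimension one, and by hypothesis it avoids each $S_i$. Hence the datum $T$ verifies condition (3) of Proposition~\ref{prop:equiv}, so \cref{conj:mconj} holds for the induced morphism $u:S\to\mathcal{M}_{g,n}$. This step is essentially a translation of definitions.

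For the converse, suppose $S$ is smooth and \cref{conj:mconj} holds for $u$. By Proposition~\ref{prop:equiv}(3) there exists a codimension one closed subscheme $T\subseteq C$ disjoint from the $S_i$'s; tracing through the proof of (2)$\Rightarrow$(3), I may take $T$ to be the scheme-theoretic image of a new section from a variety $S'$, hence integral. Since $f:C\to S$ is smooth (it is a family of smooth curves) and $S$ is smooth, $C$ itself is smooth. On the smooth variety $C$, every integral codimension one subscheme is a Cartier divisor, so $\mathcal{O}_C(T)$ is a line bundle. Setting $\mathcal{L}:=\mathcal{O}_C(T)$ and $m:=1$ then exhibits a DNS line bundle.

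There is no substantial obstacle here: the heavy lifting was already done in Proposition~\ref{prop:equiv}. The only nontrivial point in the converse is to promote the Weil divisor $T$ to a Cartier divisor, and smoothness of $S$ (hence of the total space $C$) is precisely what makes this automatic via factoriality. Without smoothness of $S$, an integral codimension one subscheme of $C$ need not be Cartier, which explains why this hypothesis appears only in the converse direction.
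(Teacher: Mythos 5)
Your proof is correct and follows essentially the same route as the paper: both directions go through Proposition~\ref{prop:equiv}(3), with the forward direction being a translation of definitions and the converse using smoothness of $S$ (hence of $C$) to promote the codimension-one subscheme $T$ to an effective Cartier divisor, yielding $\mathcal{L}=\mathcal{O}_C(T)$. Your extra care in taking $T$ integral (as the scheme-theoretic image of the new section) is a reasonable way to make the Cartier promotion explicit; the paper leaves this implicit.
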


The first assertion holds by \cref{prop:equiv} (3). If \cref{prop:equiv} (3) holds for smooth variety $S$, then $T$ is an effective Cartier divisor by the smoothness of $C$, hence induces a DNS line bundle. The smoothness of $S$ is not a serious assumption, by \cref{prop:general} (2).

Note that in \cite{Za99}, Zaal essentially proved \cref{Pthm:Zathm} by showing that on the family of smooth curves $C\times_S C\to C$ with a marked point given by the diagonal map $\Delta:C\to C\times_S C$, associated to a family of smooth curve $C\to S$ over a proper smooth curve $S$, $\pi_1^\ast\Omega_{C/S}^1\otimes\pi_2^\ast\Omega_{C/S}^1 \otimes \mathcal{O}(2\Delta)$ is a DNS line bundle on $C\times_S C$.

Before beginning to search for DNS line bundles, we will first list some facts about them. 

\begin{prop}\label{prop:dnsprop}
    Let $f:C\to S$ and $s_i:S\to C$ as above. Assume $S$ is proper. If $s_i^\ast\mathcal{L}$ is trivial for all $1\le i\le n$ and $\mathcal{L}$ is semiample, then $\mathcal{L}$ is a DNS line bundle.
\end{prop}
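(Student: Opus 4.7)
The plan is to build the required divisor $T$ as the pullback of a general hyperplane under the morphism to projective space defined by a sufficiently high power of $\mathcal{L}$. Concretely, since $\mathcal{L}$ is semiample, I would pick $m \ge 1$ so that $\mathcal{L}^{\otimes m}$ is globally generated and consider the induced morphism
\[
\phi \colon C \longrightarrow \mathbb{P}\bigl(H^0(C,\mathcal{L}^{\otimes m})^{\vee}\bigr) = \mathbb{P}^N, \qquad \phi^{\ast}\mathcal{O}(1) \cong \mathcal{L}^{\otimes m}.
\]
The divisor $T$ will be $\phi^{-1}(H)$ for a judiciously chosen hyperplane $H$, which automatically satisfies $\mathcal{O}(T) \cong \mathcal{L}^{\otimes m}$.

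The key step is to show that $\phi$ contracts each section $S_i$ to a point. Pulling sections of $\mathcal{L}^{\otimes m}$ back along $s_i$ gives a linear map
\[
H^0(C,\mathcal{L}^{\otimes m}) \longrightarrow H^0(S, s_i^{\ast}\mathcal{L}^{\otimes m}) = H^0(S,\mathcal{O}_S),
\]
where the last equality uses that $s_i^{\ast}\mathcal{L}$, hence its $m$-th power, is trivial. Because $S$ is a proper variety, $H^0(S,\mathcal{O}_S) = K$, so every section of $\mathcal{L}^{\otimes m}$ restricts to a constant along $S_i$. This is exactly the statement that $\phi|_{S_i}$ is a constant map to some point $p_i \in \mathbb{P}^N$.

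Now I would choose a hyperplane $H \subset \mathbb{P}^N$ avoiding the finite set $\{p_1,\dots,p_n\}$; since $K$ is algebraically closed and hence infinite, a generic $H$ works (whenever $N \ge 1$, i.e., whenever $\mathcal{L}^{\otimes m}$ is nontrivial, which one may arrange by passing to a higher power if needed). Setting $T := \phi^{\ast}H$, the support of $T$ is disjoint from each $S_i$ because $\phi(S_i) = p_i \notin H$, while $\mathcal{O}(T) = \phi^{\ast}\mathcal{O}(1) = \mathcal{L}^{\otimes m}$ by construction. Finally, $T$ is a nontrivial effective divisor because $\phi$ is nonconstant on $C$ (otherwise every power of $\mathcal{L}$ would be trivial, contradicting $N \ge 1$), so any hyperplane in $\mathbb{P}^N$ meets the image.

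The main obstacle is really just the contraction step: once one knows that $\phi$ collapses every $S_i$, the rest is a routine Bertini-style hyperplane-avoidance argument. The properness hypothesis on $S$ is essential here, since without it $H^0(S,\mathcal{O}_S)$ can be large and the sections of $\mathcal{L}^{\otimes m}$ need not restrict to constants on $S_i$, ruining the contraction.
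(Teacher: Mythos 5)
Your proposal is correct and follows essentially the same route as the paper: choose $m$ with $\mathcal{L}^{\otimes m}$ base-point free, observe that properness of $S$ together with triviality of $s_i^{\ast}\mathcal{L}$ forces the induced morphism to projective space to contract each $S_i$ to finitely many points, and pull back a hyperplane avoiding those points. The only cosmetic difference is that the paper phrases the contraction as ``constant on each connected component'' (since $S$ need only be a proper finite type $K$-scheme, not necessarily a connected variety), and, like you, it silently sets aside the degenerate case where every power of $\mathcal{L}$ is trivial.
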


\begin{proof}
Choose $m$ such that $\mathcal{L}^{\otimes m}$ is base point free and let $h: C\to \mathbb{P}^k$ be the corresponding morphism. Since $s_i^\ast\mathcal{L}$ is trivial and $S$ is proper, $h|_{S_i}$ is constant on each connected component. Therefore, the image $h\left(\cup_i S_i \right)$ is a set of finitely many points. Hence by taking a hyperplane section that does not contain $h\left(\cup_i S_i \right)$, we can construct a nontrivial effective Cartier divisor equivalent to $\mathcal{L}^{\otimes m}$, satisfying the condition for the DNS line bundle.
\end{proof}

\begin{rmk}\label{rmk:dnsuse}
    One can prove the first assertion of \cref{thm:themain} using \cref{prop:dnsprop}, and consequently establish \cref{thm:mainthm1} and \cref{thm:mainthm2}. In particular, it is not necessary to employ a further theory of DNS line bundles. By applying \cref{thm:omegasemi} and performing calculations similar to those in \cref{prop:linecal}, we can demonstrate that $\Omega_{C/S}^1(S_1+\cdots+S_n)$ satisfies the assumptions of \cref{prop:dnsprop} when $\text{char }K\ne 0$. Therefore, if $\text{char }K\ne 0$, we can establish \cref{conj:mconj} for proper varieties $S$. To demonstrate this for general varieties, we need to use a compactification process, as in the proof of \cref{thm:seminds}. We can utilize the proof of \cref{thm:conj2prf} with $\Omega_{C/S}^1(S_1+\cdots+S_n)$ to establish \cref{conj:mconj2}, again using \cref{thm:omegasemi}. However, in this case, the `universal' approach makes the proof much simpler since the exceptional locus of a line bundle over a family of curves may be complicated due to degeneration, in contrast to the result in \cref{thm:omegasemi}. Note that, to prove this for general varieties, we also require a compactification process.

    Nevertheless, it is still necessary to develop the theory of DNS line bundles for several reasons. Firstly, this is essential for proving the second assertion of \cref{thm:themain}. Secondly, it enables us to take a systematic approach to lifting conjectures, even in characteristic $p$. For instance, \cref{thm:seminds} and \cref{thm:conj2prf} highlight the fundamental components of the proof of the lifting conjecture. Thirdly, this notion, especially the concept of the universal DNS line bundle (\cref{defn:univdns}), establishes a connection between the problem of complete subvarieties and the line bundles on the moduli space of curves. For example, see 
    \cref{prob:level}, \cref{conj:onlyhope}, \cref{thm:psisemiconcl}, and the discussions that follow them.
\end{rmk}

\begin{prop}\label{prop:dnspullback}
    Let $f:C\to S$ be a family of smooth pointed curves over a finite type $K$-scheme $S$ and $p:S'\to S$ be a morphism between finite type $K$-schemes, and $f':C'\to S'$ be the pullback of $f$ by $p$, and $p':C'\to C$ be the induced map between the family of curves.
    \begin{enumerate}
        \item If $\mathcal{L}$ is a DNS line bundle, then $(p')^\ast\mathcal{L}$ is also a DNS line bundle.
        \item Moreover, assume that $p$ is finite,  surjective, flat and $S$ is normal. Then the converse of (1) holds.
    \end{enumerate} 
\end{prop}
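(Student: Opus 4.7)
The plan is to treat the two implications separately: (1) is a direct pullback argument, while (2) invokes the norm map of a finite flat morphism to push an effective Cartier divisor from $C'$ down to $C$.

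For (1), given a witness $T$ on $C$ with $\mathcal{L}^{\otimes m}\simeq\mathcal{O}_C(T)$ and $T\cap S_i=\emptyset$ for all $i$, I would take $T':=(p')^{-1}(T)$ scheme-theoretically. Because $f':C'\to S'$ is smooth (as the base change of $f$) and $T$ does not contain any irreducible component of $C$ (a hypersurface cannot contain a fiber-dominating section it also misses), I would check that $T'$ is an effective Cartier divisor whose defining ideal sheaf is the pullback of $\mathcal{I}_T$, giving $\mathcal{O}_{C'}(T')\simeq(p')^{*}\mathcal{O}_C(T)\simeq((p')^{*}\mathcal{L})^{\otimes m}$. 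Disjointness $T'\cap S_i'=\emptyset$ then follows immediately from the identity $S_i'=(p')^{-1}(S_i)$ coming from the Cartesian square $C'=C\times_S S'$.

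For (2), the extra hypotheses make $p':C'\to C$ finite locally free of some degree $d$, and (using smoothness of $C\to S$ together with normality of $S$) make both $C$ and $C'$ normal. The plan is to apply the norm map $\Nm:\mathrm{Pic}(C')\to\mathrm{Pic}(C)$ attached to $p'$. The two key properties I would invoke are $\Nm\circ(p')^{*}=[d]$ and the fact that the norm of an effective Cartier divisor $D'$ is an effective Cartier divisor $\Nm(D')$ whose support equals $p'(\mathrm{Supp}(D'))$. Applying $\Nm$ to an isomorphism $((p')^{*}\mathcal{L})^{\otimes m}\simeq\mathcal{O}_{C'}(T')$ witnessing the DNS property of $(p')^{*}\mathcal{L}$ yields $\mathcal{L}^{\otimes md}\simeq\mathcal{O}_C(\Nm(T'))$, and $\Nm(T')$ is nontrivial (as $p'$ is surjective and $T'$ is nontrivial), effective Cartier, and disjoint from every $S_i$ (since $T'\cap(p')^{-1}(S_i)=\emptyset$ forces $p'(\mathrm{Supp}(T'))\cap S_i=\emptyset$). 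This exhibits the DNS structure on $\mathcal{L}$ with exponent $md$ and witness divisor $\Nm(T')$.

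The main technical obstacle is part (2): justifying carefully that the norm map behaves as claimed on effective Cartier divisors. This is standard for finite locally free morphisms between sufficiently well-behaved schemes, and the normality hypothesis on $S$ is exactly what ensures $C$ is normal so that the identification of $\Nm(\mathcal{O}_{C'}(T'))$ with $\mathcal{O}_C(\Nm(T'))$ as an effective Cartier divisor is clean. Part (1) has the minor subtlety that one must verify $T'$ does not acquire embedded or extraneous components, but this is handled by the smoothness of $f'$ and the fact that $T$ meets each fiber of $f$ properly (forced by $T$ missing the dominating sections $S_i$).
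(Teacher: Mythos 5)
Your proposal is correct, and part (2) takes a genuinely different (though parallel) route from the paper. For part (1) you do essentially what the paper does, but you actually justify the step the paper passes over in one clause ("we can pull back the effective Cartier divisor in this case"): since $T$ misses the sections it cannot contain any fiber of the smooth curve fibration $f$, so $T$ is a \emph{relative} effective Cartier divisor over $S$ by the slicing criterion, and hence pulls back to an effective Cartier divisor under an arbitrary base change $p$ --- this is exactly the point, and it is worth making explicit since $p$ is not assumed flat in (1). For part (2) the paper descends the divisor at the level of cycles: it writes $c_1(\mathcal{L})$ as a Weil divisor class, applies proper pushforward $p'_\ast$ in $\mathrm{CH}^1$, uses the projection formula $p'_\ast(p')^\ast=[d]$, and then uses normality of $C$ together with injectivity of $\operatorname{Pic}(C)\to\mathrm{CH}^1(C)$ to recognize $p'_\ast[T']$ as an effective Cartier divisor representing $\mathcal{L}^{\otimes md}$. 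You instead work entirely at the level of line bundles via the norm $\Nm$ of the finite locally free morphism $p'$, using $\Nm\circ(p')^\ast=[d]$ and the fact that the norm of an effective Cartier divisor is an effective Cartier divisor supported on $p'(\operatorname{Supp} T')$. These are two packagings of the same descent; yours avoids the Chow-group detour, while the paper's avoids having to verify that $\Nm$ of a nonzerodivisor is a nonzerodivisor (which in both treatments is ultimately where integrality/normality of $C$, inherited from normality of $S$ via smoothness of $f$, is used). One small inaccuracy: $C'$ need not be normal, since $S'$ is only finite flat over $S$ and need not be normal; but this is harmless, as the norm map and its properties only require $p'$ finite locally free and $C$ integral.
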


\begin{proof}
    (1) Choose $m>0$ such that $\mathcal{L}^{\otimes m}\simeq \mathcal{O}(T)$, $T$ does not intersect with $S_i$'s. Then 
    \[ (p')^\ast\mathcal{L}^{\otimes m}\simeq (p')^\ast\mathcal{O}(T)=\mathcal{O}\left((p')^\ast (T) \right)  \]
    Since we can pull back the effective Cartier divisor in this case. The support of $(p')^\ast (T)$ does not intersect with the pullback of $S_i$'s, hence $(p')^\ast\mathcal{L}$ is a DNS line bundle.

    (2) Let $s_i'$ be the pullback of $s_i$ and $d$ be the degree of $p$. By assumption, there exists $m>0$ such that $(p')^\ast\mathcal{L}^{\otimes m}\simeq \mathcal{O}(T')$ for some effective Cartier divisor $T'$ and the support of $T'$ does not intersect with $S_i'$, $1\le i\le n$. Let $c_1\left(\mathcal{L}\right)=\sum_i n_i T_i \in \text{CH}^1(C)$ where $T_i$ are prime Weil divisors. Then $(p')^\ast\mathcal{L}^{\otimes m}\simeq \mathcal{O}(T')$, so by taking their associated Weil divisor, $[T']$ and $\sum_i mn_i\cdot (p')^\ast [T_i]$ represent the same element in $\text{CH}^1(C')$. Take $p'_\ast$ from each divisor class. This then implies that $\sum_i dmn_i [T_i]$ and $p'_\ast [T]$ are rationally equivalent as Weil divisors. Note that $C$ is a normal scheme. Since $\sum_i dmn_i [T_i]$ is locally principal, $p'_\ast [T]$ is also a locally principal effective Weil divisor, hence $p'_\ast [T]$ is an effective Cartier divisor. The natural map $\text{Pic}(C)\to \text{CH}^1(C)$ is injective, hence by taking the associated line bundles, we find that $\mathcal{L}^{\otimes md}$ is equivalent to $\mathcal{O}\left(p'_\ast [T]\right)$. Note that $p'_\ast [T]$ is an effective Cartier divisor supported on $p'(T)$. Since the support of $T$ does not intersect with the $S_i'$'s, the support of $p'(T)$ does not intersect with the $S_i$'s. Hence $\mathcal{L}$ is a DNS line bundle.
\end{proof}

As mentioned in \cref{prop:dnsconj}, proving \cref{conj:mconj} for $f: C\to S$ is essentially equivalent to finding DNS line bundles on $S$. Hence, the first step in solving the conjecture should be to search for possible candidates for DNS line bundles. For a specific family of curves $C\to S$, there may be many line bundles on $C$, and some of them may be DNS. However, we need to find a DNS line bundle for every such family of curves. Hence, the natural choice is a line bundle that comes from the universal curve $\mathcal{U}_{g,n}$ over $\mathcal{M}_{g,n}$. This idea suggests the following definition.

\begin{defn}\label{defn:univdns}
    A line bundle $\mathcal{L}\in \text{Pic}(\mathcal{U}_{g,n})$ is a \textbf{universal DNS line bundle} if for any pullback diagram
    \[\begin{tikzcd}
    C \arrow[r, "v"]\arrow[d, "f"]& \mathcal{U}_{g,n}\arrow[d, "\pi"]\\
    S  \arrow[r, "u"]& \mathcal{M}_{g,n}
\end{tikzcd}\]
    corresponds to a family of smooth curves $C\to S$ over a normal variety $S$, $v^\ast\mathcal{L}$ is a DNS line bundle. $\mathcal{L}$ is a \textbf{weakly universal DNS line bundle} if the same holds for every proper normal variety $S$.
\end{defn}

We have included the normality condition in order to make use of \cref{prop:dnspullback} (2). This is just a technical requirement. We could refine the definition of the DNS line bundle to make \cref{prop:dnspullback} (2) work even when $S$ is not normal. However, this would add unnecessary complexity to both the definition and subsequent arguments (including both Weil and Cartier divisors). Therefore, we have decided to stick with the current definition.

By \cref{prop:dnsconj} and the finiteness of the normalization map for varieties, the existence of a universal DNS line bundle (resp. weakly universal line bundle) implies \cref{conj:mconj} (resp. \cref{conj:mconj} for proper varieties). The (weakly) universal DNS line bundle can be regarded as a geometric mechanism that finds a new section of any family of curves over a (proper) variety. Therefore, our goal is to find a universal DNS line bundle. Fortunately, we know the Picard group of $\mathcal{U}_{g,n}$.

We recall several line bundles over $\overline{\mathcal{M}}_{g,n}$. To define a line bundle over $\overline{\mathcal{M}}_{g,n}$, it is enough to functorially define a line bundle on $S$ for each family $f: C\to S$ of stable curves of genus $g$ with $n$ marked points $s_1,\cdots, s_n$. The \textbf{Hodge line bundle} $\lambda$ is defined by $\text{det} f_\ast \Omega_{C/S}^1$. The \textbf{psi claseses} $\psi_1,\cdots, \psi_n$ are defined by $\psi_i=s_i^\ast \Omega_{C/S}^1$. Moreover, there are line bundles $\delta_S$ on $\overline{\mathcal{M}}_{g,n}$ defined by boundary divisors, called \textbf{boundary classes}. Here, $S$ is either `irr' or a pair $(a,P)$ of a number $0\le a\le g$ and a subset $P$ of $\left\{1,\cdots, n \right\}$. They correspond to the topological types of singular stable curves of genus $g$. See \cite{AC87, AC09} for a detailed explanation of these classes and the relations between them. Also, note that the universal curve $\overline{\mathcal{U}}_{g,n}$ over $\overline{\mathcal{M}}_{g,n}$ is isomorphic to $\overline{\mathcal{M}}_{g,n+1}$, and $\mathcal{U}_{g,n}$ is an open substack of $\overline{\mathcal{M}}_{g,n+1}$ is the complement of boundary except the components correspond to $\delta_{0,\left\{1,n+1\right\}},\cdots, \delta_{0,\left\{n,n+1\right\}}$. This will be used multiple times to examine the properties of line bundles.

\begin{thm}\cite[Theorem 2]{AC87}\label{thm:acline}
    Assume $g\ge 3$. Then $\text{Pic}(\mathcal{M}_{g,n})$ is  freely generated by $\lambda, \psi_1,\cdots, \psi_n$. $\text{Pic}(\overline{\mathcal{M}}_{g,n})$ is freely generated by $\lambda, \psi_1,\cdots, \psi_n$ and $\delta_S$'s.
\end{thm}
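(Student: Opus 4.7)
The statement is the Arbarello--Cornalba computation of the Picard group of $\mathcal{M}_{g,n}$ and $\overline{\mathcal{M}}_{g,n}$ for $g\ge 3$, cited here as \cite[Theorem 2]{AC87}. The plan is to reduce it to Harer's computation of the second cohomology of the mapping class group, together with a careful bookkeeping of how the boundary and the marked points contribute.

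First, I would dispose of the $n=0$ case. Over $\mathbb{C}$, the identification $\text{Pic}(\mathcal{M}_g)\simeq H^2(\mathcal{M}_g;\mathbb{Z})\simeq H^2(\Gamma_g;\mathbb{Z})$ (using that $\mathcal{M}_g$ is a smooth DM stack with contractible universal cover in the orbifold sense) reduces the claim to Harer's theorem that $H^2(\Gamma_g;\mathbb{Z})=\mathbb{Z}$ for $g\ge 3$, the generator being detected by $\lambda$ via its degree on a test family. To pass to arbitrary algebraically closed $K$, one invokes the standard characteristic-free comparison (via the fact that $\mathcal{M}_g$ is defined over $\text{Spec}\,\mathbb{Z}$ and its Picard group is stable under reduction), at least for almost all primes; the full statement for every characteristic follows from the independent algebraic arguments of Moriwaki and Arbarello--Cornalba.

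Second, for the interior case with $n\ge 1$, I would induct on $n$ using the forgetful morphism $\pi_{n+1}:\mathcal{M}_{g,n+1}\to \mathcal{M}_{g,n}$, whose total space is the open substack of the universal curve obtained by removing the sections $S_1,\ldots,S_n$. Since a smooth projective curve of genus $g\ge 2$ has no nontrivial line bundles on its complement of $n$ points that are not generated by the point classes themselves, the Leray sequence for $\pi_{n+1}$ produces the short exact sequence
\[
0\to \pi_{n+1}^\ast\text{Pic}(\mathcal{M}_{g,n})\to \text{Pic}(\mathcal{M}_{g,n+1})\to \mathbb{Z}\psi_{n+1}\to 0,
\]
which splits by pulling back along $s_{n+1}$. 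Combining this with the inductive hypothesis gives the desired free generation by $\lambda,\psi_1,\ldots,\psi_n$.

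Third, for the compactification, I would exploit the excision sequence
\[
\bigoplus_{S}\mathbb{Z}\cdot\delta_S\xrightarrow{\;\alpha\;} \text{Pic}(\overline{\mathcal{M}}_{g,n})\to \text{Pic}(\mathcal{M}_{g,n})\to 0,
\]
where $S$ ranges over `irr' and pairs $(a,P)$ as in the paper. Surjectivity of the right map, combined with the interior case, yields generation by $\lambda,\psi_i,\delta_S$. Injectivity of $\alpha$, and hence freeness of the whole Picard group, is established by constructing for each boundary stratum a one-parameter test family $B_S\to\overline{\mathcal{M}}_{g,n}$ crossing $\delta_S$ transversally exactly once and missing all other boundary divisors, so that intersection numbers pair the $\delta_S$ to a dual basis. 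The main obstacle is this last step: one must produce enough genuinely different test families to separate every boundary class from every other boundary class and from $\lambda,\psi_i$. This requires explicit nodal degenerations (elliptic tails, genus splittings, attaching chains of rational curves through the marked points) and careful use of Mumford's relation $12\lambda=\kappa_1+\delta$, or its pointed analogue, to ensure that no nontrivial combination of $\delta_S$ can be expressed in terms of $\lambda$ and the $\psi_i$.
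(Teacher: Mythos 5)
The paper does not actually prove this statement: it is quoted, with the citation \cite{AC87}, as an external input, so there is no internal argument to compare yours against. Your sketch reproduces the standard architecture of the Arbarello--Cornalba proof --- Harer's computation of $H^2$ of the mapping class group, the identification $\mathrm{Pic}\cong H^2$ for the stack, induction on $n$ via the forgetful map, and excision plus test families for the boundary classes --- and the first and third parts are essentially right (modulo the facts that Harer's original theorem covers $g\ge 5$, with $g=3,4$ supplied later, and that \cite{AC87} works over $\mathbb{C}$, so the positive-characteristic case this paper actually needs does require the separate arguments you allude to).

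The genuine gap is in your second step. The assertion that a smooth projective curve of genus $g\ge 2$ has no line bundles on the complement of $n$ points beyond those generated by the point classes is false: $\mathrm{Pic}\bigl(C\setminus\{p_1,\dots,p_n\}\bigr)\cong \mathrm{Jac}(C)\big/\langle [p_i]-[p_j]\rangle$, a nontrivial divisible group for $g\ge 1$. So the short exact sequence you extract from the Leray sequence does not follow from any fibrewise statement; the claim that the cokernel of $\pi_{n+1}^\ast$ is generated by $\psi_{n+1}$ is exactly a Franchetta-type theorem (the only globally defined, monodromy-invariant classes in the relative Picard group of the universal punctured curve are the ones built from $\omega_\pi$ and the sections), and that is the hard content of the theorem, not a formal consequence. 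In \cite{AC87} this is not obtained by induction on $n$ at the level of Picard groups but by invoking Harer's computation $H^2(\Gamma_{g,n};\mathbb{Z})\cong\mathbb{Z}^{n+1}$ for all $n$ at once and then exhibiting $\lambda,\psi_1,\dots,\psi_n$ as a basis by pairing against test families. As written, your inductive step assumes the statement it is meant to establish.
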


The following corollary is a direct consequence of \cref{thm:acline}.

\begin{cor}\label{cor:corline}
    Assume $g\ge 3$. Then $\lambda, \psi_1,\cdots, \psi_{n+1}$ and $\delta_{0,\left\{1,n+1\right\}},\cdots, \delta_{0,\left\{n,n+1\right\}}$ freely generate $\text{Pic}(\mathcal{U}_{g,n})$.
\end{cor}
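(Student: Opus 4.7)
The plan is to deduce this from Theorem \ref{thm:acline} by an excision argument for the Picard group. The key ingredient is the identification $\overline{\mathcal{U}}_{g,n}\cong \overline{\mathcal{M}}_{g,n+1}$ recalled just before the corollary, together with the description of $\mathcal{U}_{g,n}$ as the open substack of $\overline{\mathcal{M}}_{g,n+1}$ obtained by removing every boundary divisor \emph{except} the $n$ divisors $\delta_{0,\{i,n+1\}}$ for $1\le i\le n$ (these are precisely the components along which one can degenerate a smooth $n$-pointed curve by sprouting a rational tail containing the $(n+1)$st marked point together with one of the first $n$ points, and therefore still lie over $\mathcal{M}_{g,n}$).

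First, I would invoke Theorem \ref{thm:acline} in the case of $n+1$ marked points: $\operatorname{Pic}(\overline{\mathcal{M}}_{g,n+1})$ is freely generated by $\lambda$, $\psi_1,\dots,\psi_{n+1}$, and the full collection of boundary classes $\delta_S$, where $S$ ranges over $\{\mathrm{irr}\}$ together with all admissible pairs $(a,P)$. Let $B\subset \overline{\mathcal{M}}_{g,n+1}$ denote the union of the boundary divisors that must be removed to obtain $\mathcal{U}_{g,n}$, that is, every boundary divisor other than $\delta_{0,\{1,n+1\}},\dots,\delta_{0,\{n,n+1\}}$. Because $\overline{\mathcal{M}}_{g,n+1}$ is a smooth Deligne--Mumford stack and $B$ is a union of prime divisors, the standard excision sequence yields
\[
\bigoplus_{D\subset B} \mathbb{Z}\cdot [D] \;\longrightarrow\; \operatorname{Pic}(\overline{\mathcal{M}}_{g,n+1}) \;\longrightarrow\; \operatorname{Pic}(\mathcal{U}_{g,n}) \;\longrightarrow\; 0,
\]
where the left map sends each component of $B$ to its associated boundary class.

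Now comes the crucial observation, which makes the argument clean rather than merely formal: by Theorem \ref{thm:acline}, the classes of the divisors in $B$ are part of a free $\mathbb{Z}$-basis for $\operatorname{Pic}(\overline{\mathcal{M}}_{g,n+1})$. Hence the left map in the excision sequence is a split injection onto a direct summand, and passing to the quotient picks out exactly the complementary basis elements, namely $\lambda$, $\psi_1,\dots,\psi_{n+1}$, and the $n$ remaining boundary classes $\delta_{0,\{1,n+1\}},\dots,\delta_{0,\{n,n+1\}}$. These therefore freely generate $\operatorname{Pic}(\mathcal{U}_{g,n})$, as claimed.

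The only non-routine point is to justify the excision sequence in the stacky setting and to verify that the divisors one needs to remove are precisely $B$ as described; neither is a serious obstacle, but the second one requires correctly enumerating the boundary components of $\overline{\mathcal{M}}_{g,n+1}$ that meet a fiber of $\pi$ over a point of $\mathcal{M}_{g,n}$, which is exactly the content of the identification $\overline{\mathcal{U}}_{g,n}\cong \overline{\mathcal{M}}_{g,n+1}$ stated in the paper.
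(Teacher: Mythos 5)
Your proof is correct and is exactly the argument the paper has in mind: it cites \cref{thm:acline} together with the stated identification of $\mathcal{U}_{g,n}$ as the complement in $\overline{\mathcal{M}}_{g,n+1}$ of all boundary divisors except the $\delta_{0,\{i,n+1\}}$, leaving the excision step implicit. Your write-up simply fills in that routine step, correctly noting that quotienting a free basis by a sub-basis leaves the complementary generators free.
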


Now, we will determine which line bundles of $\text{Pic}(\mathcal{U}_{g,n})$ are the candidates for DNS line bundles. The following proposition, proved mainly in \cite{AC09} (see \cref{rmk:psi}), summarizes every calculation we need. 

\begin{prop}\label{prop:linecal}
    \begin{enumerate}
        \item Let $f:C\to S$ be a family of stable curves with $n$ marked points $s_1,\cdots, s_n:S\to C$ over any $K$-scheme $S$. Let 

    \[ \begin{tikzcd}
        C\arrow[r, "v"]\arrow[d, "f"] &  \overline{\mathcal{U}}_{g,n} \arrow[d, "\pi"]\\
            S\arrow[r, "u"] &  \overline{\mathcal{M}}_{g,n}.
    \end{tikzcd} \]

    be the diagram induced from the universal property of $\overline{\mathcal{U}}_{g,n}$. Then
    \begin{align*}
        &v^\ast \lambda=\text{det} f^\ast f_\ast \Omega_{C/S}^1\\
        &v^\ast \psi_i=\mathcal{O}(S_i)\otimes f^\ast s_i^\ast \Omega_{C/S}^1 \text{ for }1\le i\le n\\
        &v^\ast \psi_{n+1}=\Omega^1_{C/S}(S_1+\cdots+S_n)\\
        &v^\ast \delta_{0,\left\{i,n+1\right\}}=\mathcal{O}(S_i).
    \end{align*}

    Note that the third row is equivalent to $\psi_{n+1}\simeq \Omega_{\pi}\left(\overline{S}_1+\cdots+\overline{S}_n \right)$.

    \item Let $\bar{s}_1,\cdots, \bar{s}_n:\overline{\mathcal{M}}_{g,n}\to \overline{\mathcal{U}}_{g,n}$ be universal sections. Then 
    \begin{multicols}{2}
      \begin{enumerate}[itemsep=0.3cm]
        \item $\bar{s}_i^\ast \lambda=\lambda$
        \item $\bar{s}_i^\ast \psi_j=\psi_j  \text{ for }j\ne i,n+1$
        \item $\bar{s}_i^\ast \psi_i=\mathcal{O}_{\mathcal{M}_{g,n}} $
        \item $\bar{s}_i^\ast \psi_{n+1}=\mathcal{O}_{\mathcal{M}_{g,n}}$
        \item $\bar{s}_i^\ast \delta_{0,\left\{j,n+1\right\}}=\mathcal{O}_{\mathcal{M}_{g,n}} \text{ for }j\ne i$
        \item $\bar{s}_i^\ast \delta_{0,\left\{i,n+1\right\}}=-\psi_i$
      \end{enumerate} 
    \end{multicols}
    
    \end{enumerate}
    
\end{prop}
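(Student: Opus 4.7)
The plan is to derive all the identities from two standard tautological identities on $\overline{\mathcal{M}}_{g,n+1}$, via the canonical identification $\overline{\mathcal{U}}_{g,n}\simeq\overline{\mathcal{M}}_{g,n+1}$ under which $\pi=\pi_{n+1}$: the \emph{comparison formula}
\[\psi_i^{(n+1)}=\pi^\ast\psi_i^{(n)}+\delta_{0,\{i,n+1\}}\]
for $1\le i\le n$, and the \emph{dualizing identity}
\[\psi_{n+1}=\omega_\pi\bigl(\bar{S}_1+\cdots+\bar{S}_n\bigr).\]
Both are proved in \cite{AC09} and reflect the fact that colliding the $(n+1)$-st point with the $i$-th forces a genus-$0$ bubble to sprout. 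In addition, I would invoke the identification of $\bar{s}_i:\overline{\mathcal{M}}_{g,n}\to \overline{\mathcal{M}}_{g,n+1}$ as a closed immersion whose image is the main component of $\delta_{0,\{i,n+1\}}$ (so that $\bar{S}_i=\delta_{0,\{i,n+1\}}$ as divisors), and the fact that this image misses $\delta_{0,\{j,n+1\}}$ for $j\ne i$.

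For Part (1), the Hodge bundle formula is immediate: on $\overline{\mathcal{M}}_{g,n+1}$ the class $\lambda$ is $\pi^\ast$ of the one on $\overline{\mathcal{M}}_{g,n}$, determinant commutes with pullback, and $u^\ast\lambda=\det f_\ast\Omega^1_{C/S}$ by definition. The crucial boundary formula $v^\ast\delta_{0,\{i,n+1\}}=\mathcal{O}(S_i)$ follows from observing that a geometric point $c\in C$ lies in $v^{-1}(\delta_{0,\{i,n+1\}})$ exactly when $c=s_i(f(c))$, together with a local analysis near $S_i$ confirming multiplicity one (reducedness of $S_i$ as a Cartier divisor is the key input). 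With this in hand, pulling back the comparison formula by $v$ gives the formula for $v^\ast\psi_i$, and pulling back the dualizing identity gives $v^\ast\psi_{n+1}=\Omega^1_{C/S}(S_1+\cdots+S_n)$, using the base-change property $v^\ast\omega_\pi=\Omega^1_{C/S}$ of the relative dualizing sheaf.

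For Part (2), the linchpin is identity (f), $\bar{s}_i^\ast\delta_{0,\{i,n+1\}}=-\psi_i$. Since $\bar{s}_i$ embeds $\overline{\mathcal{M}}_{g,n}$ as a Cartier divisor in $\overline{\mathcal{M}}_{g,n+1}$, this pullback is the normal bundle of the image, which by the standard deformation-theoretic formula for a node equals the tensor product of tangent lines at the two branches; the main-component side contributes $\psi_i^{-1}$ and the rigid three-pointed rational bubble contributes trivially. Items (a), (b), (e) are routine from $\pi\circ\bar{s}_i=\mathrm{id}$, the comparison formula, and the fact that $\bar{s}_i$ avoids the other boundary components. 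Item (c) follows by applying $\bar{s}_i^\ast$ to the comparison formula and using (f) to cancel. Finally (d) is obtained by pulling back the dualizing identity: the contributions $\bar{s}_i^\ast\omega_\pi=\psi_i$ (standard) and $\bar{s}_i^\ast\mathcal{O}(\bar{S}_i)=-\psi_i$ (a restatement of (f), since $\bar{S}_i=\delta_{0,\{i,n+1\}}$) cancel, while $\bar{s}_i^\ast\mathcal{O}(\bar{S}_j)=\mathcal{O}$ for $j\ne i$ by transversality.

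The main obstacle is the boundary analysis underlying $v^\ast\delta_{0,\{i,n+1\}}=\mathcal{O}(S_i)$ and $\bar{s}_i^\ast\delta_{0,\{i,n+1\}}=-\psi_i$: both reduce to the local picture of the universal deformation of a node and a careful reading of multiplicities. These are classical Arbarello-Cornalba computations, so the proof of \cref{prop:linecal} is essentially a bookkeeping exercise once those two identities and the comparison and dualizing formulas are in hand.
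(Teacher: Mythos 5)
Your proposal is mathematically correct, and once the two input identities are granted, your bookkeeping in both parts checks out (including the disjointness of $\bar{S}_i$ from $\delta_{0,\{j,n+1\}}$ for $j\ne i$, and the normal-bundle computation giving (f)). But it diverges from the paper in one substantive way: you take the dualizing identity $\psi_{n+1}=\Omega_\pi\bigl(\overline{S}_1+\cdots+\overline{S}_n\bigr)$ as a citation, whereas the paper regards proving exactly this identity as the main content of part (1). The paper's Remark~\ref{rmk:psi} explicitly notes that this formula is "frequently referred to" but rarely proven in print (it points to \cite[Theorem 3.21]{Zv12}, not \cite{AC09}, for a proof), and the bulk of the paper's argument is a from-scratch derivation: form the fiber product $\overline{\mathcal{M}}_{g,n+1}\times_{\overline{\mathcal{M}}_{g,n}}\overline{\mathcal{M}}_{g,n+1}$, blow up the intersections of the diagonal with the sections, compare with $\overline{\mathcal{M}}_{g,n+2}$ away from a codimension-$2$ locus, and apply the blow-up formula for relative canonical sheaves to get $\Delta_1^\ast\Omega_{\pi_4}=\Omega_\pi(\overline{S}_1+\cdots+\overline{S}_n)$. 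So your attribution to \cite{AC09} should be corrected, and if the goal is a self-contained proof you would still owe an argument for this identity; as a citation-based proof it is acceptable but outsources the hard step. Your comparison formula $\psi_i=\pi^\ast\psi_i+\delta_{0,\{i,n+1\}}$ and the identity $\pi^\ast\lambda=\lambda$ are indeed in \cite[Lemma 1]{AC09}, matching the paper.

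For part (2) your route is genuinely different and arguably cleaner: you compute directly on the stack, deriving (f) from the normal bundle of the boundary divisor (smoothing the node contributes the tensor product of tangent lines at the two branches, hence $-\psi_i$ from the genus-$g$ side and nothing from the rigid $\overline{\mathcal{M}}_{0,3}$ factor), and then deduce (c) and (d) by pulling back the comparison and dualizing identities. The paper instead pulls everything back along an arbitrary family $u:S\to\overline{\mathcal{M}}_{g,n}$, applies part (1), and uses the classical self-intersection formula $s_i^\ast\mathcal{O}(S_i)=-s_i^\ast\Omega^1_{C/S}$; this has the advantage of reusing part (1) verbatim and avoiding any discussion of normal bundles of boundary divisors on the stack, at the cost of a functoriality argument to conclude the identity of line bundles on $\overline{\mathcal{M}}_{g,n}$ itself. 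Both are valid; your version buys a more intrinsic computation, the paper's buys economy of prerequisites.
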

\begin{proof}
    (1) By \cite[Lemma 1 (i)-(iv)]{AC09} and Mumford's identity $\kappa_1=12\lambda-\delta+\psi$, we have $\pi^\ast\lambda=\lambda$. Therefore, 
    \[ v^\ast\lambda=v^\ast\pi^\ast\lambda=f^\ast u^\ast \lambda =f^\ast\det f_\ast \Omega_{C/S}^1=\det f^\ast f_\ast \Omega_{C/S}^1. \]
    Since $\delta_{0,\left\{i,n+1 \right\}}$ is the image of $\bar{s}_i$, $v^\ast \delta_{0,\left\{i,n+1\right\}}=\mathcal{O}(S_i)$. Also, by \cite[Lemma 1 (ii)]{AC09}, $\psi_i=\pi^\ast\psi_i+\delta_{0,\left\{i,n+1 \right\}}$. Hence
    \[ v^\ast \psi_i=v^\ast\left(\pi^\ast\psi_i+\delta_{0,\left\{i,n+1 \right\}}\right)=f^\ast u^\ast \psi_i \otimes \mathcal{O}(S_i)=\mathcal{O}(S_i)\otimes f^\ast s_i^\ast \Omega_{C/S}^1. \]
    Now we need to prove $\psi_{n+1}\simeq \Omega_{\pi}\left(\overline{S}_1+\cdots+\overline{S}_n \right)$. We will identify $\overline{\mathcal{U}}_{g,n+1}$ with $\overline{\mathcal{M}}_{g,n+1}$.

    Let $\mathcal{Z}_{g,n+1}$ be the closed substack of $\overline{\mathcal{M}}_{g,n+1}$ corresponds to the locus of pointed curves $(C,P_1,\cdots, P_{n+1})$ such that $P_{n+1}$ is contained in a rational component of $C$, which consists of $P_{n+1}$, two nodes, and no other special points. Let $\mathcal{V}_{g,n+1}$ be the open substack which is complement of $\mathcal{Z}_{g,n+1}$. By an elementary dimension calculation, we have that $\mathcal{Z}_{g,n+1}$ is codimension $2$. Let $\overline{\mathcal{M}}_{g,n+1}\times_{\overline{\mathcal{M}}_{g,n}} \overline{\mathcal{M}}_{g,n+1}$ be the fiber product of $\pi: \overline{\mathcal{M}}_{g,n+1}\to \overline{\mathcal{M}}_{g,n}$ with itself. Then $\pi_1: \overline{\mathcal{M}}_{g,n+1}\times_{\overline{\mathcal{M}}_{g,n}} \overline{\mathcal{M}}_{g,n+1}\to \overline{\mathcal{M}}_{g,n+1}$ has sections $\bar{s}_1,\cdots, \bar{s}_n$, which are the base changes of $\bar{s}_1,\cdots, \bar{s}_n:\overline{\mathcal{M}}_{g,n}\to \overline{\mathcal{M}}_{g,n+1}$, and the diagonal map $\Delta$. Let $\mathcal{M}'$ be the blow-up of $\overline{\mathcal{M}}_{g,n+1}\times_{\overline{\mathcal{M}}_{g,n}} \overline{\mathcal{M}}_{g,n+1}$ by the intersections of $\Delta$ and $\bar{s}_i$, for $1\le i\le n$. We have a following description of $\beta:\mathcal{M}'\to \overline{\mathcal{M}}_{g,n+1}$: on $\mathcal{V}_{g,n+1}$, this is the same as the projection map $\overline{\mathcal{M}}_{g,n+2}\to \overline{\mathcal{M}}_{g,n+1}$ and on $\mathcal{Z}_{g,n+1}$, the fiber of this map on $(C,P_1,\cdots, P_{n+1})$ is the stabilization of $(C,P_1,\cdots, P_{n})$. By this description, there exists a natural (partial stabilization) map $\alpha:\overline{\mathcal{M}}_{g,n+2}\to \mathcal{M}'$. All in all, we have the following diagram.
    
    \[\begin{tikzcd}
       \overline{\mathcal{M}}_{g,n+2}\arrow[r, "\alpha"] & \mathcal{M}'\arrow[r, "\beta"] & \overline{\mathcal{M}}_{g,n+1}\times_{\overline{\mathcal{M}}_{g,n}} \overline{\mathcal{M}}_{g,n+1}\arrow[r, "\pi_2"]\arrow[d, "\pi_1"] & \overline{\mathcal{M}}_{g,n+1}\arrow[d, "\pi"] \\
         &  & \overline{\mathcal{M}}_{g,n+1}\arrow[r]\arrow[ul, swap, "\Delta_1"]\arrow[ull, "\Delta_2"] & \overline{\mathcal{M}}_{g,n}
    \end{tikzcd}\]
    where $\Delta_1$ is the proper transformation of $\Delta$, and $\Delta_2$ is the $(n+1)$st section. Let $\pi_3=\pi_1\circ\beta\circ\alpha:\overline{\mathcal{M}}_{g,n+2}\to \overline{\mathcal{M}}_{g,n+1}$ and $\pi_4=\pi_1\circ\beta:\mathcal{M}'\to \overline{\mathcal{M}}_{g,n+1}$ be the natural projections. We will show that $\Delta_1^\ast \Omega_{\pi_4}=\Delta_2^\ast \Omega_{\pi_3}$. Since they are line bundles, it is enough to check that they agree on $\mathcal{V}_{g,n+1}$. Consider the series of maps $\overline{\mathcal{M}}_{g,n+2}\to \mathcal{M'}\to \overline{\mathcal{M}}_{g,n+1}$. As previously mentioned, if we restrict the first map to $\mathcal{V}_{g,n+1}$, then $\alpha$ is an isomorphism. Hence, if we restrict everything to $\mathcal{V}_{g,n+1}$, we have $\Delta_1^\ast \Omega_{\pi_4}=\Delta_2^\ast \Omega_{\pi_3}$. Hence $\Delta_1^\ast \Omega_{\pi_4}=\Delta_2^\ast \Omega_{\pi_3}$ on $\overline{\mathcal{M}}_{g,n+1}$. Since $\psi_{n+1}=\Delta_2^\ast \Omega_{\pi_3}$ by definition, we only need to calculate $\Delta_1^\ast \Omega_{\pi_4}$. Note that, by the formula relating the relative canonical sheaves of blow-ups, we have $\Omega_{\pi_4}=\beta^\ast\Omega_{\pi_1}(E_1+\cdots+E_n)$ where $E_i$ are exceptional divisors correspond to $\Delta\cap \bar{S}_i$. Therefore, $\Delta_1^\ast \Omega_{\pi_4}=\Omega_{\pi}(\overline{S}_1+\cdots+\overline{S}_n)$.

    (2) Note that we have a pullback diagram
    \[ \begin{tikzcd}
        S\arrow[r, "u"]\arrow[d, "s_i"] &  \overline{\mathcal{M}}_{g,n} \arrow[d, "\bar{s}_i"]\\
            C\arrow[r, "v"] &  \overline{\mathcal{U}}_{g,n}.
    \end{tikzcd} \]
    It is enough to prove that this holds after pulling everything back by $u$. In fact, by (1),
    \begin{align*}
        &u^\ast\bar{s}_i^\ast \lambda=s_i^\ast v^\ast\lambda=s_i^\ast\text{det} f^\ast f_\ast \Omega_{C/S}^1=\text{det} f_\ast \Omega_{C/S}^1 =u^\ast\lambda\\
        &u^\ast\bar{s}_i^\ast \psi_j=s_i^\ast v^\ast\psi_j=s_i^\ast \left(\mathcal{O}(S_j)\otimes f^\ast s_j^\ast \Omega_{C/S}^1\right)=s_j^\ast \Omega_{C/S}^1=u^\ast\psi_j \text{ if }i\ne j\\
        &u^\ast\bar{s}_i^\ast \psi_i=s_i^\ast v^\ast\psi_i=s_i^\ast \left(\mathcal{O}(S_i)\otimes f^\ast s_i^\ast \Omega_{C/S}^1\right)=\mathcal{O}_S=u^\ast\mathcal{O}_{\mathcal{M}_{g,n}} \\
        &u^\ast\bar{s}_i^\ast \psi_{n+1}=s_i^\ast v^\ast  \psi_{n+1}=s_i^\ast \Omega^1_{C/S}(S_1+\cdots+S_n)=\mathcal{O}_S=u^\ast\mathcal{O}_{\mathcal{M}_{g,n}}\\
        &u^\ast\bar{s}_i^\ast \delta_{0,\left\{i,n+1\right\}}=s_i^\ast v^\ast  \delta_{0,\left\{i,n+1\right\}}=s_i^\ast \mathcal{O}(S_i)=-s_i^\ast \Omega^1_{C/S}=-u^\ast\psi_i\\
        &u^\ast\bar{s}_i^\ast \delta_{0,\left\{j,n+1\right\}}=s_i^\ast v^\ast \delta_{0,\left\{j,n+1\right\}}=s_i^\ast \mathcal{O}(S_i)=\mathcal{O}_S=u^\ast\mathcal{O}_{\mathcal{M}_{g,n}} \text{ for }j\ne i.
    \end{align*}
    In the third, fourth and fifth line, we used $s_i^\ast \mathcal{O}(S_i)=-s_i^\ast \Omega^1_{C/S}$ which follows from the standard intersection theory. Therefore, (2) holds.
\end{proof}

\begin{rmk}\label{rmk:psi}
    The identity $v^\ast \psi_{n+1}=\Omega^1_{C/S}(S_1+\cdots+S_n)$ is frequently referred to in the literature, see e.g. \cite[Lemma 1.4, Sections 2.B and 2.C]{Ka93} and \cite[Page 162]{AC87}. The proof of this identity can be found in \cite[Theorem 3.21]{Zv12}. However, despite its importance and versatility, there are only a few published articles that contain proof of this, so we have decided to provide proof of it here.
\end{rmk}

The following is the starting point of the classification of universal DNS line bundles.

\begin{lem}\label{lem:univline}
    Assume $g\ge 3$. Let $\mathcal{L}\in \text{Pic}\left(\mathcal{M}_{g,n} \right)$ such that for any normal variety $S$ and morphism $f:S\to \mathcal{M}_{g,n}$, there exists $m>0$ such that $f^\ast \mathcal{L}^{\otimes m}$ is trivial. Then $\mathcal{L}$ is trivial.
\end{lem}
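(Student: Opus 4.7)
The plan is to reduce to showing that some positive power $\mathcal{L}^{\otimes N}$ is already trivial as an element of $\text{Pic}(\mathcal{M}_{g,n})$, and then conclude using torsion-freeness. Indeed, by \cref{thm:acline}, since $g\ge 3$, the group $\text{Pic}(\mathcal{M}_{g,n})$ is freely generated by $\lambda,\psi_1,\ldots,\psi_n$, hence torsion-free, so any such $N$ will do.

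To produce $N$, I would apply \cref{lem:covmod}(1) to obtain a smooth (hence normal) variety $X$ together with a finite, surjective, \'etale morphism $f\colon X\to \mathcal{M}_{g,n}$ of some degree $d$. The hypothesis of the lemma, applied to this very $f$, furnishes $m>0$ with $f^\ast\mathcal{L}^{\otimes m}\cong \mathcal{O}_X$. Because $f$ is finite and flat, the usual determinantal norm
\[ \Nm_f(\mathcal{G})=\det(f_\ast \mathcal{G})\otimes \det(f_\ast\mathcal{O}_X)^{-1} \]
is well defined on line bundles on $X$, is compatible with tensor products, and satisfies the projection identity $\Nm_f(f^\ast\mathcal{M})\cong \mathcal{M}^{\otimes d}$ for every line bundle $\mathcal{M}$ on $\mathcal{M}_{g,n}$. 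Setting $\mathcal{M}=\mathcal{L}^{\otimes m}$ then gives $\mathcal{L}^{\otimes md}\cong \Nm_f(f^\ast\mathcal{L}^{\otimes m})\cong \Nm_f(\mathcal{O}_X)\cong\mathcal{O}_{\mathcal{M}_{g,n}}$, so I may take $N=md$ and finish by the torsion-freeness observation above.

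There is no serious obstacle. The only point requiring a brief justification is that the norm is being formed along a morphism whose target is a Deligne--Mumford stack rather than a scheme; however, the determinantal formula above is intrinsic to any finite flat morphism of DM stacks (one may alternatively construct it by \'etale descent from an atlas of $\mathcal{M}_{g,n}$), and it retains the identity $\Nm_f\circ f^\ast=(\,\cdot\,)^{\otimes d}$ in this generality. Thus the argument carries over verbatim from the familiar scheme-theoretic case.
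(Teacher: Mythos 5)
Your proof is correct and follows essentially the same route as the paper's: both pull back along the finite \'etale cover of \cref{lem:covmod}(1), kill $\mathcal{L}^{\otimes md}$ by taking the determinant of the pushforward along the resulting degree-$d$ finite flat map, and conclude via torsion-freeness of $\text{Pic}(\mathcal{M}_{g,n})$ from \cref{thm:acline}. The only (harmless) difference is that you form the norm directly on the stack, whereas the paper first base-changes to an arbitrary test scheme $S$ and takes $\det p_\ast\mathcal{O}_{S'}$ there; your descent remark adequately covers this point.
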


\begin{proof}
    Let $c_X: X\to \mathcal{M}_{g,n}$ be the finite etale surjective map from a smooth variety $X$, stated in \cref{lem:covmod} (1). Let $d$ be the degree of $c_X$ and $m$ be a positive integer such that $c_X^\ast \mathcal{L}^{\otimes m}$ is trivial on $X$. For any $K$-scheme $S$ and a map $u:S\to \mathcal{M}_{g,n}$. Let $S'=S\times_{\mathcal{M}_{g,n}}X$ and $p:S'\to S$, $q:S'\to X$ be the projection maps. We have the following.
    
    \[ p_\ast \mathcal{O}_{S'} \otimes u^\ast \mathcal{L}^{\otimes m}=p_\ast p^\ast u^\ast \mathcal{L}^{\otimes m}=p_\ast q^\ast c_X^\ast \mathcal{L}^{\otimes m}=p_\ast q^\ast \mathcal{O}_X=p_\ast \mathcal{O}_{S'}.   \]
    
    Since $p$ is a finite etale morphism of degree $d$, $p_\ast \mathcal{O}_{S'}$ is a rank $d$ vector bundle. Hence by taking the determinant, we know $u^\ast \mathcal{L}^{\otimes md}$ is a trivial line bundle. This holds for every $S$ and morphism $u$, $\mathcal{L}^{\otimes md}$ is trivial. since $\text{Pic}(\mathcal{M}_{g,n})$ is torsion-free for $g\ge 3$, $\mathcal{L}$ is trivial. 
\end{proof}

\begin{thm}\label{thm:ndsclas}
Assume $g\ge 3$. Let $\mathcal{L}\in \text{Pic}(\mathcal{U}_{g,n})$ be a universal DNS line bundle.
    \begin{enumerate}
    \item If $n\ge 2$, then $\mathcal{L}$ is a multiple of $\psi_{n+1}$.

    \item If $n=1$, then $\mathcal{L}$ is a linear combination of $\psi_1$ and $\psi_2$.
    \end{enumerate}   
\end{thm}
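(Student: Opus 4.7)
The plan is to expand $\mathcal{L}$ in the free basis of $\mathrm{Pic}(\mathcal{U}_{g,n})$ provided by Corollary~\ref{cor:corline}, writing
\[ \mathcal{L} = a\lambda + \sum_{j=1}^{n+1} b_j\psi_j + \sum_{j=1}^{n} c_j\,\delta_{0,\{j,n+1\}}, \]
and then pin down the coefficients by pulling $\mathcal{L}$ back along each universal section $\bar{s}_i \colon \mathcal{M}_{g,n} \to \mathcal{U}_{g,n}$.

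The first step is to show that $\bar{s}_i^*\mathcal{L}$ is trivial in $\mathrm{Pic}(\mathcal{M}_{g,n})$ for every $1 \le i \le n$. Fix any morphism $u \colon S \to \mathcal{M}_{g,n}$ from a normal variety, classifying a family $f \colon C \to S$ with sections $s_1,\ldots,s_n$ and inducing $v \colon C \to \mathcal{U}_{g,n}$ with $v \circ s_i = \bar{s}_i \circ u$. By the definition of a universal DNS line bundle, there exist $m > 0$ and an effective Cartier divisor $T$ on $C$ disjoint from every $s_i(S)$ with $(v^*\mathcal{L})^{\otimes m} \simeq \mathcal{O}(T)$. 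Pulling back by $s_i$ yields
\[ u^*(\bar{s}_i^*\mathcal{L})^{\otimes m} = s_i^*(v^*\mathcal{L})^{\otimes m} = s_i^*\mathcal{O}(T) = \mathcal{O}_S. \]
Since the torsion exponent $m$ may depend on $u$, Lemma~\ref{lem:univline} applies directly to $\bar{s}_i^*\mathcal{L}$ and forces it to be trivial.

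The second step is a direct computation using Proposition~\ref{prop:linecal}(2): parts (c), (d) and (e) kill the contributions of $\psi_i$, $\psi_{n+1}$ and of the $\delta_{0,\{j,n+1\}}$ with $j \neq i$, while part (f) replaces $\delta_{0,\{i,n+1\}}$ by $-\psi_i$. This gives
\[ \bar{s}_i^*\mathcal{L} = a\lambda + \sum_{\substack{1 \le j \le n \\ j \neq i}} b_j\psi_j - c_i\psi_i. \]
Since $\mathrm{Pic}(\mathcal{M}_{g,n})$ is freely generated by $\lambda,\psi_1,\ldots,\psi_n$ (Theorem~\ref{thm:acline}), the triviality of $\bar{s}_i^*\mathcal{L}$ forces $a = 0$, $c_i = 0$, and $b_j = 0$ for every $j \in \{1,\ldots,n\}$ with $j \neq i$. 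Varying $i$ then reads off the two cases: if $n \ge 2$, for every $j \le n$ we can choose $i \neq j$ to kill $b_j$, and combined with the vanishing of $a$ and of every $c_i$ only the coefficient $b_{n+1}$ survives, so $\mathcal{L} = b_{n+1}\psi_{n+1}$; if $n = 1$, the single constraint (from $i = 1$) kills $a$ and $c_1$ but leaves both $b_1$ and $b_2$ unconstrained, giving $\mathcal{L} = b_1\psi_1 + b_2\psi_2$.

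The only subtle point is the passage from the DNS property of each pullback $v^*\mathcal{L}$, which only yields a family-dependent torsion integer $m$, to the genuine triviality of $\bar{s}_i^*\mathcal{L}$; but Lemma~\ref{lem:univline} is stated in precisely this form, so once the identity $v \circ s_i = \bar{s}_i \circ u$ is unpacked the remainder is a purely formal calculation in a free abelian group.
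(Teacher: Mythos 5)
Your proposal is correct and follows essentially the same route as the paper: establish triviality of each $\bar{s}_i^\ast\mathcal{L}$ via the DNS property together with Lemma~\ref{lem:univline}, then compute the pullbacks in the free basis of Corollary~\ref{cor:corline} using Proposition~\ref{prop:linecal}(2) and read off the vanishing coefficients. The only difference is that you spell out the bookkeeping of which index $i$ kills which coefficient slightly more explicitly than the paper does.
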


\begin{proof}
    We will prove that for any normal variety $S$ and a morphism $u:S \to \mathcal{M}_{g,n}$, there exists $m>0$ such that $u^\ast \bar{s}_i^\ast\mathcal{L}^{\otimes m}$ is trivial: Let $f: C\to S$ be the family of smooth curves corresponding to $u$ and $v: C\to \mathcal{U}_{g,n}$ be the morphism of families of curves. Then, by the definition of DNS line bundle, there exists $m>0$ such that $u^\ast \bar{s}_i^\ast\mathcal{L}^{\otimes m}=s_i^\ast v^\ast\mathcal{L}^{\otimes m}$ is trivial. Hence, by \cref{lem:univline}, $\bar{s}_i^\ast\mathcal{L}$ is trivial. We will prove (1) and (2) using this fact.

    By \cref{cor:corline}, $\mathcal{L}=a\lambda+\sum_{j=1}^{n+1}b_j\psi_j+\sum_{k=1}^n c_k \delta_{0,\left\{k, n+1\right\}}$. By \cref{prop:linecal} (2), 
    \[ \bar{s}_i^\ast\mathcal{L}=a\lambda+\sum_{1\le j\le n, j\ne i}b_j\psi_j-c_i\psi_i=0. \]
    If $n\ge 2$, this is equivalent to $a=b_j=c_j=0$ for $1\le j\le n$, so $\mathcal{L}$ is a multiple of $\psi_{n+1}$. If $n=1$, this is equivalent to $a=c_1=0$. Hence $\mathcal{L}$ is a linear combination of $\psi_1$ and $\psi_2$.
\end{proof}

Therefore, we have some candidates for universal DNS line bundles. In the next section, we will examine which of them are indeed universal DNS line bundles. The answer depends on the characteristic of the base field. This reveals the difference between the geometry of the moduli space of curves in characteristic $p$ and characteristic $0$.

As can be seen from \cref{prop:dnsprop}, the universal DNSness is related to the semi-ampleness of the line bundle.

\begin{thm}\label{thm:seminds}
    Let $\mathcal{L}\in \text{Pic}(\mathcal{U}_{g,n})$, $\mathcal{L}'\in \text{Pic}\left(\overline{\mathcal{U}}_{g,n}\right)$ be line bundles and $i: \mathcal{U}_{g,n}\to \overline{\mathcal{U}}_{g,n}$ be the canonical inclusion. 

    \begin{enumerate}
        \item If $\mathcal{L}$ is semiample and $\bar{s}_i^\ast\mathcal{L}$ are trivial, then $\mathcal{L}$ is a weakly universal DNS line bundle.
        \item If $\mathcal{L}'$ is semiample and $\bar{s}_i^\ast\mathcal{L}'$ are trivial, then $i^\ast\mathcal{L}'$ is a universal DNS line bundle.
    \end{enumerate}
\end{thm}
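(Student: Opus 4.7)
The plan is to deduce part (1) as a direct application of \cref{prop:dnsprop}, and to obtain part (2) by reducing it to the proper case through a compactification of the base.

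For part (1), let $S$ be a proper normal variety with the corresponding family of smooth curves $f\colon C\to S$, inducing $u\colon S\to\mathcal{M}_{g,n}$ and $v\colon C\to\mathcal{U}_{g,n}$. Because $f$ is proper (its fibers are smooth projective curves) and $S$ is proper, $C$ is itself proper. Semiampleness is preserved under pullback, so $v^\ast\mathcal{L}$ is semiample on $C$. The compatibility $v\circ s_i=\bar s_i\circ u$ gives $s_i^\ast v^\ast\mathcal{L}\cong u^\ast\bar s_i^\ast\mathcal{L}$, which is trivial by hypothesis. Applying \cref{prop:dnsprop} to $v^\ast\mathcal{L}$ on the proper variety $C$ yields the required DNS datum, so $\mathcal{L}$ is a weakly universal DNS line bundle.

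For part (2), $S$ is only assumed to be a normal variety, so $C$ need not be proper and \cref{prop:dnsprop} cannot be applied to it directly. The idea is to work on a proper compactification where the semiampleness of $\mathcal{L}'$ on the proper target $\overline{\mathcal{U}}_{g,n}$ can be exploited. First, by Nagata's theorem followed by normalization, embed $S$ as a dense open subscheme of a proper normal variety; then, using the properness of $\overline{\mathcal{M}}_{g,n}$ together with resolution of indeterminacy supported outside $S$, obtain a proper normal variety $\bar S\supseteq S$ and an extension $\bar u\colon\bar S\to\overline{\mathcal{M}}_{g,n}$ of $u$, hence a family of stable curves $\bar f\colon\bar C\to\bar S$ extending $f$ with $C\subseteq\bar C$ open, and an extension $\bar v\colon\bar C\to\overline{\mathcal{U}}_{g,n}$ of $v$. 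Next, the pullback $\bar v^\ast\mathcal{L}'$ is semiample on the proper variety $\bar C$ and satisfies $\bar s_i^\ast\bar v^\ast\mathcal{L}'\cong\bar u^\ast\bar s_i^\ast\mathcal{L}'\cong\mathcal{O}_{\bar S}$, so by the argument of \cref{prop:dnsprop} there exist $m>0$ and an effective Cartier divisor $\bar T\subset\bar C$ with $\mathcal{O}(\bar T)\cong\bar v^\ast\mathcal{L}'^{\otimes m}$, disjoint from each $\bar S_i$. Finally, set $T=\bar T\cap C$: this is an effective Cartier divisor on $C$ disjoint from each $S_i=\bar S_i\cap C$, with $\mathcal{O}_C(T)\cong v^\ast(i^\ast\mathcal{L}')^{\otimes m}$, so $i^\ast\mathcal{L}'$ is DNS on $C$. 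Since this holds for every such pair $(S,C)$, $i^\ast\mathcal{L}'$ is a universal DNS line bundle.

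The principal obstacle lies in the last step: one must ensure that $T$ is nontrivial on $C$, i.e., that $\bar T$ is not entirely contained in the boundary $\bar C\setminus C$. This is arranged within the proof of \cref{prop:dnsprop} by choosing the defining hyperplane (equivalently, the section of $\bar v^\ast\mathcal{L}'^{\otimes m}$) generically: one must avoid the finite set $\bar h(\bigcup_i\bar S_i)\subset\mathbb{P}^k$ as well as the proper closed locus of hyperplanes whose preimage sits inside $\bar h(\bar C\setminus C)$, which together cut out a proper closed subset of the dual projective space of hyperplanes provided $\bar v^\ast\mathcal{L}'$ is not numerically trivial on $\bar C$ -- a condition satisfied by the candidates of interest classified in \cref{thm:ndsclas}. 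The secondary technicality is the compactification step itself, which follows from resolution of indeterminacy for rational maps from a normal variety into the proper stack $\overline{\mathcal{M}}_{g,n}$ combined with Nagata's compactification and normalization.
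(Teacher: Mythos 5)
Part (1) of your argument is exactly the paper's: pull back to the proper total space $C$ and invoke \cref{prop:dnsprop}. The problem is in part (2), at the compactification step. You claim that properness of $\overline{\mathcal{M}}_{g,n}$ plus ``resolution of indeterminacy supported outside $S$'' produces a proper normal $\bar S\supseteq S$ and an extension $\bar u\colon\bar S\to\overline{\mathcal{M}}_{g,n}$ of $u$. This fails in general: a morphism to $\overline{\mathcal{M}}_{g,n}$ \emph{is} a family of stable curves, and by the stable reduction theorem a family of smooth curves over $S$ extends to a stable family over a compactification only after a finite, typically ramified, base change. The failure already occurs when $S$ is a smooth affine curve, where blowing up points does nothing, yet there are families with nontrivial finite local monodromy around a puncture admitting no stable extension over the completed curve. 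Resolution of indeterminacy applies to rational maps into projective \emph{schemes} (e.g.\ the coarse space $\overline{\operatorname{M}}_{g,n}$), but a map to the coarse space does not yield a family of curves, so it cannot feed the rest of your argument.

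The paper's proof is built precisely to avoid this. It first base-changes along $Y=\overline{Y}\times_{\overline{\mathcal{M}}_{g,n}}\mathcal{M}_{g,n}\to\mathcal{M}_{g,n}$, where $\overline{Y}\to\overline{\mathcal{M}}_{g,n}$ is a finite flat surjective cover by a projective scheme (\cref{lem:covmod}(2)). The cover $S'=S\times_{\mathcal{M}_{g,n}}Y$ then maps to the scheme $\overline{Y}$, so Nagata compactification of $S'$ relative to $S'\to\overline{Y}$ produces a proper $\overline{S'}$ carrying a genuine family of stable curves, on which your hyperplane argument runs. One then descends the DNS property from $C'\to S'$ back to $C\to S$ along the finite flat surjective map $S'\to S$ using \cref{prop:dnspullback}(2); this descent step is absent from your outline and is exactly where the normality hypothesis in \cref{defn:univdns} is consumed. (Your secondary worry, that the divisor might lie entirely over $\overline{S'}\setminus S'$, is a fair observation but is minor by comparison: for the line bundles to which the theorem is applied, $h$ is nonconstant on fibers over points of $S'$, so a generic hyperplane meets $h(C')$ away from the images of the sections.)
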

\begin{proof}
    (1) is just a corollary of \cref{prop:dnsprop}. The basic strategy for (2) is the same as \cref{prop:dnsprop}. However, since the base scheme $S$ of a family of curves $f: C \to S$ can be nonproper, the global section of the trivial line bundle may not be $K$. Hence, we will first compactify $S$ and use the same method. By \cref{lem:covmod} (2), there exists a $K$-scheme $\overline{Y}$ with a finite flat surjective map $\overline{Y}\to \overline{\mathcal{M}}_{g,n}$. This implies $\overline{Y}$ is a projective $K$-scheme. Let $Y:=\overline{Y}\times_{\overline{\mathcal{M}}_{g,n}}\mathcal{M}_{g,n}$ be an open subscheme of $\overline{Y}$. Then $Y\to \mathcal{M}_{g,n}$ is also a finite flat surjective map. Fix a family $f: C\to S$ of smooth curves of genus $g$ with $n$ marked points over a normal variety $S$. Let $S\to \mathcal{M}_{g,n}$ be the corresponding map and define $S':=S\times_{\mathcal{M}_{g,n}} Y$. Then $S'\to S$ is also finite, flat, and surjective. By taking the Nagata compactification with respect to the map $S'\to \overline{Y}$, we obtain a proper $K$-scheme $\overline{S'}$ with an open immersion $j:S' \to \overline{S'}$ such that $S'\to \overline{Y}$ factors through $\overline{S'}$ (note that $\overline{S'}$ may be neither irreducible nor reduced). Hence we have the following diagram.
    \[ \begin{tikzcd}
    C\arrow[dd, "f"]\arrow[rd, "v"] & & C'\arrow[dd, near start, "f'"]\arrow[ll]\arrow[rr] & &\overline{C'}\arrow[dd, near start, "\bar{f}'"]\arrow[rd, "\bar{v}'"] & \\
    &\mathcal{U}_{g,n}\arrow[dd,  near start, "\pi"]\arrow[rrrr, "i"] & & & &\overline{\mathcal{U}}_{g,n}\arrow[dd, "\pi"] \\
    S\arrow[rd] & &S'\arrow[rr, "j"]\arrow[ll] & &\overline{S'}\arrow[rd] & \\
    &\mathcal{M}_{g,n}\arrow[rrrr, "i"] & & & &\overline{\mathcal{M}}_{g,n}
    \end{tikzcd} \]
    We have already defined the bottom rectangle. The upper rectangle corresponds to the family of curves $f:C\to S$, $f':C'\to S'$, and $\bar{f}':\overline{C'} \to \overline{S'}$. Let $v:C\to \mathcal{U}_{g,n}$, $v':C'\to \mathcal{U}_{g,n}$,and $\bar{v}':\overline{C'}\to \overline{\mathcal{U}}_{g,n}$ be the morphism associated to the universal curve. Under the assumption, $\left(\bar{v}'\right)^\ast \mathcal{L}'^{\otimes m}$ is base-point free, for some $m>0$. Let $h:\overline{C'}\to \mathbb{P}^k$ be the corresponding morphism. Since $\bar{s}_i^\ast\mathcal{L}'$ are trivial, the restriction of $\left(\bar{v}'\right)^\ast \mathcal{L}'^{\otimes m}$ to the sections of $\bar{f}'$ is trivial. Hence $h$ contracts them to a finite set of closed points. Consider the restriction $h|_{C'}:C'\to \mathbb{P}^k$, which is the map corresponding to the line bundle $v'^\ast \mathcal{L}'^{\otimes m}$. Then $h|_{C'}$ also contracts sections to a finite set of closed points $P\subseteq \mathbb{P}^k$. By the same method as \cref{prop:dnsprop}, $v'^\ast \mathcal{L}'$ is a DNS line bundle of $f':C'\to S'$. Note that $S'\to S$ is a finite flat surjective map, and $v'^\ast \mathcal{L}'$ is the restriction of $\mathcal{L}$ to $C'$. By \cref{prop:dnspullback} (2),  since $S$ is a normal variety, $v^\ast\mathcal{L}$ is a DNS line bundle on $f:C\to S$. Hence $\mathcal{L}$ is a universal DNS line bundle.
\end{proof}

To prove \cref{conj:mconj2}, we need more than universal DNS-ness. 

\begin{thm}\label{thm:conj2prf}
    With the notation of \cref{thm:seminds}, in particular, if $\mathcal{L}$ satisfies the condition of \cref{thm:seminds} (2) (resp. (1)), and its exceptional locus is contained in the boundary $\partial \mathcal{M}_{g,n}$, then \cref{conj:mconj2} (resp. \cref{conj:mconj2} for proper varieties $S$) holds if we identify $\overline{\mathcal{U}}_{g,n}$ with $\overline{\mathcal{M}}_{g,n+1}$. 
\end{thm}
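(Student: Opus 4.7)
The plan is to strengthen the proof of \cref{thm:seminds} by choosing a hyperplane section carefully so that the resulting divisor passes through a prescribed point. By \cref{prop:equiv2}(3), it suffices to show that for any family $f:C\to S$ of smooth $n$-pointed curves over a normal variety $S$ (required proper in case (1)), and any closed point $y\in C$ not lying on any $S_i$, there is a codimension $1$ closed subscheme $T\subseteq C$ containing $y$ and disjoint from each $S_i$. The input datum $x\in\mathcal M_{g,n+1}$ of \cref{conj:mconj2}, lying over $\pi(x)\in u(S)$, corresponds to such a $y$: choose any preimage $s_0\in S$ of $\pi(x)$ and let $y$ be the point on the smooth fiber $C_{s_0}$ represented by $x$.

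Reproduce the compactification setup of the proof of \cref{thm:seminds}(2): take the finite flat surjective base change $S'\to S$ built from \cref{lem:covmod}(2), form a Nagata compactification $\overline{S'}$ inside $\overline{Y}$, and obtain the induced proper family $\bar f':\overline{C'}\to\overline{S'}$ of stable curves together with the morphism $\bar v':\overline{C'}\to\overline{\mathcal U}_{g,n}$. Fix a preimage $y'\in C'$ of $y$; since $y'$ lies in the smooth locus, $\bar v'(y')=x$. By hypothesis, $\mathcal L^{\otimes m}$ is base point free for some $m>0$, inducing a morphism $h:\overline{\mathcal U}_{g,n}\to\mathbb P^N$. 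Since each $\bar s_i^\ast\mathcal L$ is trivial and each $\bar S_i$ is connected and proper, $h$ contracts $\bar S_i$ to a single point $p_i\in\mathbb P^N$.

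The central claim is that $h(x)\ne p_i$ for every $i$. Under the identification $\overline{\mathcal U}_{g,n}\cong\overline{\mathcal M}_{g,n+1}$, the point $x\in\mathcal M_{g,n+1}$ parametrizes a smooth $(n+1)$-pointed curve, hence lies outside the singular locus. By the exceptional locus hypothesis, $x$ is therefore not in the exceptional locus of $\mathcal L$, so $h^{-1}(h(x))$ is zero dimensional at $x$. However, $\bar S_i$ is positive dimensional (of dimension $3g-3+n>0$) and entirely contained in $h^{-1}(p_i)$; if $h(x)=p_i$, then $h^{-1}(h(x))$ would contain $\bar S_i$, contradicting zero dimensionality at $x$. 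So one can pick a hyperplane $H\subseteq\mathbb P^N$ through $h(x)$ missing every $p_i$, and $D:=h^{-1}(H)\in|\mathcal L^{\otimes m}|$ is an effective Cartier divisor containing $x$ and disjoint from each $\bar S_i$.

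Pulling $D$ back to $\overline{C'}$ and restricting to $C'$ produces an effective Cartier divisor containing $y'$ and disjoint from the sections of $f'$. Applying the pushforward/normality argument in the proof of \cref{prop:dnspullback}(2) to the finite flat surjective map $C'\to C$ descends this divisor to an effective Cartier divisor $T\subseteq C$ containing $y=p'(y')$ and disjoint from the $S_i$, as required. The argument for case (1) is identical but skips the compactification step, since after pullback along the finite cover the base is already proper. The main obstacle is the central claim above: it hinges on interpreting the hypothesis ``exceptional locus contained in $\partial\mathcal M_{g,n}$'' as the statement that $h$ has zero dimensional fibers at every point representing a smooth stable pointed curve, so that no such fiber can engulf the positive dimensional contracted divisor $\bar S_i$.
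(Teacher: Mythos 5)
Your overall route is the paper's: reduce to \cref{prop:equiv2}(3), compactify as in the proof of \cref{thm:seminds}, use the morphism $h$ defined by $\mathcal{L}^{\otimes m}$, show $h(x)\neq p_i$, and pull back a hyperplane through $h(x)$ avoiding the $p_i$. However, your justification of the central claim $h(x)\neq p_i$ has a genuine gap. You argue: $x$ is not in the exceptional locus, so $h^{-1}(h(x))$ is zero-dimensional \emph{at} $x$; and if $h(x)=p_i$ the fiber would contain the positive-dimensional $\overline{S}_i$, ``contradicting zero dimensionality at $x$.'' This is a non sequitur: zero-dimensionality at $x$ is a local statement saying $x$ is an isolated point of its fiber, and it is perfectly consistent with the same fiber having a positive-dimensional connected component elsewhere. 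A priori the fiber $h^{-1}(p_i)$ could be the disjoint union of $\{x\}$ and $\overline{S}_i$, in which case $h(x)=p_i$ and your hyperplane construction breaks down (no hyperplane through $h(x)$ can miss $p_i$). Nothing in ``$\mathcal{L}^{\otimes m}$ is base point free for some $m$'' guarantees that the induced map has connected fibers onto its image.

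The missing ingredient is Stein factorization (equivalently, taking $m$ large and divisible enough that $h$ is the semiample fibration, with $h_\ast\mathcal{O}=\mathcal{O}$). This is exactly what the paper inserts: after Stein factorizing, the fiber through $x$ is connected, so if it also contained $S_i$ there would be a closed curve $D$ in that fiber joining $x$ to $S_i$; since $D$ is contracted, $\mathcal{L}\cdot D=0$, so the image of $D$ in $\overline{\mathcal{M}}_{g,n+1}$ is either a point or lies in the exceptional locus, hence in the boundary --- contradicting the fact that $x$ maps into $\mathcal{M}_{g,n+1}$ while $D\cap S_i$ maps into the boundary. (Equivalently, with connected fibers your own phrasing becomes correct, since a connected fiber with an isolated point is a single point.) Once you add this step your argument closes up; the rest (the choice of $y$ over $x$, the descent along the finite flat cover via \cref{prop:dnspullback}(2) or simply by taking the image of the divisor under $C'\to C$, and the remark that case (1) needs no compactification) matches the paper.
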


For the definition of exceptional locus of a nef line bundle, see \cite[Definition 0.1]{Ke99}.

\begin{proof}
    Since the proofs for both cases are almost the same, we will prove the claim assuming the condition of \cref{thm:seminds} (2). Let $f: C\to S$ be a family of smooth curves of genus $g$ with $n$ marked points $s_1,\cdots,s_n:S \to C$ over an irreducible, finite type $K$-scheme $S$. By the proof of \cref{thm:seminds}, we may assume that $f:C \to S$ is an open subfamily of a family $\bar{f}:\overline{C}\to \overline{S}$ of stable genus $g$ curves with $n$ marked points $\bar{s}_1,\cdots,\bar{s}_n:\overline{S} \to \overline{C}$ over a proper connected $K$-scheme $\overline{S}$ after a finite flat base change. Let $v$ (resp. $\bar{v}$) be the corresponding map from $C$ (resp. $\overline{C}$) to $\mathcal{U}_{g,n}$ (resp. $\overline{\mathcal{U}}_{g,n}$). Then $\bar{v}^\ast\mathcal{L}$ is a semiample line bundle. Let $h:\overline{C}\to Z$ be the corresponding morphism into a projective $K$-scheme $Z$. By the Stein factorization, we may assume that every fiber of $h$ is connected. Since $\bar{s}_i^\ast \mathcal{L}$ are trivial and $\overline{S}$ is a proper connected $K$-scheme, $h\circ \bar{s}_i$ is a constant map. Let $x_i$ be the image of $h\circ \bar{s}_i$.

    We will show that if a closed point $x\in C$ is not contained in the image of $s_1,\cdots, s_n$, then $h(x)\ne x_i$ for every $1\le i\le n$. Assume $h(x)=x_i$ for some $x_i$. Since all fibers of $h$ are connected, $h^{-1}(x_i)$ is connected. This set contains $x$ and $S_i$. By connectedness, there exists a closed subcurve $D$ of $\overline{C}$ that satisfies $h(D)=x_i$, $x\in D$, and the intersection of $D$ and the image of $\bar{s}_i$ is nonempty. Since $h(D)=x_i$, $\mathcal{L}|_D^{\otimes m}$ is trivial for some $m>0$. Therefore, $\bar{v}^\ast\mathcal{L}\cdot D=0$, so $\bar{v}(D)$ is contained in the exceptional locus of $\mathcal{L}$ or it is a point. The first implies that $\bar{v}(D)$ is contained in the boundary. However, $\bar{v}(x)$ is not contained in the boundary, so cannot happen. Moreover, $\bar{v}(D)$ cannot be a point since $\bar{v}(x)$ is not contained in the boundary but $\bar{v}(D\cap S_i)$ is contained in the boundary. This is a contradiction.

    Now we will prove \cref{prop:equiv2} (3). For any $x$ not contained in the image of $s_1,\cdots, s_n$, $h(x)\ne x_i$ for all $i$ according to the above paragraph. Since $Z$ is a projective $K$-scheme, we can take the hyperplane section which contains $h(x)$ but does not contain $x_i$. The inverse image of this hyperplane section is the desired $T$ of \cref{prop:equiv2} (3).
\end{proof}

\begin{rmk}\label{rmk:success}
One notable success of this theorem is that, despite \cref{conj:mconj2} appearing considerably stronger than \cref{conj:mconj}, it does not require much additional effort to prove if we use the universal DNS line bundle, as long as we have extra knowledge about its exceptional locus. Additionally, the ``universal" DNS line bundle approach is useful in establishing the proof of \cref{thm:conj2prf}. While the exceptional locus of a line bundle on a family of curves may be complicated due to degeneration, we can obtain a concise description of it on the universal curve.
\end{rmk}

In the next section, we will see that all $\psi$ classes satisfy the condition of \cref{thm:conj2prf}. Therefore, as mentioned in \cref{rmk:success}, the proof of \cref{conj:mconj2} does not require much more than \cref{conj:mconj2}. This is one of the upshots of the DNS line bundle method developed in this section.

\begin{rmk}\label{rmk:bdry}
    We note that \cref{thm:kapapp} has the essentially same, but more explicit, classical proof of \cref{thm:conj2prf}.
\end{rmk}

\section{Results on DNS line bundles and lifting conjectures}\label{sec:results}

\subsection{Keel's theorem on semiampleness of \texorpdfstring{$\psi$}{TEXT} classes}\label{sec:Keel}

The following theorem is the origin of every result we will prove.

\begin{thm}\cite[Theorem 0.4]{Ke99}\label{thm:omegasemi} 
    Assume $2g-2+n>0$. Let $\pi:\overline{\mathcal{U}}_{g,n}\to \overline{\mathcal{M}}_{g,n}$ be the universal family of stable curves and $\overline{S}_1,\cdots,\overline{S}_n$ be the image of universal sections. Then $\Omega_{\pi}\left(\overline{S}_1+\cdots+\overline{S}_n\right)$ is nef, big, and exceptional locus is contained in the boundary $\partial \mathcal{M}_{g,n}$ regardless of the characteristic of the base field. Moreover, if the characteristic of the base field is positive, then this is a semiample line bundle on $\overline{\mathcal{U}}_{g,n}$.
\end{thm}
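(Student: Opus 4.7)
The plan is to verify the four conclusions — nefness, bigness, exceptional locus in the boundary, and semiampleness in positive characteristic — in roughly that order, using the identification $L := \Omega_\pi(\overline{S}_1+\cdots+\overline{S}_n) \cong \psi_{n+1}$ recorded in \cref{prop:linecal}. The foundational observation, independent of characteristic, is that on each geometric fiber of $\pi$ the line bundle $L$ restricts to $\omega_C(p_1+\cdots+p_n)$, which is ample on a Deligne-Mumford stable $n$-pointed curve: this is a direct translation of the stability condition that every rational component carries at least three special points and every genus-one component at least one. Consequently $L$ is $\pi$-ample.

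For nefness, I would check $L \cdot C \geq 0$ for every irreducible complete curve $C \subseteq \overline{\mathcal{U}}_{g,n}$. If $C$ lies in a fiber, $\pi$-ampleness handles it. Otherwise $C$ maps finitely to a curve $B \subseteq \overline{\mathcal{M}}_{g,n}$; one may pull the universal family back to the normalization of $B$ and invoke standard positivity results for the relative log dualizing sheaf of a family of stable pointed curves (an adjunction/Arakelov-style inequality). Bigness then follows by computing $L^{3g-2+n}$ and pushing forward along $\pi$: one factor of $L$ supplies the fiberwise positivity, while the remaining $3g-3+n$ factors assemble into a strictly positive top intersection number on $\overline{\mathcal{M}}_{g,n}$ using standard intersection-theoretic identities between $\psi$, $\lambda$, and $\delta$ classes.

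For the exceptional locus, the claim is that any irreducible subvariety $V$ on which $L|_V$ fails to be big must lie in $\pi^{-1}(\partial \mathcal{M}_{g,n})$. A dimension count reduces this to the case of curves: an irreducible curve $C$ with $L \cdot C = 0$ must sit entirely inside a fiber of $\pi$ (otherwise the dominant-to-base argument from the previous paragraph yields a strictly positive contribution), and by the fiberwise ampleness of $L$ any such $C$ must be a rational component contracted by the stabilization map, hence contained in the Deligne-Mumford boundary. A straightforward induction on dimension, combined with the behavior of bigness under fibrations, upgrades this to arbitrary $V$.

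The hard part, and the only place where $\mathrm{char}\, K > 0$ is essential, is passing from these properties to actual semiampleness. Here I would invoke Keel's base-point-free criterion \cite[Theorem 0.2]{Ke99}: on a projective scheme over a field of positive characteristic, a nef line bundle is semiample if and only if its restriction to its exceptional locus is semiample. Combined with $E(L) \subseteq \pi^{-1}(\partial \mathcal{M}_{g,n})$, this reduces the problem to proving semiampleness on the boundary strata. The boundary of $\overline{\mathcal{M}}_{g,n+1}$ is the union of images of clutching maps from products $\overline{\mathcal{M}}_{g_1,n_1+1} \times \overline{\mathcal{M}}_{g_2,n_2+1}$ and from $\overline{\mathcal{M}}_{g-1,n+2}$, and careful bookkeeping using \cref{prop:linecal} shows that $\psi_{n+1}$ pulls back under each clutching to the analogous log canonical class on whichever factor carries the $(n+1)$st marked point. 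One then concludes by noetherian induction on $3g-3+n$, with base cases such as $\overline{\mathcal{M}}_{0,3} = \mathrm{pt}$ and $\overline{\mathcal{M}}_{1,1}$ handled by direct inspection. The main technical obstacle is exactly this clutching bookkeeping, together with verifying compatibility at boundary self-intersections; this is the heart of Keel's argument in \cite{Ke99}.
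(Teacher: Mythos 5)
The first thing to say is that the paper does not prove this statement at all: it is Keel's Theorem~0.4, quoted (after translating notation via \cref{prop:linecal}) and used as a black box, so the only ``proof'' on offer in the paper is the citation to \cite{Ke99}. At the level of architecture your sketch of the hard, positive-characteristic part does track Keel's actual strategy: fiberwise ampleness of $\omega_C(p_1+\cdots+p_n)$ coming from the stability condition, Keel's criterion that a nef line bundle in characteristic $p$ is semiample iff its restriction to its exceptional locus is, and an induction over boundary strata via clutching maps. That said, each of those steps is the substance of Keel's paper, and the outline cannot be certified on its own.

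There is, however, a concrete error in your exceptional-locus paragraph, and it inverts the geometry. The fibers of $\pi:\overline{\mathcal{U}}_{g,n}\to\overline{\mathcal{M}}_{g,n}$ are the stable curves themselves, and, as you note, $L$ restricts on each fiber to the ample bundle $\omega_C(p_1+\cdots+p_n)$; consequently \emph{no} complete irreducible curve contained in a fiber satisfies $L\cdot C=0$, and there are no ``rational components contracted by the stabilization map'' sitting inside these fibers --- they are already stable. So every $L$-trivial curve is horizontal, the opposite of what you assert. Moreover, your claim that a horizontal curve always gives a strictly positive contribution is false: by \cref{prop:linecal}~(2) one has $\bar{s}_i^\ast\psi_{n+1}=\mathcal{O}$, so every complete curve inside a section image $\overline{S}_i=\delta_{0,\{i,n+1\}}$ is $L$-trivial, and these section images are precisely (part of) the Deligne--Mumford boundary of $\overline{\mathcal{U}}_{g,n}\cong\overline{\mathcal{M}}_{g,n+1}$ that the theorem is pointing at. The actual content of the exceptional-locus statement is a \emph{strict} Arakelov-type positivity for horizontal curves avoiding the boundary (non-isotrivial families of smooth pointed curves with a multisection disjoint from the markings have strictly positive $\psi_{n+1}$-degree); the ``standard positivity of the relative log dualizing sheaf'' you invoke only yields $\ge 0$, i.e.\ nefness, which cannot locate $E(L)$. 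As written, your argument would place $E(L)$ inside the fibers and miss the sections entirely.
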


Keel's powerful theorem implies the semiampleness of every psi class.

\begin{cor}\label{cor:psisemi}
    Assume $n\ge 1$ and $2g-2+n>0$,
    \begin{enumerate}
        \item $\psi_1,\cdots, \psi_{n+1}$ are nef and big line bundles on $\overline{\mathcal{M}}_{g,n+1}$. Their exceptional locus is contained in the boundary.
        \item If $\text{Char }K>0$, then $\psi_1,\cdots, \psi_{n+1}$ are semiample line bundles on $\overline{\mathcal{M}}_{g,n+1}$.
    \end{enumerate}
\end{cor}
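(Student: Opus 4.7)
The plan is to deduce both parts from Keel's theorem (Theorem 4.1) combined with an elementary symmetry argument on $\overline{\mathcal{M}}_{g,n+1}$.

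First, I would handle $\psi_{n+1}$ directly. Proposition 3.10(1) gives the identification
\[ \psi_{n+1} \simeq \Omega_{\pi}\bigl(\overline{S}_1 + \cdots + \overline{S}_n\bigr) \]
under the canonical isomorphism $\overline{\mathcal{U}}_{g,n} \cong \overline{\mathcal{M}}_{g,n+1}$. Plugging this into Theorem 4.1 immediately gives that $\psi_{n+1}$ is nef and big on $\overline{\mathcal{M}}_{g,n+1}$, that its exceptional locus is contained in the Deligne--Mumford boundary, and, when $\mathrm{char}\, K > 0$, that it is semiample. So both (1) and (2) hold for the index $i = n+1$.

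Next, for $1 \leq i \leq n$, I would invoke the natural $S_{n+1}$-action on $\overline{\mathcal{M}}_{g,n+1}$ by permuting the marked points. Let $\sigma_i : \overline{\mathcal{M}}_{g,n+1} \to \overline{\mathcal{M}}_{g,n+1}$ be the automorphism corresponding to the transposition $(i, n+1)$. Since the psi classes are defined functorially via pullback along the universal sections, $\sigma_i$ carries $\bar{s}_{n+1}$ to $\bar{s}_i$, so $\sigma_i^{\ast}\psi_{n+1} \simeq \psi_i$. Automorphisms preserve nefness, bigness, and semiampleness, and $\sigma_i$ visibly maps the boundary $\partial \mathcal{M}_{g,n+1}$ to itself (it merely permutes the indexing of the boundary divisors $\delta_{a,P}$). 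Hence each property established for $\psi_{n+1}$ transfers to $\psi_i$, completing both (1) and (2).

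There is no genuine obstacle here: Theorem 4.1 does all the substantive work, and the reduction from $\psi_{n+1}$ to the remaining $\psi_i$ via the symmetric-group action is essentially formal. The only points meriting a line of justification are the identity $\sigma_i^{\ast}\psi_{n+1} \simeq \psi_i$ (which follows from the definition $\psi_j = \bar{s}_j^{\ast}\Omega_{\pi}$) and the invariance of the boundary under $\sigma_i$.
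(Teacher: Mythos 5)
Your proof is correct and follows essentially the same route as the paper: identify $\psi_{n+1}$ with $\Omega_{\pi}(\overline{S}_1+\cdots+\overline{S}_n)$ via \cref{prop:linecal}, apply Keel's \cref{thm:omegasemi}, and transfer the properties to the other $\psi_i$ by pulling back along the automorphism induced by the transposition $(i,n+1)$, noting it preserves the boundary. No gaps.
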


\begin{proof}
    (1) By \cref{thm:omegasemi} and \cref{prop:linecal}, $\psi_{n+1}$ is a nef and big line bundle whose exceptional locus is contained in the boundary. Let $\sigma_i\in S_{n+1}$ be a permutation that sends $i$ and $n+1$ to each other. This then induces $\sigma:\overline{\mathcal{M}}_{g,n+1}\to \overline{\mathcal{M}}_{g,n+1}$ which permutes the marked points. Then $\psi_i=\sigma^\ast\psi_{n+1}$. Hence, $\psi_i$ are nef and big line bundles. Also, since $\sigma_i$ sends boundary to boundary, its exceptional locus is also contained in the boundary. The proof of (2) is almost the same.
\end{proof}

\subsection{Characteristic \texorpdfstring{$p$}{TEXT} case}\label{sec:charp}
In this section, we will assume $\text{char }K=p>0$ unless otherwise stated. In this case, almost any candidate for a universal DNS line bundle is indeed a universal DNS line bundle. 

\begin{thm}\label{thm:corudns}
    Assume $2g-2+n>0$, $n\ge 1$.
    \begin{enumerate}
        \item If $n\ge 2$, $\psi_{n+1}$ is a universal DNS line bundle on $\mathcal{U}_{g,n}$. In addition, its exceptional locus is contained in the boundary.
        \item If $n=1$, then for any non-negative integers $m_1, m_2$, $m_1\psi_1+m_2\psi_2$ is a universal DNS line bundle on $\mathcal{U}_{g,1}$. Also, their exceptional locus is contained in the boundary.
    \end{enumerate}
\end{thm}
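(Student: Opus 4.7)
The plan is to deduce both parts of the theorem as a direct application of Theorem \ref{thm:seminds}~(2) (which converts semiampleness plus triviality-along-sections into universal DNS), using Corollary \ref{cor:psisemi} (semiampleness of $\psi$ classes in positive characteristic) and the pullback identities of Proposition \ref{prop:linecal}~(2). All four hypotheses for Theorem \ref{thm:seminds}~(2) will be directly readable off these results, so the bulk of the work will be bookkeeping rather than new geometry.

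For part (1), I would take the line bundle $\mathcal{L}':=\psi_{n+1}$ on $\overline{\mathcal{U}}_{g,n}=\overline{\mathcal{M}}_{g,n+1}$. Corollary \ref{cor:psisemi}~(2) gives that $\psi_{n+1}$ is semiample, using $\operatorname{char}K>0$. Proposition \ref{prop:linecal}~(2) gives $\bar{s}_i^{*}\psi_{n+1}=\mathcal{O}_{\mathcal{M}_{g,n}}$ for every $1\le i\le n$. Thus Theorem \ref{thm:seminds}~(2) applies to $\mathcal{L}'=\psi_{n+1}$ and yields that $i^{*}\psi_{n+1}$ is a universal DNS line bundle on $\mathcal{U}_{g,n}$. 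The containment of the exceptional locus in $\partial\mathcal{M}_{g,n}$ is exactly the last clause of Corollary \ref{cor:psisemi}~(1), and is characteristic-free.

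For part (2), I would set $\mathcal{L}':=m_1\psi_1+m_2\psi_2$ on $\overline{\mathcal{U}}_{g,1}=\overline{\mathcal{M}}_{g,2}$ (I will assume $(m_1,m_2)\ne(0,0)$, as a trivial line bundle is never DNS). Semiampleness is preserved under non-negative integer sums, so Corollary \ref{cor:psisemi}~(2) gives that $\mathcal{L}'$ is semiample. Proposition \ref{prop:linecal}~(2) with $n=1$ yields $\bar{s}_1^{*}\psi_1=\mathcal{O}$ and $\bar{s}_1^{*}\psi_2=\mathcal{O}$, hence $\bar{s}_1^{*}\mathcal{L}'=\mathcal{O}$. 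Theorem \ref{thm:seminds}~(2) then gives that $i^{*}\mathcal{L}'$ is a universal DNS line bundle on $\mathcal{U}_{g,1}$.

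For the exceptional locus in part (2), I would argue as follows. If $V\subseteq\overline{\mathcal{M}}_{g,2}$ is a subvariety with $\mathcal{L}'|_V$ not big, then since $\psi_1|_V$ and $\psi_2|_V$ are nef and a sum of a big and a nef line bundle is big, for each index $i$ with $m_i>0$ the restriction $\psi_i|_V$ must fail to be big. Hence $V$ is contained in the exceptional locus of some $\psi_i$ ($i=1$ or $2$), and Corollary \ref{cor:psisemi}~(1) places this in the boundary. I do not expect any substantial obstacle: the theorem is essentially an immediate corollary of the machinery of Section \ref{sec:LNS} together with Keel's theorem as packaged in Corollary \ref{cor:psisemi}, and the only subtlety is the elementary observation about exceptional loci of non-negative combinations just noted.
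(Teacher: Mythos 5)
Your proposal is correct and follows exactly the paper's own route: the paper proves this theorem in one line by citing Proposition \ref{prop:linecal}~(2), Theorem \ref{thm:seminds}~(2), and Corollary \ref{cor:psisemi}, precisely the three ingredients you assemble. Your extra remarks --- excluding $(m_1,m_2)=(0,0)$ and the big-plus-nef argument locating the exceptional locus of $m_1\psi_1+m_2\psi_2$ inside that of the individual $\psi_i$ --- are sensible elaborations of details the paper leaves implicit.
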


This follows from \cref{prop:linecal} (2), \cref{thm:seminds} (2) and \cref{cor:psisemi} (2). In \S \ref{sec:char0}, we will see that this is very different from the characteristic $0$ case, where almost every candidate for universal DNS line bundles is indeed not a universal DNS line bundle.

\begin{cor}\label{cor:conjpos}
    \begin{enumerate}
        \item \cref{conj:mconj} holds in positive characteristic.
        \item \cref{thm:mainthm1} holds.
    \end{enumerate}
\end{cor}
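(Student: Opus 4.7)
The plan is to deduce part (1) from the existence of universal DNS line bundles in positive characteristic, as already packaged in \cref{thm:corudns}, and then feed part (1) into \cref{thm:mapp}(2) to obtain part (2). All of the heavy lifting was in fact done upstream, by Keel's semiampleness theorem \ref{thm:omegasemi} and its propagation through \cref{cor:psisemi} and \cref{thm:seminds}; what remains is just assembly and a mild normalization step.

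First I would establish part (1). Fix $(g,n)$ with $2g-2+n>0$ and $n\ge 1$ and select a universal DNS line bundle $\mathcal{L}$ on $\mathcal{U}_{g,n}$: by \cref{thm:corudns}, I may take $\mathcal{L}=\psi_{n+1}$ when $n\ge 2$ and $\mathcal{L}=\psi_{2}$ when $n=1$. For any morphism $u:S\to \mathcal{M}_{g,n}$ from a normal variety $S$, corresponding to a family $f:C\to S$ with classifying map $v:C\to \mathcal{U}_{g,n}$, the defining property of a universal DNS line bundle (\cref{defn:univdns}) says that $v^\ast\mathcal{L}$ is a DNS line bundle on $C$. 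The first assertion of \cref{prop:dnsconj} then yields \cref{conj:mconj} for $u$.

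To promote this from normal varieties to arbitrary varieties $S$, I would pass to the normalization $\nu:\tilde{S}\to S$, which is a finite surjective morphism since $S$ is of finite type over a field. Applying the normal case to $u\circ\nu:\tilde{S}\to \mathcal{M}_{g,n}$ produces a proper surjective $p:S'\to \tilde{S}$ and a lift $u':S'\to \mathcal{M}_{g,n+1}$ making the square ($2$-)commute; then $\nu\circ p:S'\to S$ is still proper and surjective, and $u'$ is the sought-for lift, proving \cref{conj:mconj} in positive characteristic.

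Part (2) is then a direct invocation of \cref{thm:mapp}(2): once \cref{conj:mconj} is known in positive characteristic, that theorem delivers a $d$-dimensional complete subvariety of $\rm{M}_g$ under the stated numerical conditions $g\ge 3\cdot 2^{d-1}$ (for $\text{char }K>2$) and $g\ge \frac{5\cdot 3^{d-1}+1}{2}$ (for $\text{char }K=2$), which is precisely the content of \cref{thm:mainthm1}. Thus no genuine obstacle remains at this stage; the corollary is a formal consequence of threading together \cref{thm:corudns}, \cref{prop:dnsconj}, and \cref{thm:mapp}(2), with normalization used only to handle non-normal base varieties.
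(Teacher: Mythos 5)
Your proposal is correct and follows essentially the same route as the paper: the existence of a universal DNS line bundle in positive characteristic (\cref{thm:corudns}), combined with \cref{prop:dnsconj} and the finiteness of normalization to handle non-normal bases, gives \cref{conj:mconj}, and then \cref{thm:mapp}(2) yields \cref{thm:mainthm1}. The normalization step you spell out is exactly the reduction the paper invokes implicitly just after \cref{defn:univdns}.
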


\cref{prop:equiv} and the existence of a universal DNS line bundle implies (1). Moreover, (1) implies (2) by \cref{thm:mapp}.

\begin{cor}\label{cor:conj2pos}
    \begin{enumerate}
        \item \cref{conj:mconj2} holds in positive characteristic.
        \item \cref{thm:mainthm2} holds.
    \end{enumerate}
\end{cor}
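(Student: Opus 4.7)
My plan is to deduce both parts by combining the existence of universal DNS line bundles with good exceptional locus (\cref{thm:corudns}) with the machinery of \cref{thm:conj2prf}, exactly in parallel with how \cref{cor:conjpos} was obtained by pairing \cref{thm:corudns} with \cref{prop:equiv}.

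For part (1), I would unwind the construction behind \cref{thm:corudns} one step further. Since $\text{char }K=p>0$, by \cref{cor:psisemi} (2) each class $\psi_i\in\text{Pic}(\overline{\mathcal{M}}_{g,n+1})$ is semiample, and by \cref{cor:psisemi} (1) its exceptional locus is contained in the boundary $\partial\mathcal{M}_{g,n}$. Moreover, by \cref{prop:linecal} (2), the pullbacks $\bar{s}_i^\ast \psi_{n+1}$ (for any $n\ge 1$) and $\bar{s}_1^\ast \psi_2$ (when $n=1$) are trivial. Thus the line bundles on $\overline{\mathcal{U}}_{g,n}\simeq\overline{\mathcal{M}}_{g,n+1}$ produced in \cref{thm:corudns} satisfy the full hypotheses of \cref{thm:conj2prf} (the ``(2) case''): they are restrictions of semiample line bundles $\mathcal{L}'$ on $\overline{\mathcal{U}}_{g,n}$, their pullbacks along the universal sections are trivial, and their exceptional locus is contained in $\partial \mathcal{M}_{g,n}$. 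Applying \cref{thm:conj2prf} directly yields \cref{conj:mconj2} for every family of smooth curves over a variety $S$, which by \cref{prop:equiv2} is exactly \cref{conj:mconj2}.

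For part (2), once \cref{conj:mconj2} is established in characteristic $p$, I invoke \cref{thm:mapp2} verbatim: the hypothesis of that theorem is precisely the statement of \cref{conj:mconj2}, and its conclusion is that $\Tilde{r}(g,n)=\Tilde{r}(g)+1$ for all $n\ge 1$, with the corollary that for $g\ge 3$ and $n\ge 1$ every point of $\rm{M}_{g,n}$ lies on a complete surface. This is exactly \cref{thm:mainthm2}.

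I do not anticipate any serious obstacle here: all the difficulty has been absorbed into \cref{thm:omegasemi}, the classification in \cref{thm:ndsclas}, and the compactification argument inside \cref{thm:conj2prf}. The only points to double-check are bookkeeping items, namely (i) that the identification $\overline{\mathcal{U}}_{g,n}\simeq\overline{\mathcal{M}}_{g,n+1}$ used in \cref{thm:conj2prf} lets the exceptional-locus condition ``contained in $\partial\mathcal{M}_{g,n}$'' be interpreted compatibly with \cref{cor:psisemi}, and (ii) that the case $n=1$ is handled by $\psi_2$ rather than by $\psi_{n+1}$, which still meets the hypotheses of \cref{thm:conj2prf} because $\bar{s}_1^\ast \psi_2=\mathcal{O}_{\mathcal{M}_{g,1}}$ by \cref{prop:linecal} (2). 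With these verifications, the corollary follows by pure assembly of previously proved statements.
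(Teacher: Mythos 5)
Your proposal is correct and follows exactly the paper's own route: part (1) is obtained by feeding the semiample $\psi$-classes with trivial pullbacks along the universal sections and boundary-contained exceptional locus (\cref{cor:psisemi}, \cref{prop:linecal}) into \cref{thm:conj2prf}, and part (2) then follows from \cref{thm:mapp2}. The paper states this in one line; you have merely made the same assembly explicit (noting that for $n=1$ your $\psi_2$ is just $\psi_{n+1}$, so no separate case is really needed).
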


\cref{thm:conj2prf} implies (1). Then \cref{thm:mapp2} proves (2).

As mentioned earlier, almost every complete subvariety of $\rm{M}_g$ is constructed using the following two tools: (1) The existence of the Satake compactification, and (2) the Kodaira-Parshin Construction. Although the proof of \cref{conj:mconj} allows us to use the Kodaira-Parshin construction without any restrictions in positive characteristic, to the best of our knowledge, \cref{thm:mainthm1} is the best possible result if we only use these two tools. However, this result is still far from proving the conjecture $r(g)=g-2$. Therefore, developing a third general tool for constructing a complete subvariety of $\rm{M}_g$, or equivalently, a complete family of curves of genus $g$, is an essential and important task. One candidate for such construction is the subvariety of $\rm{M}_g$ suggested in \cite{FvdG04}, which uses the Prym map.

\subsection{Characteristic \texorpdfstring{$0$}{TEXT} case}\label{sec:char0}
In this section, we assume $\text{char }K=0$. Unlike the characteristic $p$ case, there are only a few possible universal DNS line bundles.

\begin{thm}\label{thm:impossible}
    Let $f:C\to S$ be a family of smooth curves of genus $g\ge 2$ with $n$ marked points $s_1,\cdots, s_n:S\to C$. Let $u:S\to \mathcal{M}_{g,n}$ and $v:C\to \mathcal{U}_{g,n}$ be the induced maps. Assume there exists a proper smooth curve $D$ with a map $i: D\to S$ such that the induced map $D\to \mathcal{M}_{g,n}$ is not a constant map but $D\to \mathcal{M}_g$ is a constant map.
    \begin{enumerate}    
        \item If $n=1$, then for any distinct $m_1, m_2\in \mathbb{Z}$, $v^\ast\left(m_1\psi_1+m_2\psi_2\right)$ is not a DNS line bundle.
        \item If $n\ge 2$, then $v^\ast \psi_{n+1}$ is not a DNS line bundle.
    \end{enumerate}
\end{thm}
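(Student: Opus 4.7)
The plan is to reduce to a trivialized isotrivial family over a finite cover, and then rule out any effective disjoint divisor via a cohomological obstruction that is sensitive to the characteristic of the base field.

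By \cref{prop:dnspullback}(1), if $v^*\mathcal L$ is DNS on $C$ then its pullback to $C_D:=C\times_S D$ is DNS; replacing $S$ by $D$, I may assume $S=D$ is a proper smooth curve and $C_D\to D$ is isotrivial with fibers isomorphic to a fixed curve $X$. In characteristic $0$, a finite Galois base change $p:D'\to D$ (corresponding to the monodromy of the family) trivializes the family, so $C_{D'}\cong X\times D'$, and each pulled-back section becomes the graph $\Gamma(\sigma_i)$ of some morphism $\sigma_i:D'\to X$. Pulling back once more via $p$, it suffices to show the relevant line bundles are not DNS on $X\times D'$. Since $D\to\mathcal M_{g,n}$ is non-constant while $D\to\mathcal M_g$ is constant, some $\sigma_i$ is non-constant and hence, as a map between proper smooth curves, surjective.

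By \cref{prop:linecal}(1), under this identification the line bundles become
\[
\mathcal L_2 := (v^*\psi_{n+1})|_{X\times D'} = p_X^*\omega_X \otimes \mathcal O\Bigl(\sum_{i=1}^{n}\Gamma(\sigma_i)\Bigr),
\]
\[
\mathcal L_1 := v^*(m_1\psi_1+m_2\psi_2)|_{X\times D'} = \mathcal O\bigl((m_1+m_2)\Gamma(\sigma_1)\bigr)\otimes p_X^*\omega_X^{m_2}\otimes p_{D'}^*\sigma_1^*\omega_X^{m_1}.
\]
Assume for contradiction that $\mathcal L^m\cong\mathcal O(T)$ for some $m>0$ with $T$ effective and $T\cap\Gamma(\sigma_i)=\emptyset$ for every $i$. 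A horizontal fiber $X\times\{t\}$ meets each $\Gamma(\sigma_i)$ at $(\sigma_i(t),t)$, and a vertical fiber $\{x\}\times D'$ meets $\Gamma(\sigma_i)$ precisely when $x\in\sigma_i(D')$; since some $\sigma_i$ is surjective, every vertical fiber meets the corresponding $\Gamma(\sigma_i)$. Hence no irreducible component of $T$ is a vertical or horizontal fiber, so every component of $T$ surjects onto both factors.

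The key step is to convert the existence of $T$ into a cohomological statement and rule it out. I would use the short exact sequence
\[
0\to\mathcal L^m(-\Gamma(\sigma_i))\to\mathcal L^m\to\mathcal L^m|_{\Gamma(\sigma_i)}\to 0,
\]
together with the computation (coming from \cref{prop:linecal}(2)) that $\mathcal L^m|_{\Gamma(\sigma_i)}$ is the trivial line bundle on $\Gamma(\sigma_i)\cong D'$. A DNS section of $\mathcal L^m$ must restrict to a nowhere-vanishing constant on each $\Gamma(\sigma_i)$, equivalently the connecting map $K=H^0(\Gamma(\sigma_i),\mathcal O)\to H^1(X\times D',\mathcal L^m(-\Gamma(\sigma_i)))$ must vanish on $1$. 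Using the Leray spectral sequence for $p_{D'}$ together with the decomposition $\mathrm{Pic}(X\times D')=p_X^*\mathrm{Pic}(X)\oplus p_{D'}^*\mathrm{Pic}(D')\oplus\mathrm{Hom}(J(D'),J(X))$, this extension class unwinds into a combination built from $\omega_X$ and the non-trivial pullback $\sigma_i^*:J(X)\to J(D')$. For case (1), the hypothesis $m_1\ne m_2$ enters here as exactly the condition preventing a cancellation between the $p_X^*\omega_X^{m_2}$ and $p_{D'}^*\sigma_1^*\omega_X^{m_1}$ factors.

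The main obstacle is this final cohomological computation. The analogous statement is \emph{false} in positive characteristic by \cref{thm:corudns}, so the proof must exploit a feature absent in characteristic $p$; the most natural candidate is the non-vanishing of the derivative $d\sigma_i$ of a non-constant morphism, which can be killed by a Frobenius twist in positive characteristic but not in characteristic $0$. Making the extension class explicit in terms of differential data of $\sigma_i$ and $\omega_X$, and verifying its non-vanishing when $g\ge 2$ and $\sigma_i$ is non-constant, is where I expect the real work to lie; everything before it is a formal reduction via \cref{prop:dnspullback} and \cref{prop:linecal}.
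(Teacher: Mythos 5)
Your formal reductions are sound and match the paper's: pull back to the isotrivial family over the curve, trivialize after a finite cover, identify the restricted line bundles via \cref{prop:linecal}(1) (your two displayed formulas are correct), and observe that a nowhere-vanishing restriction to each graph is forced. But the proof stops exactly where the theorem actually lives. You write that ``making the extension class explicit \dots is where I expect the real work to lie,'' and that step is not a routine verification: the restriction of $\mathcal L^m$ to $\Gamma(\sigma_i)$ itself is \emph{trivial} (this is \cref{prop:linecal}(2)), so the single exact sequence $0\to\mathcal L^m(-\Gamma(\sigma_i))\to\mathcal L^m\to\mathcal L^m|_{\Gamma(\sigma_i)}\to 0$ and its connecting map do not by themselves see any obstruction that distinguishes characteristic $0$ from characteristic $p$; the connecting map could vanish, and nothing you have set up rules that out. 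The obstruction must be detected one order higher.

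The paper's mechanism is concrete and you should compare against it. If $\mathcal L^m\cong\mathcal O(T)$ with $T$ disjoint from the section, then $\mathcal L^m$ is trivial on an open neighborhood of the section, hence trivial on the \emph{second-order} infinitesimal neighborhood $\Delta_2$ of the diagonal in $C_0\times C_0$. Keel's computations \cite[Lemmas 3.4 and 3.5]{Ke99} say that $\pi_1^\ast\Omega^1_{C_0}\otimes\mathcal O(\Delta)$ restricted to $\Delta_2$ is non-torsion in characteristic $0$ (this is precisely the first-order jet/derivative phenomenon you gesture at: in characteristic $p$ a Frobenius twist kills it), and that on $\Delta_2$ the bundle $\mathcal L_1^{m_1}\otimes\mathcal L_2^{m_2}$ reduces to $\mathcal L_1^{m_1-m_2}$ --- which is where the hypothesis $m_1\ne m_2$ enters, not through a failure of cancellation between $p_X^\ast\omega_X^{m_2}$ and $p_{D'}^\ast\sigma_1^\ast\omega_X^{m_1}$ as you suggest. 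There is also a second missing reduction: Keel's lemmas live on $C_0\times C_0$ near the diagonal, not on $X\times D'$ near a graph, so for case (1) one must descend the DNS property along the finite flat surjection $\mathrm{id}\times\sigma_1$ using the converse direction \cref{prop:dnspullback}(2), and for case (2) one restricts to the open complement of the other sections and of $T$ and applies \cref{lem:linepull} to transport torsion-ness down to an open set of $C_0\times C_0$ containing $\Delta_2$. Without these two ingredients --- the second-order neighborhood computation and the descent to the diagonal --- the argument is incomplete.
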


The strategy of the proof is to reduce everything to the case where \cite[Lemma 3.4, Lemma 3.5]{Ke99} can be applied. The following lemma makes the reduction process possible.

\begin{lem}\label{lem:linepull}
    Let $X,Y$ be smooth varieties and $f:X \to Y$ be a finite flat morphism. If $\mathcal{L}$ is a line bundle such that $f^\ast\mathcal{L}$ is a trivial line bundle. Then there exists $m>0$ such that $\mathcal{L}^{\otimes m}$ is trivial.
\end{lem}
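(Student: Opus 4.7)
The plan is to use the projection formula together with a determinant argument, which mirrors the strategy already employed in the proof of \cref{lem:univline}. Let $d$ denote the degree of the finite flat morphism $f$, so that $f_\ast \mathcal{O}_X$ is a locally free sheaf of rank $d$ on $Y$ (the irreducibility of $Y$, part of our convention on varieties, ensures this rank is globally well-defined).

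First I would apply the projection formula to the finite morphism $f$ and the line bundle $\mathcal{L}$ on $Y$, giving
$$f_\ast f^\ast \mathcal{L} \;\cong\; \mathcal{L} \otimes f_\ast \mathcal{O}_X.$$
Since $f^\ast \mathcal{L} \cong \mathcal{O}_X$ by hypothesis, the left-hand side is simply $f_\ast \mathcal{O}_X$, and we obtain an isomorphism
$$\mathcal{L} \otimes f_\ast \mathcal{O}_X \;\cong\; f_\ast \mathcal{O}_X$$
of rank $d$ locally free sheaves on $Y$. Taking determinants of both sides, and using that the determinant of $\mathcal{L}\otimes \mathcal{E}$ for a rank $d$ locally free sheaf $\mathcal{E}$ is $\mathcal{L}^{\otimes d} \otimes \det \mathcal{E}$, we conclude that $\mathcal{L}^{\otimes d} \otimes \det(f_\ast \mathcal{O}_X) \cong \det(f_\ast \mathcal{O}_X)$. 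Canceling the line bundle $\det(f_\ast \mathcal{O}_X)$ yields $\mathcal{L}^{\otimes d} \cong \mathcal{O}_Y$, so $m = d$ works.

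There is essentially no obstacle here: the argument is elementary, and in fact the smoothness of $X$ and $Y$ plays no role in the proof itself. It would suffice to have any finite flat morphism of constant rank between schemes with a well-behaved Picard group; the smoothness hypothesis is presumably kept only because it is the setting in which the lemma is to be invoked (where $X = C$ and $Y$ is a family of curves). The one mild point to verify is that the rank of $f_\ast \mathcal{O}_X$ is locally constant and hence globally constant, which follows from the flatness of $f$ together with the connectedness of $Y$.
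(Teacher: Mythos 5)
Your proof is correct, but it takes a different route from the paper. The paper proves the lemma by passing from $\operatorname{Pic}$ to $\operatorname{CH}^1$ (using smoothness to identify the two) and then invoking the pushforward--pullback formula $f_\ast f^\ast \alpha = (\deg f)\alpha$ on Chow groups, which shows the kernel of $f^\ast$ on $\operatorname{Pic}$ is torsion. You instead use the projection formula $f_\ast f^\ast \mathcal{L} \cong \mathcal{L}\otimes f_\ast\mathcal{O}_X$ together with the determinant trick --- which, as you note, is exactly the argument the paper itself uses in the proof of \cref{lem:univline} for the finite \'etale cover of $\mathcal{M}_{g,n}$, so your version has the virtue of unifying the two proofs. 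Your observation that smoothness is superfluous is accurate for your argument: you only need $f$ finite flat of constant rank (guaranteed here by irreducibility of $Y$) and the invertibility of $\det(f_\ast\mathcal{O}_X)$, whereas the paper's Chow-theoretic route genuinely uses smoothness (or at least local factoriality) to identify $\operatorname{Pic}$ with $\operatorname{CH}^1$ so that proper pushforward of cycle classes is available. Both arguments produce the same explicit exponent $m=\deg f$.
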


\begin{proof}
    It is enough to show that the kernel of the corresponding map $f^\ast:\text{Pic}(Y)\to \text{Pic}(X)$ is a torsion abelian group. Since $X$ and $Y$ are smooth varieties, their Picard groups are functorially isomorphic to $\text{CH}^1$, hence it is enough to prove the same statement for $\text{CH}^1$. This follows from the pullback-pushforward formula $f_\ast f^\ast \alpha=(\deg f)\alpha$.
\end{proof}

\begin{proof}[proof of \cref{thm:impossible}]
    Let $f': C'\to D$ be the pullback of $f$. Then $f'$ is a family of smooth curves of genus $g\ge 2$ with marked points $s_1',\cdots,s_n':D\to C'$. Let $S_1',\cdots, S_n'$ be the image of $s_1',\cdots,s_n'$ and $v':C'\to \mathcal{U}_{g,n}$ be the induced map between the family of curves. Since $D\to \mathcal{M}_g$ is a constant map, $f'$ is isotrivial. Therefore, by taking a finite cover of $D$, we may assume that $C'=C_0\times D$ and $f': C_0\times D\to D$ is the projection map. Then the marked points $s_1',\cdots,s_n'$ correspond to morphisms $r_1,\cdots, r_n:D\to C_0$. Since $D\to \mathcal{M}_{g,n}$ is not a constant map, at least one of $r_1,\cdots, r_n$ is a surjective map. Without loss of generality, assume $r_1$ is surjective.

    (1) Since $n=1$, the family of curves $f:C_0\times D\to D$ with a marked point $s_1':D\to C_0\times D$ is the pullback of the family of curves $\pi_1:C_0\times C_0\to C_0$ with a marked point given by the diagonal map $\Delta:C_0\to C_0\times C_0$ along the map $r_1:D\to C_0$. Let $u_0:C_0\to \mathcal{M}_{g,1}$ and $v_0:C_0\times C_0\to \mathcal{U}_{g,1}$ be the associated morphisms. Assume $v^\ast\left(m_1\psi_1+m_2\psi_2\right)$ is a DNS line bundle. By \cref{prop:dnspullback} (1), $\left(v'\right)^\ast\left(m_1\psi_1+m_2\psi_2\right)$ is also a DNS line bundle on $C_0\times D$. Since $r_1$ is a surjective morphism between proper smooth curves, it is a finite flat map. Hence, by \cref{prop:dnspullback} (2), $v_0^\ast\left(m_1\psi_1+m_2\psi_2\right)$ is also a DNS line bundle on $C_0\times C_0$. By \cref{prop:linecal} (1), 
    \[v_0^\ast\left(m_1\psi_1+m_2\psi_2\right)=\left(\pi_1^\ast\Omega_{C_0/K}\otimes \mathcal{O}(\Delta)\right)^{\otimes m_1}\otimes \left(\pi_2^\ast\Omega_{C_0/K}\otimes \mathcal{O}(\Delta)\right)^{\otimes m_2}. \]
    Let $\mathcal{L}_i=\pi_i^\ast\Omega_{C_0/K}\otimes \mathcal{O}(\Delta)$ for $i=1,2$. If $\mathcal{L}_1^{\otimes m_1}\otimes \mathcal{L}_2^{\otimes m_2}$ is a DNS line bundle, then there exists $d>0$ such that $\mathcal{L}_1^{\otimes dm_1}\otimes \mathcal{L}_2^{\otimes dm_2}$ is equivalent to an effective Cartier divisor $T$ on $C_0\times C_0$, whose support does not intersect with $\Delta$. Then there exists an open subset $U$ of $C_0\times C_0$, which contains $\Delta$, such that the restriction of $\mathcal{L}_1^{\otimes dm_1}\otimes \mathcal{L}_2^{\otimes dm_2}$ on $U$ is trivial. Since $U$ contains $\Delta$, it also contains its second-order neighborhood $\Delta_2$. Hence, that line bundle is also trivial on $\Delta_2$. By \cite[Lemma 3.5]{Ke99}, on $\Delta_2$, $\mathcal{L}_1^{\otimes dm_1}\otimes \mathcal{L}_2^{\otimes dm_2}\simeq \mathcal{L}_1^{\otimes d(m_1-m_2)}$. By \cite[Lemma 3.4]{Ke99}, $\mathcal{L}_1$ is not torsion, so $\mathcal{L}_1^{\otimes dm_1}\otimes \mathcal{L}_2^{\otimes dm_2}$ is not a trivial line bundle on $\Delta_2$ unless $m_1=m_2$. This is a contradiction. Hence $v^\ast\left(m_1\psi_1+m_2\psi_2\right)$ is not a DNS line bundle if $m_1\ne m_2$.

    (2) Assume $v^\ast \psi_{n+1}$ is a DNS line bundle. Then by \cref{prop:dnspullback} (1), $\left(v'\right)^\ast \psi_{n+1}$ is also a DNS line bundle. Then there exists $m>0$ and an effective Cartier divisor $T$ on $C_0\times D$ linearly equivalent to $\left(v'\right)^\ast \psi_{n+1}^{\otimes m}$ such that support of $T$ does not intersects with $S_1',\cdots, S_n'$. Consider $f':C_0\times D\to D$ and $s_1':D\to C_0\times D$ as a family of genus $g\ge 2$ curves with a marked point. Note that by the definition of DNS line bundle, if $\left(v'\right)^\ast \psi_{n+1}$ is a DNS line bundle with respect to a family of curves $f':C_0\times D\to D$ with $n$ marked points $s_1',\cdots, s_n':D\to C_0\times D$, then it is also a DNS line bundle with respect to a family of curves $f':C_0\times D\to D$ with a marked point $s_1':D\to C_0\times D$. This is the pullback of the family of curves $\pi_1:C_0\times C_0\to C_0$ with a marked point given by the diagonal map $\Delta: C_0\to C_0\times C_0$ along the map $r_1:D\to C_0$. Let $R$ be the set-theoretic union of $S_2',\cdots, S_n'$ and the support of $T$. Note that $R$ does not intersect $S_1'$. Moreover, consider the map $\text{id}_{C_0}\times r_1:C_0\times D\to C_0\times C_0$ between the family of curves. It is easy to see that $(\text{id}_{C_0}\times r_1)(R)$ is a closed subset of $C_0\times C_0$ that does not meet the diagonal. Let $U$ be the complement of $(\text{id}_{C_0}\times r_1)(R)$ in $C_0\times C_0$ and $V=(\text{id}_{C_0}\times r_1)^{-1}(U)$. Then, $U$ contains the diagonal. Moreover, $V$ and $U$ are smooth surfaces, and $r:=(\text{id}_{C_0}\times r_1)|_V$ is a finite flat map between them. Hence, we have a diagram.

    \[ \begin{tikzcd}
        V\arrow[r, hook]\arrow[d, "r"] &  C_0\times D \arrow[d, "\text{id}_{C_0}\times r_1"]\arrow[r, "\pi_2"] & D \arrow[d, "r_1"] \\
        U\arrow[r, hook ] &  C_0\times C_0 \arrow[r, "\pi_2" ] & C_0
    \end{tikzcd} \]
    
    Since
    \[ \left(v'\right)^\ast \psi_{n+1}\simeq \Omega_{C_0\times D/D}^1\left(S_1'+\cdots+S_n' \right) \]
    by \cref{prop:linecal} (1), we have
    \[ \left(v'\right)^\ast \psi_{n+1}|_V\simeq  \Omega_{C_0\times D/D}^1\left(S_1'\right)|_V\simeq r^\ast \left( \pi_2^\ast\Omega_{C_0/k}^1\otimes \mathcal{O}\left(\Delta\right)\right).  \]
    This is because $V$ is contained in the complement of $S_2',\cdots, S_n'$. However, since $\left(v'\right)^\ast \psi_{n+1}^{\otimes m}\simeq \mathcal{O}(T)$ and $V$ are contained in the complement of the support of $T$, $\left(v'\right)^\ast \psi_{n+1}^{\otimes m}|_V$ is trivial. Therefore, if we let $\mathcal{L}=\pi_2^\ast\Omega_{C_0/k}^1\otimes \mathcal{O}\left(\Delta\right)|_U$, then $r^\ast\mathcal{L}^{\otimes m}$ is a trivial line bundle. By \cref{lem:linepull}, $\mathcal{L}$ is a torsion element in the Picard group of $U$. Since $U$ contains the diagonal, it contains the second neighborhood $\Delta_2$ of the diagonal. We have
    \[ \mathcal{L}|_{\Delta_2}=\pi_2^\ast\Omega_{C_0/k}^1\otimes \mathcal{O}\left(\Delta\right)|_{\Delta_2}, \]
    but this is nontorsion by \cite[Lemma 3.4]{Ke99}. This is a contradiction. Therefore, $v^\ast \psi_{n+1}^{\otimes m}$ is not a DNS line bundle.
\end{proof}

\begin{cor}\label{cor:nodns}
    If $g\ge 3$ and $n\ge 2$, then there is no weakly universal DNS line bundle. If $g\ge 3$ and $n=1$, positive multiples of $\psi_1+\psi_2$ are the only possible weakly universal DNS line bundles.
\end{cor}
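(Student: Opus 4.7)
The strategy combines the classification argument of \cref{thm:ndsclas} with the obstruction of \cref{thm:impossible} applied to an isotrivial test family, together with a positivity check for the $n=1$ case. Suppose $\mathcal{L}\in\mathrm{Pic}(\mathcal{U}_{g,n})$ is weakly universal DNS.

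First, I would verify that the proof of \cref{thm:ndsclas} carries over to the weakly universal setting. The DNS condition applied to every proper normal variety $u:S\to\mathcal{M}_{g,n}$ still yields $u^*\bar s_i^*\mathcal{L}^{\otimes m}$ trivial; to conclude that $\bar s_i^*\mathcal{L}$ itself is trivial on $\mathcal{M}_{g,n}$, the non-proper \'etale cover from \cref{lem:covmod}~(1) used in \cref{lem:univline} must be replaced by the proper finite flat surjection $\overline{Y}\to\overline{\mathcal{M}}_{g,n}$ of \cref{lem:covmod}~(2), passing to a normalization (finite in characteristic zero); alternatively, one may test numerically against sufficiently many complete curves in $\mathcal{M}_{g,n}$ and use the torsion-freeness of $\mathrm{Pic}(\mathcal{M}_{g,n})$ for $g\ge 3$ (\cref{thm:acline}). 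Then \cref{prop:linecal}~(2) forces $\mathcal{L}$ to be a multiple of $\psi_{n+1}$ when $n\ge 2$, and of the form $a\psi_1+b\psi_2$ when $n=1$.

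Next, I would construct an isotrivial family over a proper smooth curve $D$ satisfying the hypothesis of \cref{thm:impossible}. For $n=1$, take $D=C_0$ a smooth projective genus $g$ curve with the diagonal marked point. For $n\ge 2$, take $C_0$ equipped with a free action of a finite group of order at least $n$ (cyclic \'etale covers of lower-genus curves handle $n\le g-1$; combining group actions or choosing complete curves in $C_0^n$ disjoint from the big diagonal handles larger $n$), let $D=C_0$, and use the $n$ pairwise disjoint sections given by distinct group elements. \cref{thm:impossible}~(2) rules out any nonzero multiple of $\psi_{n+1}$ as DNS on this family, eliminating all candidates for $n\ge 2$; for $n=1$, \cref{thm:impossible}~(1) forces $a=b$, so $\mathcal{L}=a(\psi_1+\psi_2)$. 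Positivity of $a$ follows because $a=0$ makes $\mathcal{L}$ trivial (and trivial bundles are not DNS on any proper connected family of smooth curves, since the only effective divisor linearly equivalent to $\mathcal{O}$ is the zero divisor), while $a<0$ gives $v^*\mathcal{L}$ negative fiberwise degree (by \cref{prop:linecal}~(1), $v^*(\psi_1+\psi_2)$ has fiberwise degree $2g>0$), which is impossible for an effective-divisor representation.

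The main obstacle is the first step: adapting \cref{thm:ndsclas} to the weakly universal setting. Since $\mathcal{M}_{g,n}$ admits no finite cover by a proper normal variety, one must descend triviality statements through the compactified cover $\overline{Y}\to\overline{\mathcal{M}}_{g,n}$ or test against an adequate supply of complete curves in $\mathcal{M}_{g,n}$. A secondary technicality is exhibiting the required isotrivial test family in Step 2 for every $(g,n)$ with $g\ge 3$ and $n\ge 2$, which may require case analysis involving the automorphism groups of specific curves.
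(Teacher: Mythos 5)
Your overall strategy --- classify the candidates via \cref{thm:ndsclas}, kill them with \cref{thm:impossible} applied to a complete isotrivial test family, then sort out the sign in the $n=1$ case --- is exactly the paper's, and your treatment of $a=0$ and $a<0$ for $n=1$ matches the paper's (which rules out negative multiples by testing against the constant family over a point). The genuine gap is your test family for $n\ge 2$. A free action of a finite group $G$ on a smooth curve $C_0$ of genus $g\ge 2$ is \'etale onto its quotient, so Riemann--Hurwitz gives $2g-2=|G|(2g'-2)$ with quotient genus $g'\ge 2$, hence $|G|\le g-1$; your primary construction therefore produces at most $g-1$ disjoint sections (for $g=3$, only $n=2$), while the corollary needs every $n\ge 2$. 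Your fallback --- ``choosing complete curves in $C_0^n$ disjoint from the big diagonal'' --- is precisely the nontrivial existence statement, not a construction, and it is not settled by case analysis of automorphism groups. The paper handles all $n$ uniformly with one trick: choose $X$ of genus $g$ admitting a nonconstant map $X\to E$ to an elliptic curve, pull back the translate map $E\to E^n$, $x\mapsto (x+P_1,\cdots,x+P_n)$ with distinct $P_i$, along $X^n\to E^n$, and normalize; the resulting complete curve $D$ maps to $X^n$ missing all diagonals, giving an isotrivial family with $n$ disjoint, non-constant sections. You should replace your Step 2 with this (or an equivalent) construction.

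On your ``main obstacle'': you are right that \cref{thm:ndsclas} is stated for universal rather than weakly universal DNS line bundles, and the paper itself cites it without comment, so flagging this is fair. But your first proposed fix does not work as written: normalizing $\overline{Y}$ yields a proper cover of $\overline{\mathcal{M}}_{g,n}$, whereas the induced cover of $\mathcal{M}_{g,n}$ (which is where the sections $\bar{s}_i$ land and where the weakly-universal hypothesis would have to be applied) is no longer proper. Your second fix --- pairing against complete curves and invoking torsion-freeness of $\text{Pic}(\mathcal{M}_{g,n})$ from \cref{thm:acline} --- is the right idea, but it requires exhibiting enough complete curves in $\mathcal{M}_{g,n}$ to separate the classes $\lambda,\psi_1,\cdots,\psi_n$, which you would need to supply explicitly.
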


\begin{proof}
    First, we will show for every $n\ge 1$ and $g\ge 3$, there exists a complete family $f: C\to S$ that satisfies the condition of \cref{thm:impossible}. Let $X$ be a curve of genus $g$ that admits a nonconstant map $X\to E$ to an elliptic curve $E$. Let $P_1,\cdots, P_n$ be distinct points of $E$. Let $E\to E^n$ be the map defined by $x\mapsto (x+P_1,\cdots, x+P_n)$. Consider the pullback of $E\to E^n$ by $X^n\to E^n$. This is a map $t:D\to X^n$ from a complete $1$-dimensional $K$-scheme $D$. Note that the coordinates of $t(d)$ are distinct for any $d\in D$. Hence, by taking the normalization of $D$, this defines the family we wanted.

    If $n\ge 2$, this follows from \cref{thm:ndsclas} and \cref{thm:impossible}. If $n=1$, the same proposition and Theorem imply that the only possible universal DNS line bundles are positive or negative multiples of $\psi_1+\psi_2$. However, it is easy to see that negative multiples of $\psi_1+\psi_2$ are not even a DNS line bundle for a trivial family of the smooth curve with marked point $\text{Spec }K\to \mathcal{M}_{g,1}$. Hence the only possibility is positive multiples of $\psi_1+\psi_2$.
\end{proof}

One of the surprising implications of this corollary is that \cref{conj:mconj} is false if $n$ is large enough, even though it is always true in characteristic $p$.

\begin{thm}\label{thm:conjfalse}
    If $g\ge 3$ and $n>2g+2$, \cref{conj:mconj} is false.
\end{thm}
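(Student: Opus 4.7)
To show \cref{conj:mconj} fails in characteristic $0$ when $n > 2g + 2$, I plan to exhibit an explicit family $C \to S$ over a smooth variety admitting no DNS line bundle; by \cref{prop:dnsconj} and \cref{prop:general} this will contradict \cref{conj:mconj}. Fix a very general smooth projective curve $C_0$ of genus $g \ge 3$, so that $\text{Aut}(C_0)$ is trivial and $\text{End}(J(C_0)) = \mathbb{Z}$. Let $S = \text{Conf}_n(C_0) \subset C_0^n$ be the open subvariety of ordered $n$-tuples of pairwise distinct points and take the trivial family $f: C = C_0 \times S \to S$ with coordinate sections $s_k(q_1, \ldots, q_n) = (q_k, q_1, \ldots, q_n)$. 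Under the identification $C \subset C_0^{n+1} \setminus \bigcup_{2 \le i < j \le n+1} \Delta_{ij}$, the section image $S_k$ is precisely $\Delta_{1, k+1} \cap C$.

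The first step is a Picard-group computation: under the very-general hypothesis on $C_0$, the N\'eron-Severi group $N^1(C_0^{n+1})$ is freely generated by the coordinate fibers $F_i$ and the diagonals $[\Delta_{ij}]$, so the quotient $N^1(C)$ is freely generated by $F_1, \ldots, F_{n+1}$ together with $[S_k] = [\Delta_{1,k+1}]$ for $k = 1, \ldots, n$. Using $\text{Alb}(\text{Conf}_n(C_0)) = J(C_0)^n$ and the standard decomposition of $\text{Pic}$ of a product, one also obtains $\text{Pic}^0(C) \cong J(C_0) \oplus J(C_0)^n$, as a sum of pullbacks from the two factors.

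Now suppose for contradiction that $\mathcal{L}$ is a DNS line bundle on $C$, so $\mathcal{L}^{\otimes m} \cong \mathcal{O}(T)$ with $T$ effective and disjoint from every $S_k$. Then $s_j^* \mathcal{L}^{\otimes m}$ is trivial, hence $s_j^* \mathcal{L} = 0$ in $\text{Pic}(S) \otimes \mathbb{Q}$ for every $j = 1, \ldots, n$. Writing $[\mathcal{L}] = \sum_i b_i F_i + \sum_k c_k [S_k]$ in $N^1(C) \otimes \mathbb{Q}$ and computing the pullbacks via the adjunction identity $s_j^* \mathcal{O}(S_j) = -p_j^* \Omega^1_{C_0}$ (where $p_j: S \to C_0$ is the $j$-th coordinate projection) and $s_j^* \mathcal{O}(S_k) = 0$ for $k \ne j$, the resulting linear system forces $b_i = 0$ for $2 \le i \le n+1$, $c_1 = \cdots = c_n =: c$, and $b_1 = (2g-2)c$; by \cref{prop:linecal} this means $[\mathcal{L}] \equiv c \cdot v^* \psi_{n+1}$ in $N^1(C) \otimes \mathbb{Q}$. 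A parallel argument for $\text{Pic}^0$, using that $p_j^*: J(C_0) \hookrightarrow J(C_0)^n$ is the inclusion into the $j$-th factor, shows that the $\text{Pic}^0$ part of $\mathcal{L}$ is torsion, so $\mathcal{L} \equiv c \cdot v^* \psi_{n+1}$ in $\text{Pic}(C) \otimes \mathbb{Q}$.

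The contradiction comes from \cref{thm:impossible}. Pick a smooth projective curve $D$ with a non-constant morphism to a smooth compactification of $S$ such that at least one coordinate $p_j|_D$ is non-constant (a general complete intersection curve works). Since $\text{Aut}(C_0) = 1$, the induced map $D \to \mathcal{M}_{g,n}$ is non-constant, while $D \to \mathcal{M}_g$ is constant because the family is trivial. By \cref{prop:dnspullback}(1), the pullback of $\mathcal{L}$ to $C \times_S D$ is DNS, and by the previous step it equals a nonzero multiple of $v^* \psi_{n+1}$ up to torsion; but \cref{thm:impossible}(2) asserts that no nonzero multiple of $v^* \psi_{n+1}$ can be DNS in this setting, yielding the required contradiction. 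The main technical obstacle is the Picard-group analysis — in particular the treatment of the $\text{Pic}^0$ contribution — and the hypothesis $n > 2g + 2$ is used there to guarantee that the $n$ pullback constraints from the sections are numerous enough to pin down every component of $\mathcal{L}$ and leave no room for exotic DNS line bundles outside the $v^*\psi_{n+1}$ direction.
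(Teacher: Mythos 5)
Your proposal takes a genuinely different route from the paper, but it has a gap at the point where you invoke \cref{thm:impossible}. That theorem requires a \emph{proper smooth} curve $D$ with a morphism $i\colon D\to S$ landing in $S$ itself (so that the pulled-back marked points stay disjoint and $D\to\mathcal{M}_{g,n}$ is defined), not merely a map to a compactification of $S$. A general complete intersection curve in $C_0^{n}$ has positive intersection with every nonzero effective divisor, in particular with each diagonal $\Delta_{ij}$, so it does not lie in $\mathrm{Conf}_n(C_0)$. Worse, your two hypotheses are in tension: the N\'eron--Severi computation needs $C_0$ very general (so that $\mathrm{End}(J(C_0))=\mathbb{Z}$ and the fibers and diagonals freely generate $N^1(C_0^{n+1})$), while the paper's only construction of a complete curve inside a configuration space avoiding all diagonals (proof of \cref{cor:nodns}) uses a curve admitting a nonconstant map to an elliptic curve --- which a very general curve of genus $g\ge 3$ does not. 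The existence of a complete curve in $\mathrm{Conf}_n(C_0)$ with nonconstant projection, for very general $C_0$, is precisely the kind of statement that needs its own proof, and without it \cref{thm:impossible} cannot be applied to your family. (Two smaller omissions: \cref{thm:impossible}(2) treats $v^\ast\psi_{n+1}$ itself, and positive multiples follow from the remark after \cref{defn:dnsline}, but your class is $c\cdot v^\ast\psi_{n+1}$ up to torsion with $c\in\mathbb{Q}$; the cases $c<0$ and $c=0$ need the separate easy arguments that a negative-degree bundle on a fiber cannot be $\mathcal{O}(T)$ with $T$ effective, and that $H^0(C,\mathcal{O}_C)=K$.)

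A telling symptom is that the hypothesis $n>2g+2$ plays no role in your argument: the linear algebra only needs $n\ge 2$, so a complete version of your proof would disprove \cref{conj:mconj} for all $n\ge 2$, contradicting the paper's explicit statement that the range $2\le n\le 2g+2$ is open. The paper uses $n>2g+2$ for something entirely different, and its proof is much shorter: in that range $\mathcal{M}_{g,n}$ is itself a smooth \emph{variety}, so the identity map $\mathrm{id}\colon\mathcal{M}_{g,n}\to\mathcal{M}_{g,n}$ is an admissible instance of \cref{conj:mconj}; by \cref{prop:dnsconj} the conjecture for this instance is equivalent to the existence of a DNS line bundle on $\mathcal{U}_{g,n}$, and by \cref{prop:dnspullback}(1) any such bundle pulls back to every family and hence is automatically a universal DNS line bundle, contradicting \cref{cor:nodns}. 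To salvage your strategy you would need either to actually produce a complete curve in $\mathrm{Conf}_n(C_0)$ for your chosen $C_0$, or to bypass \cref{thm:impossible} by running Keel's second-order-neighborhood argument directly on the thickening of $S_1$ inside $C_0\times S$; either requires substantial additional work.
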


\begin{proof}
    If $n>2g+2$, then $\mathcal{M}_{g,n}$ is indeed a smooth variety (cf. \cite[Exercise I.F-8]{ACGH13}). Consider \cref{conj:mconj} when $u$ is just the identity map $\text{id}:\mathcal{M}_{g,n}\to \mathcal{M}_{g,n}$. Since $\mathcal{M}_{g,n}$ is a smooth variety, by \cref{prop:dnsconj}, \cref{conj:mconj} holds if and only if there exists a DNS line bundle on $\mathcal{U}_{g,n}$. However, by \cref{prop:dnspullback} (1), a DNS line bundle on $\mathcal{U}_{g,n}$ is automatically a universal DNS line bundle. This contradicts \cref{cor:nodns}.
\end{proof}

This reflects a significant difference between characteristic $p$ and characteristic $0$. In the characteristic $p$ case, one can universally find a new section. However, in the characteristic $0$ case, there is a family of smooth curves that do not have a potential new section. 

\begin{rmk}\label{rmk:KeelSadun} 
This phenomenon is analogous to the case of the moduli space of principally polarized abelian varieties. Let $\rm{A}_g$ be the (coarse) moduli space of $g$-dimensional principally polarized abelian varieties. In \cite{vdG99}, van der Geer proved that the dimension of a complete subvariety of $\rm{A}_g$ is $\le \frac{g(g-1)}{2}$. If $\text{char }K=p$, then there is a $\frac{g(g-1)}{2}$-dimensional complete subvariety, given by the locus of abelian varieties with $p$-rank $0$. However, Keel and Sadun \cite{KS03} proved that there is no such complete subvariety if $\text{char }K=0$ and $g\ge 3$. \cref{thm:conjfalse} can be thought of as evidence of a similar phenomenon in $\rm{M}_g$: it suggests that it is much harder to find a complete subvariety of $\rm{M}_g$ in characteristic zero.
\end{rmk}

\cref{thm:conjfalse} raises an interesting question: When does \cref{conj:mconj} hold in characteristic $0$? As mentioned above, we can modify \cref{conj:mconj} in many ways. These modifications have significant applications, including the complete subvariety problem.

For example, if $2\le n\le 2g+2$, we can still attempt to prove \cref{conj:mconj}. By \cref{prop:general} (1), it is enough to examine \cref{conj:mconj} for a finite flat covering $S\to \mathcal{M}_{g,n}$ given by a smooth variety $S$. The natural example to consider is the moduli space $\mathcal{M}_{g,n}(L)$ of smooth curves with marked points and a level structure. Hence, we need to classify DNS line bundles on $\mathcal{M}_{g,n}(L)$.

\begin{prob}\label{prob:level}
The following steps give an approach to examine \cref{conj:mconj} for $\mathcal{M}_{g,n}(L)$.
    \begin{enumerate}
        \item Calculate the Picard group of $\mathcal{M}_{g,n}(L)$ and the universal curve over it.
        \item Obtain a good description of line bundles on $\mathcal{M}_{g,n}(L)$ and its universal curve, analogous to that given by the Hodge class and psi classes on $\mathcal{M}_{g,n}$.
        \item Examine which gives rise to DNS line bundles.
    \end{enumerate}
\end{prob}

Note that if $n=0$, \cref{prob:level} (1) was done in \cite{Put12}.

Returning to the discussion, we can restrict our attention to proper varieties $S$. As one might expect, this is a strong restriction, and any argument like \cref{thm:conjfalse} seems hopeless. Also, to find complete subvarieties on $\rm{M}_{g,n}$, it is enough to consider this. However, even in this case, since we don't know the existence of a weakly universal DNS line bundle, and the obvious candidates do not work by \cref{cor:nodns}, this seems challenging in general.

Note that to find complete subvarieties of $\rm{M}_{g,n}$, we may restrict our attention to a very special family of smooth curves. For example, the family will be described in \cref{rmk:zaal}, where $S$ is a curve, is sufficient to find a complete surface in $\rm{M}_6$.

Perhaps the most important case is $n=1$, since here we have a candidate for a universal DNS line bundle. We conjecture the following.

\begin{conj}\label{conj:onlyhope}
\begin{enumerate}
    \item{Strong version:} $\psi_1+\psi_2$ is a semiample line bundle on $\overline{\mathcal{M}}_{g,2}$.
    \item{Weak version:} $\psi_1+\psi_2+\delta$ is a semiample line bundle on $\overline{\mathcal{M}}_{g,2}$ for some $\delta$ which is a linear combination of boundary divisors not including $\delta_{0, \left\{1,2\right\}}$.
\end{enumerate} 
\end{conj}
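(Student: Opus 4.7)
The plan is to attack the strong version by factoring through the forgetful map $\pi = \pi_2: \overline{\mathcal{M}}_{g,2}\to \overline{\mathcal{M}}_{g,1}$ dropping the second marked point. Combining the pullback identity $\psi_1 = \pi^*\psi_1 + \delta_{0,\{1,2\}}$ noted in the proof of \cref{prop:linecal}(1) with $\psi_2 = \omega_\pi(\overline{S}_1)$, one can rewrite
\[
\psi_1 + \psi_2 \;=\; \pi^*\psi_1 \;+\; \bigl(\delta_{0,\{1,2\}} + \psi_2\bigr).
\]
On each smooth fiber $C$ of $\pi$ over $(C,p_1)\in \overline{\mathcal{M}}_{g,1}$, the bracketed summand restricts to $\mathcal{O}_C(2p_1)\otimes \omega_C$, a line bundle of degree $2g>0$, hence $\pi$-ample. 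The first step is to promote this pointwise fact to genuine $\pi$-relative semiampleness across all of $\overline{\mathcal{M}}_{g,2}$ (including nodal fibers); concretely, I would show that for $m\gg 0$ the sheaf $\pi_*\bigl(\delta_{0,\{1,2\}}+\psi_2\bigr)^{\otimes m}$ is locally free and $\pi$-very ample, by analyzing how the divisor meets each boundary stratum of $\pi$.

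Granting such relative positivity, the strong conjecture reduces to semiampleness of $\psi_1$ on $\overline{\mathcal{M}}_{g,1}$ in characteristic zero, which is itself an open question. By \cref{cor:psisemi}(1), $\psi_1$ is nef, big, with exceptional locus contained in the boundary, so one is tempted to imitate the characteristic-$p$ argument of \cref{cor:psisemi}(2) by inducting on the codimension of boundary strata. But this is precisely where Keel's criterion breaks down in characteristic zero: the second assertion of \cref{thm:omegasemi} uses Frobenius crucially, and the naive inductive statement ``nef and big with semiample restriction to $\partial$ implies semiample'' is known to fail in characteristic zero, as witnessed by the Keel--Sadun counterexamples alluded to in \cref{rmk:KeelSadun}.

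The principal obstacle is therefore to find a genuine characteristic-zero substitute for Keel's criterion, and the natural candidate is the Kawamata--Viehweg basepoint-free theorem. Concretely, one would seek to write
\[
\psi_1+\psi_2 \;=\; K_{\overline{\mathcal{M}}_{g,2}} + A + B
\]
on the coarse moduli space with $A$ nef and big $\mathbb{Q}$-divisor and $(\overline{\mathcal{M}}_{g,2},B)$ a klt pair; Mumford's formula for the canonical class in terms of $\lambda$, the $\psi_i$, and boundary classes provides a starting point, but verifying the klt hypothesis on the coarse moduli space, with its cyclic quotient singularities along the boundary, is delicate. It is exactly for this klt verification that the weak version of \cref{conj:onlyhope} is likely more accessible: the freedom to add a boundary class $\delta$ \emph{avoiding} $\delta_{0,\{1,2\}}$ should provide precisely the flexibility needed to engineer such a Zariski-type decomposition where both the nef-big and the klt conditions hold simultaneously. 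The task then becomes an essentially combinatorial search for boundary coefficients satisfying a system of numerical inequalities on $\overline{\mathcal{M}}_{g,2}$, which while still nontrivial is considerably more tractable than the strong version.
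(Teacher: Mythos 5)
You should first be clear that \cref{conj:onlyhope} is stated in the paper as a \emph{conjecture}: no proof is given or claimed there, only consequences (\cref{thm:psisemiconcl}) and circumstantial evidence (\cref{thm:keelsemi}, \cref{rmk:zaal}). So there is no argument in the paper to compare yours against; the only question is whether your proposal closes the problem, and it does not. By your own account the two pillars of the argument --- semiampleness of $\psi_1$ on $\overline{\mathcal{M}}_{g,1}$ in characteristic zero, and a characteristic-zero replacement for Keel's criterion --- are themselves open, so what you have is a research program, not a proof. Moreover, your first reduction is invalid as stated: writing $\psi_1+\psi_2=\pi^\ast\psi_1+\Omega_{\pi}(2\overline{S}_1)$ with the second summand $\pi$-ample (which it is, being fiberwise ample on stable $1$-pointed curves) does not reduce semiampleness of the sum to semiampleness of $\psi_1$ on $\overline{\mathcal{M}}_{g,1}$. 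The fibers of the fibration that a semiample $\psi_1$ would define are larger than the fibers of $\pi$, and a $\pi$-ample class need not be semiample on them; the standard globalization of relative ampleness requires adding a large multiple of a class pulled back from the base, whereas your decomposition fixes the coefficient of $\pi^\ast\psi_1$ to be $1$. And even granting the reduction, its target is, if anything, the same open problem one level down.

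The one genuinely new idea you propose --- a Kawamata--Viehweg decomposition $\psi_1+\psi_2+\delta'=K_{\overline{\mathcal{M}}_{g,2}}+A+B$ with $A$ nef and big and $B$ effective klt --- fails a basic numerical check before any klt subtleties arise. Mumford's formula gives $K_{\overline{\mathcal{M}}_{g,2}}=13\lambda+\psi_1+\psi_2-2\delta$ up to corrections supported on the boundary, so the required class is $A+B=-13\lambda+2\delta+\delta'$. Restrict to a general fiber $F\cong C\times C$ of the forgetful map $\overline{\mathcal{M}}_{g,2}\to\overline{\mathcal{M}}_g$: there $\lambda$ and every boundary class other than $\delta_{0,\{1,2\}}$ vanish, $\delta'$ avoids $\delta_{0,\{1,2\}}$ by hypothesis, and $\delta_{0,\{1,2\}}$ restricts to the diagonal $\Delta$, so $(A+B)|_F=2\Delta$ with $\Delta^2=2-2g<0$. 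Writing $B|_F=c\Delta+B''$ with $B''$ effective and not containing $\Delta$, one has $A|_F=(2-c)\Delta-B''$; if $c<2$ then $A|_F\cdot\Delta\le(2-c)(2-2g)<0$, contradicting nefness, while if $c\ge 2$ then $-A|_F$ is effective and hence $(A|_F)^2\le 0$, contradicting the fact that a nef and big divisor restricts to a nef and big divisor on a general fiber. So no such $A$ and $B$ exist for any admissible $\delta'$: the boundary correction is invisible on $F$ and cannot compensate the $-13\lambda$ term, and the ``combinatorial search for boundary coefficients'' you propose has no solutions. A proof of \cref{conj:onlyhope} must therefore come from a mechanism beyond both Keel's positive-characteristic criterion and the characteristic-zero basepoint-free theorem, which is exactly why the paper leaves it open and offers only \cref{thm:keelsemi} as evidence.
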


Note that the second assertion implies that $\psi_1+\psi_2$ is semiample on $\mathcal{U}_{g,1}$. This conjecture implies some meaningful conclusions on complete subvarieties of moduli space of curves.

\begin{thm}\label{thm:psisemiconcl}
    \cref{conj:onlyhope} (1) (resp. (2)) implies $\psi_1+\psi_2$ is a (resp. weakly) universal DNS line bundle on $\mathcal{U}_{g,1}$. Hence, this implies \cref{conj:mconj} and \cref{conj:mconj2} for $n=1$ and $g\ge 3$ (resp. for proper variety $S$). In particular, either of them implies that $\rm{M}_6$ contains a complete surface, and every closed point of $\rm{M}_{g,2}$ is contained in a complete subsurface.
\end{thm}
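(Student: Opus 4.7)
The plan is to feed the semiampleness hypothesis of Conjecture \ref{conj:onlyhope} into Theorem \ref{thm:seminds} to produce a (weakly) universal DNS line bundle, invoke Theorem \ref{thm:conj2prf} for the corresponding lifting conjectures, and then combine with the Satake compactification and the Kodaira-Parshin construction for the two geometric consequences. The central computation is $\bar s_1^\ast(\psi_1+\psi_2)=\mathcal O_{\mathcal M_{g,1}}$, which is immediate from Proposition \ref{prop:linecal} (2) parts (c) and (d) with $n=1$, $i=1$. Under the identification $\overline{\mathcal U}_{g,1}=\overline{\mathcal M}_{g,2}$, Conjecture \ref{conj:onlyhope} (1) says $\psi_1+\psi_2$ is semiample on $\overline{\mathcal U}_{g,1}$, so Theorem \ref{thm:seminds} (2) shows that $(\psi_1+\psi_2)|_{\mathcal U_{g,1}}$ is universal DNS, and Proposition \ref{prop:dnsconj} then gives Conjecture \ref{conj:mconj} for $n=1$, $g\ge 3$.

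For the passage to Conjecture \ref{conj:mconj2} via Theorem \ref{thm:conj2prf}, I must check that the exceptional locus of $\psi_1+\psi_2$ in $\overline{\mathcal M}_{g,2}$ lies in the boundary. This follows from Corollary \ref{cor:psisemi} (1): if $\psi_1|_V$ is big then $(\psi_1+\psi_2)|_V = \text{big}+\text{nef}$ is still big, so the exceptional locus of $\psi_1+\psi_2$ is contained in that of $\psi_1$, which is already in the boundary. For the weak version, the Notation section tells us that $\mathcal U_{g,1}$ is the complement in $\overline{\mathcal M}_{g,2}$ of every boundary component except $\delta_{0,\{1,2\}}$, so any boundary class $\delta$ not involving $\delta_{0,\{1,2\}}$ restricts trivially; hence $(\psi_1+\psi_2)|_{\mathcal U_{g,1}}=(\psi_1+\psi_2+\delta)|_{\mathcal U_{g,1}}$ is semiample on the open stack, and Theorem \ref{thm:seminds} (1) together with Theorem \ref{thm:conj2prf} yields the weakly universal DNS property and the proper-base forms of both lifting conjectures.

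Finally, for the two geometric conclusions: to build a complete surface in $\rm M_6$ (in characteristic $0$), I would start from a complete non-isotrivial family $C_0\to S$ over a complete curve $S\subset\rm M_3$ (Satake, \cref{Pthm:results} (1)), form the non-degenerate $2$-dimensional family $C_0\times_S C_0\to C_0$ with diagonal section, apply the just-established Conjecture \ref{conj:mconj} at $(g,n)=(3,1)$ to lift this to a family in $\mathcal M_{3,2}$, and then apply Kodaira-Parshin \ref{Kthm:kodaira} with double cover to obtain a complete $2$-dimensional family of genus $2\cdot 3+1-1=6$ curves, exactly as in the proof of Theorem \ref{thm:mapp}. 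For the other consequence, given $x\in\rm M_{g,2}$ with $y=\pi(x)\in\rm M_{g,1}$ and $z=\pi(y)\in\rm M_g$, Satake provides a complete curve through $z$, whose preimage under the proper universal-curve map $\rm M_{g,1}\to\rm M_g$ is a complete surface $\Sigma\subset\rm M_{g,1}$ through $y$; Conjecture \ref{conj:mconj2} at $(g,n)=(g,1)$ then lifts $\Sigma$ along $\pi\colon\mathcal M_{g,2}\to\mathcal M_{g,1}$ to a complete surface in $\rm M_{g,2}$ through $x$. The only real subtlety is bookkeeping—tracking which boundary classes survive the restriction from $\overline{\mathcal M}_{g,2}$ to $\mathcal U_{g,1}$, which is precisely what distinguishes the strong and weak versions, and verifying the exceptional-locus hypothesis for the sum $\psi_1+\psi_2$; the substantive content is already packaged in Theorems \ref{thm:seminds}, \ref{thm:conj2prf}, \ref{thm:mapp}, and \ref{thm:mapp2}.
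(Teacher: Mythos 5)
Your proposal is correct and follows essentially the same route as the paper, which proves this theorem simply by citing the combination of \cref{thm:mapp}, \cref{thm:mapp2}, \cref{thm:seminds}, \cref{thm:conj2prf} and \cref{cor:psisemi}; you assemble exactly these ingredients in the intended way, including the triviality of $\bar{s}_1^\ast(\psi_1+\psi_2)$ from \cref{prop:linecal}. Your explicit verification that the exceptional locus of $\psi_1+\psi_2$ lies in the boundary (via big $+$ nef $=$ big and \cref{cor:psisemi}) is a small step the paper leaves implicit, but it is the argument the citation of \cref{cor:psisemi} is meant to supply.
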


\cref{thm:psisemiconcl} is a combination of \cref{thm:mapp}, \cref{thm:mapp2}, \cref{thm:seminds}, \cref{thm:conj2prf} and \cref{cor:psisemi}.

\cref{conj:onlyhope} and \cref{thm:psisemiconcl} is also an advantage of our method. A nontrivial conclusion on the complete subvariety problem would be implied by the semi-ampleness of  a particular line bundle on moduli space of curves (this is a well-studied topic at least if $g=0$, eg.~\cite{BG21, DG22, Fak12, Fe15}. \cite{Ke99} studied this for general $g$).

\begin{rmk}\label{rmk:zaal}
    Let $f: C\to S$ be a family of smooth curves. This curve induces a family $f':C\times_S C\to C$ of smooth curves with a marked point given by the diagonal $\Delta:C\to C\times_S C$. In this case, if we let $v:C\times_S C\to \mathcal{U}_{g,1}$ be the induced map, by \cref{prop:linecal},
    \[ v^\ast\left(\psi_1+\psi_2 \right)=\pi_1^\ast\Omega_{C/S}\otimes \pi_2^\ast\Omega_{C/S}\otimes \mathcal{O}(2\Delta). \]
   Note that this is the line bundle that appeared in \cite{Za99}, where he speculated about its semiampleness in some special cases. Hence, \cref{conj:onlyhope} can be thought of as a generalization of Zaal's (implicit) conjecture. Note that it is easy to see that the semiampleness of $\pi_1^\ast\Omega_{C/S}\otimes \pi_2^\ast\Omega_{C/S}\otimes \mathcal{O}(2\Delta)$ for all such families also implies the semiampleness of $\psi_1+\psi_2$. In particular, this line bundle is the most important case of this conjecture.
\end{rmk}

The following theorem in \cite{Ke99}, with the previous remark, suggests that \cref{conj:onlyhope} is not entirely hopeless.

\begin{thm}\label{thm:keelsemi}\cite[Theorem 3.0]{Ke99}
    For a proper smooth curve $C$ of genus $g\ge 2$, $\pi_1\Omega_{C/k}^1\otimes \pi_2\Omega_{C/k}^1\otimes \mathcal{O}(2\Delta)$ is semiample on $C\times C$.
\end{thm}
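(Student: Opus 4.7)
My plan is to establish that the line bundle $L := \pi_1^*\Omega_{C/k}^1 \otimes \pi_2^*\Omega_{C/k}^1 \otimes \mathcal{O}(2\Delta)$ on the smooth projective surface $X := C\times C$ is nef and big, that its exceptional locus is exactly the diagonal $\Delta$, and that $L|_\Delta$ is trivial; semiampleness of $L$ will then follow from Keel's general criterion in positive characteristic and from a surface-specific criterion in characteristic zero.

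First, I would carry out the numerical setup. Using $\pi_i|_\Delta = \mathrm{id}_C$ and the adjunction $\mathcal{O}_X(\Delta)|_\Delta \cong T_C$, one finds $L|_\Delta \cong \Omega_C^{\otimes 2}\otimes T_C^{\otimes 2} \cong \mathcal{O}_\Delta$, so in particular $L\cdot \Delta = 0$. For a vertical fiber $F$ of either projection, a direct computation gives $L\cdot F = (2g-2) + 2 = 2g > 0$, and expanding the self-intersection yields $L^2 = 4g(2g-2) > 0$. For any irreducible curve $D \neq \Delta$, each summand $\pi_i^*\Omega_C\cdot D$ is non-negative (since $\Omega_C$ is ample on $C$ for $g\geq 2$), and $\Delta\cdot D \geq 0$ by proper intersection, so $L\cdot D \geq 0$. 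Thus $L$ is nef and big, and the exceptional locus $E(L)$ consists of exactly the single irreducible curve $\Delta$.

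Second, I would deduce semiampleness from the triviality of $L|_\Delta$. Since $\mathcal{O}_\Delta$ is tautologically semiample, Keel's main theorem (Theorem $0.2$ of \cite{Ke99}), which characterizes semiampleness of nef line bundles on projective schemes in terms of semiampleness on the exceptional locus, immediately gives the conclusion in characteristic $p > 0$. In characteristic zero, I plan to invoke the surface-specific semiampleness criterion (due to Zariski, and extended by Artin and Sakai): a nef and big line bundle on a smooth projective surface whose restriction to the exceptional locus is numerically trivial and whose exceptional locus carries a negative-definite intersection form is semiample. Both hypotheses are met here, as $\Delta$ is a single smooth irreducible curve with $\Delta^2 = 2-2g < 0$ and $L|_\Delta$ is trivial.

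The principal obstacle is the characteristic zero case: Keel's Frobenius-based reduction is unavailable, and a direct appeal to the Kawamata--Shokurov base-point-free theorem fails because $L - (K_X + c\Delta) = (2-c)\Delta$ is not nef for any $c \leq 1$. If the surface criterion above is not available in precisely the needed form, my fallback would be to contract $\Delta$ using Artin's theorem (negative-definite intersection form) to obtain a proper birational morphism $X \to Y$ to a normal projective algebraic space, and then verify projectivity of $Y$ directly from the fact that $L$ is ample on $X\setminus \Delta$ together with the triviality of $L|_\Delta$; descending $L$ yields an ample line bundle on $Y$, making $L$ semiample on $X$. In either approach, the decisive input is the triviality of $L|_\Delta$, which in turn rests on the elegant cancellation $\Omega_C^{\otimes 2}\otimes T_C^{\otimes 2} \cong \mathcal{O}_\Delta$.
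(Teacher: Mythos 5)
Your numerical groundwork is correct ($L|_{\Delta}\cong\mathcal{O}_{\Delta}$, $L$ nef and big with $L^2=4g(2g-2)$, exceptional locus exactly $\Delta$ with $\Delta^2=2-2g<0$), and in positive characteristic the conclusion does follow at once from Keel's Theorem 0.2, as you say. The gap is the characteristic-zero half. The ``surface criterion'' you invoke --- nef and big, negative-definite exceptional locus, restriction to it (numerically) trivial, hence semiample --- is false, and it fails on this very surface: $\mathcal{L}_1=\pi_1^{*}\Omega^1_{C/k}\otimes\mathcal{O}(\Delta)$ is nef and big, has exceptional locus exactly $\Delta$, and restricts to $\mathcal{O}_{\Delta}$ on $\Delta$ (again because $\mathcal{O}(\Delta)|_{\Delta}\cong T_C$), yet Keel shows it is semiample if and only if the characteristic is positive. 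The obstruction is invisible on $\Delta$ itself: by \cite[Lemma 3.4]{Ke99} the restriction of $\mathcal{L}_1$ to the second-order neighborhood $\Delta_2$ is non-torsion in characteristic zero --- exactly the input used in the proof of \cref{thm:impossible} above. Your fallback via Artin contraction runs into the same wall: by the theorem on formal functions, $L$ descends to a line bundle on the contraction only if it is trivial on the formal completion of $C\times C$ along $\Delta$, i.e.\ on every infinitesimal neighborhood $\Delta_m$. Triviality on $\Delta$, or even on $\Delta_2$ (which does hold for $L=\mathcal{L}_1\otimes\mathcal{L}_2$ by the cancellation in \cite[Lemma 3.5]{Ke99}), does not give this, and you offer no control of the higher $\Delta_m$. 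This is not a technicality: it is precisely the point where characteristic zero differs from characteristic $p$, and the theme of \cref{sec:char0}.

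So a genuinely different, global input is needed in characteristic zero, and this is how the cited proof actually proceeds: Keel identifies $\pi_1^{*}\Omega^1_{C/k}\otimes\pi_2^{*}\Omega^1_{C/k}\otimes\mathcal{O}(2\Delta)$ with the pullback of an ample line bundle on the Jacobian under the difference map $C\times C\to\operatorname{Jac}(C)$, $(x,y)\mapsto\mathcal{O}(x-y)$ (numerically $L\equiv\phi^{*}(2\Theta)$, and with a suitable symmetric choice of theta divisor this becomes an isomorphism of line bundles). Pullbacks of ample bundles along morphisms to projective varieties are semiample in every characteristic, and the associated map contracts exactly $\Delta$. As written, your argument establishes only the positive-characteristic case, which is the part of the theorem this paper does not need beyond \cref{thm:omegasemi}.
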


We note that \cref{thm:keelsemi} holds even if the base field is of characteristic zero.

We will end this paper with a piece of good news. Even though the characteristic $0$ case of \cref{conj:mconj} and \cref{conj:mconj2} seems very hard, by using Kapranov's construction \cite{Ka93}, we can prove \cref{conj:mconj2} in any characteristic when $g=0$.

\begin{thm}\label{thm:kapapp}
    \cref{conj:mconj2} holds if $g=0$ regardless of the characteristic of the base field. In fact, $\psi_{n+1}$ is a universal DNS line bundle in this case.
\end{thm}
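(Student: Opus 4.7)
The plan is to verify the hypotheses of \cref{thm:conj2prf} for the line bundle $\mathcal{L}' = \psi_{n+1}$ on $\overline{\mathcal{U}}_{0,n} \simeq \overline{\mathcal{M}}_{0,n+1}$. Concretely, I will check that (i) $\psi_{n+1}$ is semiample, (ii) $\bar{s}_i^*\psi_{n+1}$ is trivial for every $i$, and (iii) the exceptional locus of $\psi_{n+1}$ is contained in the boundary $\partial \mathcal{M}_{0,n}$. Once these are in hand, \cref{thm:seminds} (2) promotes $\psi_{n+1}$ to a universal DNS line bundle, and \cref{thm:conj2prf} immediately yields \cref{conj:mconj2} in this case, in arbitrary characteristic.

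Condition (ii) is free: it is precisely the content of \cref{prop:linecal} (2)(d). Conditions (i) and (iii) will be extracted from Kapranov's description of $\overline{M}_{0,n+1}$ in \cite{Ka93}. Fixing $n$ points in general linear position in $\mathbb{P}^{n-2}$, Kapranov realizes $\overline{M}_{0,n+1}$ as an iterated blow-up of $\mathbb{P}^{n-2}$ along (proper transforms of) the linear subspaces spanned by subsets of these points, and shows that the resulting birational morphism $f_n:\overline{M}_{0,n+1}\to \mathbb{P}^{n-2}$ is given by the complete linear system of $\psi_{n+1}$. In particular $\psi_{n+1} = f_n^*\mathcal{O}(1)$ (up to a positive multiple, which is harmless), so $\psi_{n+1}$ is base-point free, hence semiample. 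The exceptional locus of $\psi_{n+1}$ coincides with the union of the positive-dimensional fibers of $f_n$, i.e. the exceptional divisors of the iterated blow-up; since every center blown up in Kapranov's construction corresponds to a boundary stratum of $\overline{M}_{0,n+1}$, this union is contained in $\partial \mathcal{M}_{0,n}$, as required.

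A small bookkeeping point: for $g=0$ and $n+1 \ge 3$, every stable $(n+1)$-pointed rational curve has trivial automorphism group, so the stack $\overline{\mathcal{M}}_{0,n+1}$ coincides with its coarse moduli space $\overline{M}_{0,n+1}$, and Kapranov's statements at the level of varieties transfer to the stack verbatim. Moreover, the case $g=0$ has $2g-2+n>0$ only for $n\ge 3$, so we are always in the regime $n+1 \ge 4$ where Kapranov's picture is non-vacuous.

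The main thing that needs care is the validity of Kapranov's construction in arbitrary characteristic, since \cite{Ka93} is written over $\mathbb{C}$. Fortunately the construction is purely algebro-geometric---either as a Chow/GIT quotient of an open subset of a Grassmannian, or as the explicit sequence of blow-ups with combinatorially prescribed centers described above---and both descriptions are defined over $\mathbb{Z}$ and are preserved under base change to any algebraically closed field. This is the only step where the characteristic of $K$ could in principle intervene, and it causes no difficulty. With (i)--(iii) established, \cref{thm:seminds} (2) gives that $\psi_{n+1}$ is a universal DNS line bundle on $\mathcal{U}_{0,n}$, and \cref{thm:conj2prf} completes the proof of \cref{conj:mconj2} for $g=0$.
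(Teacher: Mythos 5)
Your proof is correct, but it runs through a genuinely different pipeline than the paper's. You verify the hypotheses of \cref{thm:seminds} (2) and \cref{thm:conj2prf} for $\mathcal{L}'=\psi_{n+1}$ on $\overline{\mathcal{M}}_{0,n+1}$: triviality of $\bar{s}_i^\ast\psi_{n+1}$ is indeed \cref{prop:linecal} (2), the containment of the exceptional locus in the boundary is even available off the shelf from \cref{cor:psisemi} (1) (which is characteristic-free), and the only genuinely new input is semiampleness of $\psi_{n+1}$ in characteristic zero, which you extract from the absolute Kapranov map $\overline{\rm{M}}_{0,n+1}\to\mathbb{P}^{n-2}$ realizing $\psi_{n+1}$ as the pullback of $\mathcal{O}(1)$; your remarks on the triviality of the stack structure and on the construction being defined over $\mathbb{Z}$ address the only points where this could go wrong. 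The paper instead bypasses the general machinery entirely and verifies \cref{prop:equiv2} (3) by hand: it uses the \emph{relative} form of Kapranov's construction, embedding a given smooth family $C\to S$ into $\mathbb{P}\left(f_\ast\left(\Omega^1_{C/S}(S_1+\cdots+S_n)\right)\right)$, observing via \cite[Lemma 1.4]{Ka93} that the sections are fiberwise in general position so the bundle trivializes, and then cutting with a hyperplane avoiding the images of the sections. The trade-off: the paper's argument is more elementary and self-contained (no compactified universal curve, no Stein factorization, no exceptional-locus bookkeeping), while yours slots $g=0$ directly into the universal DNS framework and makes the second assertion of the theorem (that $\psi_{n+1}$ is a universal DNS line bundle) an immediate byproduct rather than something read off from the construction. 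Both are valid; only a cosmetic note that $\psi_{n+1}=f_n^\ast\mathcal{O}(1)$ holds on the nose, and that the exceptional locus of the iterated blow-up is the union of the exceptional \emph{divisors} (which are boundary divisors), the blown-up centers themselves living downstairs in $\mathbb{P}^{n-2}$.
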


\begin{proof}
    We will prove \cref{prop:equiv2} (3). Let $f:C\to S$ be a family of smooth curves of genus $0$ with $n$ marked points $s_1,\cdots, s_n:S\to C$. By \cite[proof of Theorem 0.1]{Ka93}, $C$ embedds into to $\mathbb{P}^{n-2}$ bundle $\mathbb{P}\left(f_\ast \left(\Omega_{C/S}^1(S_1+\cdots+S_n) \right)  \right)$ over $S$. Let $i$ be this embedding. By \cite[Lemma 1.4]{Ka93}, in each fiber, the sections $s_1,\cdots, s_n$ are in general position, hence this projective bundle is trivial. Fix a trivialization $\mathbb{P}\left(f_\ast \left(\Omega_{C/S}^1(S_1+\cdots+S_n) \right)\right)\simeq \mathbb{P}^{n-2}_{S}$ such that $s_1,\cdots, s_n$ maps to $[1:0:\cdots:0],\cdots, [0:\cdots:0:1], [1:1:\cdots:1]$ and let $\rho:\mathbb{P}^{n-2}_{S}\to \mathbb{P}^{n-2}$ the projection. For any closed point $y\in C$ not contained in $S_1,\cdots, S_n$, choose a hyperplane section $H$ that contains $\rho(i(y))$ but does not contain any of $[1:0:\cdots:0],\cdots, [0:\cdots:0:1], [1:1:\cdots:1]$. Then the intersection of $C$ and $\rho^{-1}(H)$ is a codimension $1$ closed subscheme $S'$ of $C$ which contains $y$ and does not intersects with $S_1,\cdots, S_n$. Then by \cref{prop:equiv}, this gives a lifting as in the diagram of \cref{conj:mconj2}, and $x\in f'(S')$ by construction. Hence \cref{conj:mconj2} holds.
\end{proof}

\printbibliography

\end{document}